\documentclass{article}

\usepackage{amsmath,amssymb,amsthm,amsfonts}
\usepackage{mathtools}

\usepackage[a4paper,left=2.5cm,right=2.5cm,top=2.5cm,bottom=2.5cm]{geometry}
\usepackage{titlesec}
\usepackage{fancyhdr}
\usepackage{setspace}
\usepackage{indentfirst}
\usepackage{setspace}
\allowdisplaybreaks

\usepackage{graphicx}
\usepackage{float}
\usepackage{subcaption}
\usepackage{booktabs}
\usepackage{array}
\usepackage{algorithm}
\usepackage{algpseudocode}
\usepackage{tabularx}
\usepackage[inline]{enumitem}
\usepackage{enumitem}
\usepackage{cite}
\usepackage{hyperref}
\usepackage{cleveref}

\usepackage{color}
\usepackage{tocbibind}
\titleformat{\section}{\raggedright\Large\bfseries}{\thesection}{1em}{}
\usepackage{xcolor} 
\usepackage{hyperref} 
\hypersetup{
    colorlinks=true,
    linkcolor=green,
    filecolor=magenta,      
    urlcolor=cyan,
    citecolor=blue,
    pdftitle={Your Document Title},
    pdfpagemode=FullScreen,
    }

\usepackage{authblk}

\usepackage{amsthm}
\theoremstyle{definition}

\newtheorem{theorem}{Theorem}[section] 

\newtheorem{lemma}{Lemma}[section]      

\newtheorem{proposition}{Proposition}[section] 
 
\theoremstyle{definition}
\newtheorem{definition}{Definition}[section] 
  
\theoremstyle{definition}
\newtheorem{remark}{Remark}[section]

\newcommand{\Hilbert}{\mathcal{H}}  
\newcommand{\R}{\mathbb{R}}

\title{Accelerated Convergence of a Second-Order Dynamical System and its Application to Splitting Algorithms for Comonotone Inclusions}

\author[1]{Yan Tang\thanks{The corresponding author. E-mail: tangyan@ctbu.edu.cn}}
\author[1]{Jun Dong}

\affil[1]{School of Mathematics and Statistics, Chongqing Technology and Business University, Chongqing 400067, People's Republic of China}
\date{}
\begin{document}
\maketitle

\begin{abstract}
This paper introduces a novel second-order dynamical system driven by a forward-backward splitting operator for solving the structured inclusion $0\in(A+B)(x)$ in a real Hilbert space, where $A$ is a maximal $\rho$-comonotone operator and $B$ is a $\nu$-cocoercive operator. The well-posedness of the system is established, and Lyapunov analysis yields accelerated convergence rates of order $o(\frac{1}{t})$ for the velocity and $o(\frac{1}{t^2})$ for the forward-backward residual, together with weak convergence of the trajectories to $\operatorname{zer}(A+B)$. Temporal discretization further leads to a class of double inertial Halpern forward-backward splitting algorithms that encompasses the classical forward-backward splitting method and its inertial variants as special cases. Numerical experiments on split feasibility, sparse signal recovery, and image deblurring illustrate the effectiveness of the proposed algorithm.
\end{abstract}

\begin{keywords}
structured inclusion; $\rho$-comonotone operator; Lyapunov analysis; splitting algorithm.
\end{keywords}

\noindent \textbf{Mathematics Subject Classification (2020):} 37N40, 46N10, 65K05, 65K10, 90B50, 90C25.

\section{Introduction}
Let $\mathcal{H}$ be a real Hilbert space with inner product $\langle\cdot,\cdot\rangle$ and induced norm $\|\cdot\|$.
We consider the structured inclusion problem of finding $x\in\mathcal{H}$ such that
\begin{equation}\label{eq:intro_inclusion}
0\in(A+B)(x),
\end{equation}
where $A:\mathcal{H}\rightrightarrows\mathcal{H}$ is a set-valued operator, $B:\mathcal{H}\to\mathcal{H}$ is a single-valued operator and the solution set $\operatorname{zer}(A+B) = \{x \in \mathcal{H} : 0 \in (A+B)(x)\}$ is nonempty. This formulation constitutes a central problem of foundational significance, with wide-ranging applications in sparse signal recovery, image processing, and machine learning \cite{Combettes, Duchi, Raguet}.

In recent years, the continuous-time dynamics optimization approach has attracted considerable interest in academic fields and has been applied to several important special cases of the structured inclusion~\eqref{eq:intro_inclusion}~\cite{Bruck1975, attouch2012second, attouch2014dynamical}. As is well known, when $A=\partial f$ is the subdifferential of a proper, convex, and lower semicontinuous function $f$ and $B=\nabla g$ is the gradient of a Fr\'{e}chet differentiable function $g$, not necessarily convex, whose gradient is Lipschitz continuous, the structured inclusion~\eqref{eq:intro_inclusion} reduces to the monotone inclusion $0\in(\partial f+\nabla g)(x)$, which is equivalent to the structured convex minimization problem $\min_{x\in\mathcal{H}}\{f(x)+g(x)\}$. To solve this problem from a dynamic perspective, Bruck~\cite{Bruck1975} introduced the first-order differential inclusion for almost everywhere $t>0$:
\begin{equation}\label{eq:Bruck}
\dot{x}(t)+\partial f(x(t))+\nabla g(x(t))\ni0,
\end{equation}
and proved that each trajectory converges weakly to an element of $\operatorname{argmin}(f+g)$ when $\operatorname{argmin}(f+g)$ is nonempty. The classical forward-backward algorithm can be viewed as the discrete counterpart of system~\eqref{eq:Bruck}, obtained via an implicit discretization of $\partial f$ and an explicit discretization of $\nabla g$.
Furthermore, Attouch, Maing\'{e}, and Redont~\cite{attouch2012second} studied a first-order differential system with two potentials for a.e.\ $t>0$:
\begin{equation}\label{eq:AMR_first}
\left\{\begin{array}{l}
\dot{x}(t)+\partial f(x(t))+a x(t)+b y(t)\ni0,\\[2pt]
\dot{y}(t)-\nabla g(x(t))+a x(t)+b y(t)=0,
\end{array}\right.
\end{equation}
where $y$ is an auxiliary variable and $a,b>0$. By Moreau-Yosida regularization, they proved that system~\eqref{eq:AMR_first} admits a unique global solution, which is the uniform limit on compact intervals of the trajectories generated by the Moreau-Yosida regularization, and that the velocities of the regularized approximations converge weakly in $L^{2}$ to the velocity of the true solution. Relying on this dynamical framework, Attouch, Peypouquet, and Redont~\cite{attouch2014dynamical} derived, via time discretization, a new class of inertial forward-backward splitting algorithms that relax the classical step-size restriction and encompass the gradient-projection method as a special case.

Another application of the dynamical optimization method concerns the structured inclusion problem~\eqref{eq:intro_inclusion} in the case where $A=\nabla f$ is the gradient of a convex differentiable function and $B$ is a cocoercive operator. Specifically, Attouch and Maing\'{e}~\cite{attouch2011asymptotic} introduced the second-order system:
\begin{equation*}\label{eq:AM2011}
\ddot{x}(t)+u\dot{x}(t)+\nabla f(x(t))+B(x(t))=0,
\end{equation*}
where $\nabla f$ is the gradient of a convex, continuously differentiable function and $B$ is a $\lambda$-cocoercive operator. Under the condition $\lambda u^{2}>1$, which involves only the nonpotential operator and the damping parameter, they proved that each trajectory converges weakly to an element of $(\nabla f+B)^{-1}(0)$ and the velocity asymptotically vanishes as $t\to+\infty$.
Drawing on the capacity of Hessian-driven damping to suppress oscillations, Adly, Attouch, and Vo~\cite{adly2021asymptotic} studied a Newton-like inertial dynamics with explicit geometric damping terms:
\begin{equation}\label{eq:AAV2021}
\ddot{x}(t)+u\dot{x}(t)+\nabla f(x(t))+B(x(t))+\beta_{f}\nabla^{2}f(x(t))\dot{x}(t)+\beta_{b}B'(x(t))\dot{x}(t)=0,
\end{equation}
where $B$ is a $\lambda$-cocoercive operator and $u>0$, $\beta_{f}>0$, $\beta_{b}>0$ are damping parameters. The terms $\nabla^{2}f(x(t))\dot{x}(t)$ and $B'(x(t))\dot{x}(t)$ are interpreted respectively as $\frac{d}{dt}(\nabla f(x(t)))$ and $\frac{d}{dt}(B(x(t)))$ in the sense of distributions. Under suitable conditions, they established the well-posedness of this system and proved the weak convergence of the generated trajectories to $\operatorname{zer}(\nabla f+B)$. Moreover, by rewriting this system as an equivalent first-order formulation in time and space, the convergence analysis was further extended to nonsmooth convex potentials.
To naturally yield inertial proximal-gradient splitting algorithms by temporal discretization, they subsequently developed a variant system~\cite{adly2023convergence}:
\begin{equation*}
\label{eq:AAV2023}
\ddot{x}(t)+u\dot{x}(t)+\nabla f\bigl(x(t)+\beta_{f}\dot{x}(t)\bigr)+B\bigl(x(t)+\beta_{b}\dot{x}(t)\bigr)=0,
\end{equation*}
whose counterpart~\eqref{eq:AAV2021} is recovered by a first-order Taylor expansion of the shifted arguments.

A common feature of all the works above is that the set-valued operator $A$ is required to be monotone.
In several relevant settings, however, this requirement is not met. Non-convex composite minimization problems, in which the regularizer is merely weakly convex, give rise to hypomonotone operators that need not be monotone. Applications such as the method of multipliers for nonlinear programming~\cite{combettes2004} and smooth nonconvex-nonconcave minimax problems~\cite{lee2021} give rise to co-hypomonotone operators that need not be monotone. To accommodate such problems while retaining the dynamical systems approach, one needs a notion that relaxes monotonicity in a controlled and principled manner. The concept of $\rho$-comonotonicity, systematically developed by Bauschke, Moursi, and Wang~\cite{bauschke2021generalized}, serves precisely this purpose. Within a single axiomatic scheme, it unifies cocoercivity ($\rho>0$), classical monotonicity ($\rho=0$), and co-hypomonotonicity ($\rho<0$), thereby encompassing the operator classes that arise in the descriptions above.
A crucial structural property is that averaged operators can be characterized as resolvents of comonotone operators under suitable scaling transformations, thereby extending the classical correspondence between maximally monotone operators and firmly nonexpansive mappings to broader frameworks for non-monotone inclusion problems.

While the works described above have significantly advanced the theory of continuous-time inertial dynamics and their algorithmic counterparts, to the best of our knowledge, no prior study has considered a second-order dynamical system whose driving term is the forward-backward splitting operator associated with a maximal $\rho$-comonotone operator and a cocoercive operator from a dynamical perspective. To fill this gap and obtain a class of double inertial Halpern forward-backward splitting algorithms, we propose the following second-order dynamical system for the structured inclusion~\eqref{eq:intro_inclusion}, in which $A$ is assumed to be maximal $\rho$-comonotone and $B$ is $\nu$-cocoercive with $\nu>0$:
\begin{equation*}
\ddot{x}(t)+\alpha(t)\dot{x}(t)+\beta(t)\frac{d}{dt}T_{\lambda(t),\gamma(t)}(x(t))+b(t)T_{\lambda(t),\gamma(t)}(x(t))=0,
\end{equation*}
where $J_{\gamma A}:=(\operatorname{Id}+\gamma A)^{-1}$ denotes the resolvent of $A$, $T_{\lambda,\gamma}:=\frac{1}{\lambda}\bigl(\operatorname{Id}-J_{\gamma A}\circ(\operatorname{Id}-\gamma B)\bigr)$ is the associated forward-backward splitting operator, and $\gamma(t)>0$, $\lambda(t)>0$ are time-dependent parameters for all $t\geq t_{0}$. The time-dependent coefficients $\alpha(t),\beta(t),b(t)$ are expressed through a set of auxiliary parameters whose algebraic relations are specified in the sequel.

The remainder of the paper is organized as follows. Section~\ref{sec:prelim} reviews some notions and basic results used throughout the paper. Section~\ref{sec:existence} proposes a second-order dynamical system for the inclusion problem~\eqref{eq:intro_inclusion} and establishes the existence and uniqueness of global solutions to its Cauchy problem. Section~\ref{sec:asymptotic} is devoted to the asymptotic analysis of the proposed system, establishing accelerated convergence rates and proving the weak convergence of the trajectories. Section~\ref{sec:algorithm} derives, via temporal discretization of the continuous dynamics, a class of double inertial Halpern forward-backward splitting algorithms that encompasses the classical forward-backward splitting algorithm~\cite{Combettes}, the inertial forward-backward splitting algorithm~\cite{lorenz2015inertial}, and the inertial Halpern forward-backward splitting algorithm~\cite{cholamjiak2018inertial}, and validates the effectiveness of the proposed algorithms through several numerical experiments. Section~\ref{sec:conclusion} provides concluding remarks and outlines future research directions.

\section{Preliminaries}
\label{sec:prelim}
Throughout this paper, let $\mathcal{H}$ be a real Hilbert space and $\mathcal{H} \times \mathcal{H}$ be the product space endowed with the inner product $\langle (x, z), (x', z') \rangle := \langle x, x' \rangle_{\mathcal{H}} + \langle z, z' \rangle_{\mathcal{H}}, \forall (x, z), (x', z') \in \mathcal{H} \times \mathcal{H}$. 
It follows that $\mathcal{H} \times \mathcal{H}$ is also a Hilbert space with the induced norm $\|(x, z)\|_{\mathcal{H} \times \mathcal{H}} = \sqrt{\|x\|_{\mathcal{H}}^2 + \|z\|_{\mathcal{H}}^2}$ for all $(x, z) \in \mathcal{H} \times \mathcal{H}$.
\vskip 1mm

Let $A:\mathcal{H} \rightrightarrows \mathcal{H}$ be a set-valued operator. The domain of $A$ is defined as $\mathrm{dom} A := \{x \in \mathcal{H} : Ax \neq \emptyset\}$, its graph as $\mathrm{gra} A := \{(x, u) \in \mathcal{H} \times \mathcal{H} : u \in Ax\}$, and its set of zeros as $\operatorname{zer} A := \{x \in \mathcal{H} : 0 \in Ax\}$. For any parameter $\gamma > 0$, the resolvent of $A$ with index $\gamma$ is defined as $J_{\gamma A} := (\operatorname{Id} + \gamma A)^{-1}$, where $\operatorname{Id}$ denotes the identity operator on $\mathcal{H}$. The Yosida approximation of $A$ is given by $A_\gamma := \frac{1}{\gamma} (\operatorname{Id} - J_{\gamma A})$. 

\begin{definition}
{\rm (See \cite[Definition 2.4]{bauschke2021generalized})}
\label{def_1}
Let $A: \Hilbert \to 2^\Hilbert$ be a set-valued operator and $\rho \in \R$. Then we have the following operator:
\begin{enumerate}
    \item[(1)] $A$ is $\rho$-monotone if
    $
    \langle x-y, u-v \rangle \geq \rho \|x-y\|^2, \,\,\,\forall (x,u),   (y,v) \in \text{gra } A.
   $
    \item[(2)] $A$ is maximal $\rho$-monotone if $A$ is $\rho$-monotone and there is no $\rho$-monotone operator  $V: \Hilbert \to 2^\Hilbert$  such that $\text{gra } V$ properly contains $\text{gra } A$, that is, for all $(x,u) \in \Hilbert \times  2^\Hilbert$,
    $
    (x,u) \in \text{gra } A\, \Longleftrightarrow\, \langle x-y, u-v \rangle \geq \rho \|x-y\|^2,\,\,\,\forall (y,v) \in \text{gra } A.
    $
    \item[(3)] $A$ is $\rho$-comonotone if
    $
    \langle x-y, u-v \rangle \geq \rho \|u-v\|^2, \,\,\,\forall (x,u),  (y,v) \in \text{gra } A.
    $
    \item[(4)] $A$ is maximal $\rho$-comonotone if $A$ is $\rho$-comonotone and there is no $\rho$-comonotone operator $V: \Hilbert\to  2^\Hilbert$ such that $\text{gra } V$ properly contains $\text{gra } A$, that is, for all $(x,u) \in \Hilbert \times  2^\Hilbert$,
    $
    (x,u) \in \text{gra } A\, \Longleftrightarrow\,  \langle x-y, u-v \rangle \geq \rho \|u-v\|^2,\,\,\,\forall (y,v) \in \text{gra } A.
   $
\end{enumerate}
\end{definition}

\begin{remark} (See \cite[Remark 2.5]{bauschke2021generalized})
\label{rem:comonotone}  We have the properties of the opertaor $A$ given in Definition \ref{def_1}:
\leavevmode
\begin{enumerate} 
\item[(1)] When $\rho = 0$, both $\rho$-monotonicity of $A$ and $\rho$-comonotonicity of $A$ reduce to monotonicity of $A$.
\item[(2)] When $\rho < 0$, $\rho$-monotonicity is known as $|\rho|$-hypomonotonicity, see [\cite{rockafellar2009variational}, Example 12.28] and [\cite{burachik2008}, Definition 6.9.1]. In this case, the $\rho$-comonotonicity is also known as $|\rho|$-co-hypomonotonicity (see [\cite{combettes2004}, Definition 2.2]).
\item[(3)] In passing, we point out that when $\rho > 0$, $\rho$-monotonicity of $A$ reduces to $\rho$-strong monotonicity of $A$, while $\rho$-comonotonicity of $A$ reduces to $\rho$-cocoercivity of $A$.
\end{enumerate}
\end{remark}

As demonstrated in \cite[Example 1]{lee2021} and \cite[Example 3]{pethick2023}, the $\rho$-comonotonicity with $\rho < 0$ is strictly weaker than the monotonicity.

\begin{definition} [\cite{bauschke2017convex}]
Let $\mathcal{H}$ be a real Hilbert space.  Recall the following: 
\begin{enumerate}[label=(\roman*)]
    \item[(1)] An operator $A: \mathcal{H} \rightarrow \mathcal{H}$ is said to be $L$-Lipschitz continuous for some $L>0$ if
    $
    \| A x - A y \| \leq L \| x - y \|,\,\,\,  \forall x, y \in \mathcal{H}.
    $
    \item[\rm (2)] An operator $A: \mathcal{H} \rightarrow \mathcal{H}$ is said to be $\alpha$-averaged for some $\alpha \in \left( 0, 1 \right)$ if there exists a nonexpansive operator $R: \mathcal{H} \to \mathcal{H}$ such that
    $
    A = (1 - \alpha)\operatorname{Id} + \alpha R.
    $
    \item[(3)] An operator $A: \mathcal{H} \rightarrow \mathcal{H}$ is said to be nonexpansive if
    $
    \| A x - A y \|_{\mathcal{H}} \leq \| x - y \|_{\mathcal{H}},\,\,\,  \forall x, y \in \mathcal{H}.
    $
    \item[(4)] An operator $A: \mathcal{H} \rightarrow \mathcal{H}$ is said to be $\nu$-cocoercive for some $\nu>0$ if
    $
    \langle A x - A y, x - y \rangle \geq \nu \| A x - A y \|^2,\,\,\,  \forall x, y \in \mathcal{H}.
    $
\end{enumerate}
\end{definition}

If an operator $A: \mathcal{H} \rightarrow \mathcal{H}$ is $\nu$-cocoercive for some $\nu > 0$, then $A$ is $\frac{1}{\nu}$-Lipschitz continuous and monotone. Additionally, if $A$ is $\alpha$-averaged for some $\alpha \in \left( 0, 1 \right)$, then $A$ is nonexpansive.

\begin{proposition} [\cite{bauschke2017convex}] \label{prop:cocoercive_averaged}
Let $D$ be a nonempty subset of $\mathcal{H}$ and $B: D \to \mathcal{H}$ be a single-valued operator. For any $\nu \in \mathbb{R}_{++}$ and parameter $\gamma \in \left( 0, 2\nu \right)$, the operator $B$ is $\nu$-cocoercive if and only if $\operatorname{Id} - \gamma B$ is $\frac{\gamma}{2\nu}$-averaged.
\end{proposition}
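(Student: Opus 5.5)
The plan is to prove both implications by expanding $\|(\operatorname{Id}-\gamma B)x-(\operatorname{Id}-\gamma B)y\|^2$ and comparing it with the standard characterization of averagedness. That characterization says an operator $T:D\to\mathcal{H}$ is $\alpha$-averaged, $\alpha\in(0,1)$, if and only if
\begin{equation*}
\|Tx-Ty\|^2+\frac{1-\alpha}{\alpha}\|(\operatorname{Id}-T)x-(\operatorname{Id}-T)y\|^2\le\|x-y\|^2,\quad\forall x,y\in D.
\end{equation*}
This follows directly from the definition. Write $T=(1-\alpha)\operatorname{Id}+\alpha R$, so that $R=\frac{1}{\alpha}T-\frac{1-\alpha}{\alpha}\operatorname{Id}$. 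Then use the identity
\begin{equation*}
\|(1-\alpha)u+\alpha v\|^2=(1-\alpha)\|u\|^2+\alpha\|v\|^2-\alpha(1-\alpha)\|u-v\|^2
\end{equation*}
to see that nonexpansiveness of $R$ is equivalent to the displayed inequality. I would state this equivalence first as an auxiliary step. Since the paper defines averagedness only for operators on all of $\mathcal{H}$, I would read the definition on $D$ verbatim, with $R:D\to\mathcal{H}$ nonexpansive.

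Next, set $T=\operatorname{Id}-\gamma B$ and $\alpha=\frac{\gamma}{2\nu}\in(0,1)$, which is where $\gamma\in(0,2\nu)$ is used. Then $\operatorname{Id}-T=\gamma B$ and $\frac{1-\alpha}{\alpha}=\frac{2\nu-\gamma}{\gamma}$. Fix $x,y\in D$ and write $d=x-y$ and $e=Bx-By$. Expanding gives
\begin{equation*}
\|Tx-Ty\|^2=\|d\|^2-2\gamma\langle d,e\rangle+\gamma^2\|e\|^2,
\end{equation*}
and the second term in the characterization is $\frac{2\nu-\gamma}{\gamma}\gamma^2\|e\|^2=(2\nu\gamma-\gamma^2)\|e\|^2$. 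Adding these, the averagedness inequality becomes
\begin{equation*}
\|d\|^2-2\gamma\langle d,e\rangle+2\nu\gamma\|e\|^2\le\|d\|^2 .
\end{equation*}
Since $\gamma>0$, this is equivalent to $\langle d,e\rangle\ge\nu\|e\|^2$, which is exactly $\nu$-cocoercivity.

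Every step in the chain is an equivalence, so both directions follow at once. The only real content, and the place I would be most careful, is the auxiliary characterization of $\alpha$-averaged operators. In particular, one must check that the nonexpansive $R$ is forced to be $\frac{1}{\alpha}T-\frac{1-\alpha}{\alpha}\operatorname{Id}$, so that the existence claim in the definition reduces to a pointwise inequality. The rest is a routine expansion of squares.
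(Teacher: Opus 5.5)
Your argument is correct. The paper gives no proof of its own and simply cites Bauschke--Combettes. The standard argument there writes $\operatorname{Id}-\gamma B=(1-\frac{\gamma}{2\nu})\operatorname{Id}+\frac{\gamma}{2\nu}(\operatorname{Id}-2\nu B)$ and uses the chain ``$B$ is $\nu$-cocoercive $\iff$ $\nu B$ is firmly nonexpansive $\iff$ $\operatorname{Id}-2\nu B$ is nonexpansive''. Averagedness is then read off from the definition with the explicit choice $R=\operatorname{Id}-2\nu B$, and uniqueness of $R$ (since $\alpha\neq 0$) handles the converse.

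You instead go through the inequality characterization of $\alpha$-averagedness, i.e.\ item (3) of Proposition~\ref{prop:averaged_characterization}, which you derive yourself from the convex-combination identity, and then expand a single square. Your algebra is right: $\frac{1-\alpha}{\alpha}\gamma^2=2\nu\gamma-\gamma^2$, which reduces everything to $\langle d,e\rangle\ge\nu\|e\|^2$. Your self-contained version of the characterization also has a minor advantage: it does not presuppose that $T$ is nonexpansive, as the paper's statement of that proposition does.

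The price is an auxiliary lemma you do not really need. Once you note that $R$ is forced to be $\frac{1}{\alpha}T-\frac{1-\alpha}{\alpha}\operatorname{Id}$, substituting $T=\operatorname{Id}-\gamma B$ and $\alpha=\frac{\gamma}{2\nu}$ gives $R=\operatorname{Id}-2\nu B$. Then $\|d-2\nu e\|^2\le\|d\|^2\iff\langle d,e\rangle\ge\nu\|e\|^2$ settles both directions at once, which is the textbook route.
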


\begin{proposition} [\cite{bauschke2017convex}] \label{prop:composition_averaged}
Let $D$ be a nonempty subset of $\mathcal{H}$ and $T_{1}, T_{2} : D \to D$ be two operators. If $T_{1}$ is $\alpha_{1}$-averaged and $T_{2}$ is $\alpha_{2}$-averaged for some $\alpha_{1}, \alpha_{2} \in \left( 0, 1 \right)$, then the composition $T = T_{1} \circ T_{2}$ is $\alpha$-averaged, where $\alpha = \frac{\alpha_{1} + \alpha_{2} - 2\alpha_{1} \alpha_{2}}{1 - \alpha_{1} \alpha_{2}}\in \left( 0, 1 \right)$.
\end{proposition}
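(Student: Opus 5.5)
The plan is a direct proof from the definition. We write $T_1=(1-\alpha_1)\operatorname{Id}+\alpha_1 R_1$ and $T_2=(1-\alpha_2)\operatorname{Id}+\alpha_2 R_2$ with $R_1,R_2$ nonexpansive. Rather than work with $R_1,R_2$ directly, we use the standard characterization of averagedness: for $\kappa\in(0,1)$, an operator $T$ is $\kappa$-averaged if and only if
\begin{equation*}
\|Tx-Ty\|^2+\frac{1-\kappa}{\kappa}\|(\operatorname{Id}-T)x-(\operatorname{Id}-T)y\|^2\le \|x-y\|^2,\qquad \forall x,y\in D.
\end{equation*}
This follows by expanding $\|Rx-Ry\|^2\le\|x-y\|^2$ with $R=\frac{1}{\kappa}(T-(1-\kappa)\operatorname{Id})$ and using the parallelogram-type identity $\|(1-\kappa)a+\kappa b\|^2=(1-\kappa)\|a\|^2+\kappa\|b\|^2-\kappa(1-\kappa)\|a-b\|^2$. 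I would record this as the first step.

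Fix $x,y\in D$ and set $u=T_2x-T_2y$ and $d_2=(\operatorname{Id}-T_2)x-(\operatorname{Id}-T_2)y$. Then $(\operatorname{Id}-T)x-(\operatorname{Id}-T)y=d_2+d_1$, where $d_1=(\operatorname{Id}-T_1)T_2x-(\operatorname{Id}-T_1)T_2y$. Write $\theta_i=\frac{1-\alpha_i}{\alpha_i}$. Applying the characterization to $T_1$ at the points $T_2x,T_2y$, and then to $T_2$, gives
\begin{equation*}
\|Tx-Ty\|^2\le \|u\|^2-\theta_1\|d_1\|^2\le \|x-y\|^2-\theta_2\|d_2\|^2-\theta_1\|d_1\|^2 .
\end{equation*}
Next we need a lower bound on $\theta_1\|d_1\|^2+\theta_2\|d_2\|^2$ in terms of $\|d_1+d_2\|^2$. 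By the weighted Cauchy--Schwarz (Jensen) inequality,
\begin{equation*}
\|d_1+d_2\|^2\le\Bigl(\frac{1}{\theta_1}+\frac{1}{\theta_2}\Bigr)\bigl(\theta_1\|d_1\|^2+\theta_2\|d_2\|^2\bigr).
\end{equation*}
Hence
\begin{equation*}
\|Tx-Ty\|^2+\frac{\theta_1\theta_2}{\theta_1+\theta_2}\|(\operatorname{Id}-T)x-(\operatorname{Id}-T)y\|^2\le\|x-y\|^2 .
\end{equation*}

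The remaining step is algebraic: we must identify $\frac{\theta_1\theta_2}{\theta_1+\theta_2}$ with $\frac{1-\alpha}{\alpha}$ for the stated $\alpha$. We compute $\theta_1+\theta_2=\frac{\alpha_1+\alpha_2-2\alpha_1\alpha_2}{\alpha_1\alpha_2}$ and $\theta_1\theta_2=\frac{(1-\alpha_1)(1-\alpha_2)}{\alpha_1\alpha_2}$. Their quotient is $\frac{(1-\alpha_1)(1-\alpha_2)}{\alpha_1+\alpha_2-2\alpha_1\alpha_2}$, and $1-\alpha=\frac{(1-\alpha_1)(1-\alpha_2)}{1-\alpha_1\alpha_2}$, so $\frac{1-\alpha}{\alpha}$ is the same quantity, as required.

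We also check that $\alpha\in(0,1)$. The numerator $\alpha_1+\alpha_2-2\alpha_1\alpha_2=\alpha_1(1-\alpha_2)+\alpha_2(1-\alpha_1)$ is positive. It is also smaller than the denominator $1-\alpha_1\alpha_2$, because their difference is $(1-\alpha_1)(1-\alpha_2)>0$. Applying the characterization backward then shows that $T$ is $\alpha$-averaged.

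The main obstacle, though a mild one, is the careful proof of the averagedness characterization and keeping track of the constants. The combination step via weighted Cauchy--Schwarz is where the sharp constant appears, so the weights $\theta_i$ must be chosen exactly as above.
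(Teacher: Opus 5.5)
Your proof is correct. The paper gives no proof of this proposition and simply cites Bauschke--Combettes, and your route is essentially the standard proof in that monograph: the characterization $\|Tx-Ty\|^2+\frac{1-\kappa}{\kappa}\|(\operatorname{Id}-T)x-(\operatorname{Id}-T)y\|^2\le\|x-y\|^2$, chained through $T_2x,T_2y$, with the two residual terms merged by the weighted convexity (Cauchy--Schwarz) bound with constant $\frac{\theta_1\theta_2}{\theta_1+\theta_2}=\frac{1-\alpha}{\alpha}$. All the algebra checks out, including the check that $\alpha\in(0,1)$ via $(1-\alpha_1\alpha_2)-(\alpha_1+\alpha_2-2\alpha_1\alpha_2)=(1-\alpha_1)(1-\alpha_2)>0$.
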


\begin{proposition} [\cite{bauschke2017convex}] \label{prop:averaged_characterization}
Let $D$ be a nonempty subset of $\mathcal{H}$, $T: D \to \mathcal{H}$ be a nonexpansive operator, and $\alpha \in \left( 0, 1 \right)$. The following assertions are equivalent:
\begin{enumerate}
    \item[\rm (1)] $T$ is $\alpha$-averaged;
    \item[\rm (2)] $\left( 1 - 1/\alpha \right)\operatorname{Id} + \left( 1/\alpha \right)T$ is nonexpansive;
    \item[\rm (3)] For all $x, y \in D$, it holds that:
    $
    \| Tx - Ty \|_{\mathcal{H}}^2 \leq \| x - y \|_{\mathcal{H}}^2 - \frac{1 - \alpha}{\alpha} \| (\operatorname{Id} - T)x - (\operatorname{Id} - T)y \|_{\mathcal{H}}^2;
    $
\end{enumerate}
In particular, $T$ is $\alpha$-averaged if and only if $\operatorname{Id} - T$ is $\frac{1}{2\alpha}$-cocoercive.
\end{proposition}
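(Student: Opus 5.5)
The statement to prove is Proposition~\ref{prop:averaged_characterization}: for a nonexpansive $T$ and $\alpha\in(0,1)$, items (1)--(3) are equivalent, and $T$ is $\alpha$-averaged if and only if $\operatorname{Id}-T$ is $\frac{1}{2\alpha}$-cocoercive. The plan is to prove (1)$\Leftrightarrow$(2) directly from the definition, then (2)$\Leftrightarrow$(3) by a single norm identity, and finally read the cocoercivity statement off (3).

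\textbf{(1)$\Leftrightarrow$(2).} Set $R:=(1-1/\alpha)\operatorname{Id}+(1/\alpha)T$. Then $T=(1-\alpha)\operatorname{Id}+\alpha R$ holds identically, and $R$ is the only operator with this property, since the relation can be solved for $R$. Hence $T$ is $\alpha$-averaged exactly when this particular $R$ is nonexpansive.

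\textbf{(2)$\Leftrightarrow$(3).} Fix $x,y\in D$ and write
\[
u:=x-y,\qquad v:=Tx-Ty,\qquad w:=u-v=(\operatorname{Id}-T)x-(\operatorname{Id}-T)y .
\]
Then $Rx-Ry=u-\tfrac1\alpha w$. I will expand both quantities using $\|v\|^2=\|u\|^2-2\langle u,w\rangle+\|w\|^2$:
\[
\|Rx-Ry\|^2-\|u\|^2=-\tfrac{2}{\alpha}\langle u,w\rangle+\tfrac{1}{\alpha^2}\|w\|^2,
\]
\[
\|v\|^2-\|u\|^2+\tfrac{1-\alpha}{\alpha}\|w\|^2=-2\langle u,w\rangle+\tfrac{1}{\alpha}\|w\|^2 .
\]
The first expression is $\tfrac1\alpha$ times the second. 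Since $\alpha>0$, the inequality $\|Rx-Ry\|\le\|x-y\|$ for all $x,y$ is equivalent to the inequality in (3) for all $x,y$.

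\textbf{Cocoercivity.} The inequality in (3) is the same as $-2\langle u,w\rangle+\tfrac1\alpha\|w\|^2\le 0$, that is,
\[
\langle (\operatorname{Id}-T)x-(\operatorname{Id}-T)y,\;x-y\rangle\ \ge\ \tfrac{1}{2\alpha}\,\|(\operatorname{Id}-T)x-(\operatorname{Id}-T)y\|^2 .
\]
This is exactly $\frac{1}{2\alpha}$-cocoercivity of $\operatorname{Id}-T$.

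There is no real obstacle here. The only point needing care is keeping the coefficients straight in the two expansions: the argument rests on the fact that the two residuals differ exactly by the positive factor $1/\alpha$, so it is worth checking that scaling explicitly. The standing hypothesis that $T$ is nonexpansive is never used; it is implied by any of the equivalent conditions.
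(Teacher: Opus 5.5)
Your proof is correct: the identity $\|Rx-Ry\|^2-\|x-y\|^2=\frac{1}{\alpha}\bigl(\|Tx-Ty\|^2-\|x-y\|^2+\frac{1-\alpha}{\alpha}\|w\|^2\bigr)$ holds, and rewriting (3) as $\langle w,x-y\rangle\ge\frac{1}{2\alpha}\|w\|^2$ gives the cocoercivity claim. The paper gives no proof of its own and only cites Bauschke--Combettes, whose standard argument is essentially yours: set $R=(1-1/\alpha)\operatorname{Id}+(1/\alpha)T$ so that $T=(1-\alpha)\operatorname{Id}+\alpha R$, and compare the two quadratic expressions through the same norm identity, obtained there from the convex-combination formula.
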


\begin{definition} {(See \cite[Definition 2.1]{AttouchSvaiter2011})}
\label{def:absolute_continuity}
Given $b \in \mathbb{R}_{+}$, a function $x : [0, b] \to \mathcal{H}$ is said to be {\it absolutely continuous} if one of the following equivalent properties holds:
\begin{enumerate}
    \item[\rm (1)] There exists an integrable function $y : [0, b] \to \mathcal{H}$ such that $x(t) = x(0) + \int_0^t y(s) ds$ for all $t \in [0, b]$;
    \item[\rm (2)] $x$ is continuous and its distributional derivative belongs to the Lebesgue space $L^1([0, b]; \mathcal{H})$;
    \item[\rm (3)] For every $\epsilon > 0$, there exists some $\eta > 0$ such that, for any finite family of intervals $I_k = (a_k, b_k)$,
    \begin{equation*}
    I_k \cap I_j = \emptyset, \,\, \forall i, j \geq 1, \, i \neq j, \quad \sum |b_k - a_k| \leq \eta \implies \sum \|x(b_k) - x(a_k)\|_{\mathcal{H}} \leq \epsilon.
    \end{equation*}
\end{enumerate}
Moreover, an absolutely continuous function is differentiable almost everywhere.
\end{definition}

\begin{proposition} [\cite{Bartz S2022}] \label{prop:resolvent_single_valued}
Let $A: \mathcal{H} \rightrightarrows \mathcal{H}$ be $\rho$-comonotone and let $\gamma > 0$ be such that $\gamma + \rho > 0$. Then the following virtues hold:
\begin{enumerate}
    \item[\rm (1)] $J_{\gamma A}$ is single-valued;
    \item[\rm (2)] $\mathrm{dom} J_{\gamma A} = \mathcal{H}$ if and only if $A$ is maximal $\rho$-comonotone.
\end{enumerate}
\end{proposition}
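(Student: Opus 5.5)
The plan is to reduce both assertions to the classical correspondence between monotone operators and their resolvents, using a rescaling that turns $\rho$-comonotonicity into plain monotonicity. Fix $\gamma>0$ with $\gamma+\rho>0$ and put $\mu:=\gamma+\rho>0$.

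\textbf{Step 1: a key inequality.} First, $p\in J_{\gamma A}x$ if and only if $u:=(x-p)/\gamma\in Ap$. Take $p_i\in J_{\gamma A}x_i$ and set $u_i=(x_i-p_i)/\gamma$, for $i=1,2$. Comonotonicity gives
\begin{equation*}
\langle p_1-p_2,u_1-u_2\rangle\ge\rho\|u_1-u_2\|^2 .
\end{equation*}
Substitute $p_1-p_2=(x_1-x_2)-\gamma(u_1-u_2)$. This yields
\begin{equation*}
\langle x_1-x_2,u_1-u_2\rangle\ge(\gamma+\rho)\|u_1-u_2\|^2 .
\end{equation*}
So the operator $x\mapsto (x-J_{\gamma A}x)/\gamma$ is $\mu$-cocoercive on its graph.

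\textbf{Step 2: single-valuedness, assertion (1).} Take $x_1=x_2=x$ with two possible values $p_1,p_2\in J_{\gamma A}x$. The left-hand side of the key inequality is then $0$. Since $\mu>0$, this forces $u_1=u_2$, and hence $p_1=p_2$.

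\textbf{Step 3: full domain, assertion (2).} I would use the rescaled operator $M:=(A^{-1}+\rho\operatorname{Id})^{-1}$. Here $(p,u)\in\operatorname{gra}A$ if and only if $(p+\rho u,u)\in\operatorname{gra}M$, and $\rho$-comonotonicity of $A$ is exactly monotonicity of $M$. This graph map is a bijection preserving inclusion of graphs, so maximal $\rho$-comonotonicity of $A$ is equivalent to maximal monotonicity of $M$.

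Next I relate the resolvents. We have
\begin{equation*}
A^{-1}=M^{-1}-\rho\operatorname{Id},\qquad (\operatorname{Id}+\gamma A)^{-1}=\operatorname{Id}-\gamma(\gamma A^{-1}+\gamma^{2}\operatorname{Id})^{-1}\circ\gamma^{-1}\ldots
\end{equation*}
Rather than push this formula through, I would argue directly. The relation $x\in\operatorname{ran}(\operatorname{Id}+\gamma A)$ means there is $u\in Ap$ with $x=p+\gamma u$. Writing $q=p+\rho u\in M^{-1}u$, this becomes $x=q+\mu u$ with $u\in Mq$, that is, $x\in\operatorname{ran}(\operatorname{Id}+\mu M)$. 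Therefore $\operatorname{dom}J_{\gamma A}=\operatorname{ran}(\operatorname{Id}+\mu M)$.

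Minty's theorem then says that the monotone operator $M$ is maximal if and only if $\operatorname{ran}(\operatorname{Id}+\mu M)=\mathcal{H}$ for some (equivalently every) $\mu>0$. This gives (2).

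\textbf{Main obstacle.} The delicate step is the change of variables in Step 3. I need to check carefully that the graph transformation preserves maximality in both directions, and that the range identity holds exactly, without gaining or losing points, including in the case $\rho<0$. Everything else is direct algebra from Definition \ref{def_1}.
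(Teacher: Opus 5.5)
The paper gives no proof of this proposition; it only cites Bartz et al. I have therefore checked your argument on its own. Steps 1 and 2 are correct, and the strategy for (2) is the right one: a linear change of variables on the graph turns $\rho$-comonotonicity into monotonicity, and Minty's theorem finishes. However, Step 3 has a sign error, located exactly at the point you flagged as the main obstacle. As written, two of its claims are false.

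With $\operatorname{gra}M=\{(p+\rho u,u):(p,u)\in\operatorname{gra}A\}$ one computes $\langle (p_1+\rho u_1)-(p_2+\rho u_2),u_1-u_2\rangle=\langle p_1-p_2,u_1-u_2\rangle+\rho\|u_1-u_2\|^2$. So monotonicity of your $M$ is equivalent to $(-\rho)$-comonotonicity of $A$, not $\rho$-comonotonicity. Likewise, with $q=p+\rho u$ one gets $x=p+\gamma u=q+(\gamma-\rho)u$, not $q+(\gamma+\rho)u$. For instance, $A=-\operatorname{Id}$ on $\mathbb{R}$ is $(-1)$-comonotone. Your $M=(A^{-1}-\operatorname{Id})^{-1}=-\tfrac12\operatorname{Id}$ is not monotone, so Minty's theorem cannot be applied to it.

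\textbf{The repair.} Use $M:=(A^{-1}-\rho\operatorname{Id})^{-1}$, i.e.\ $\operatorname{gra}M=\{(p-\rho u,u):(p,u)\in\operatorname{gra}A\}$, and set $q:=p-\rho u$.
\begin{itemize}
\item Then $\langle q_1-q_2,u_1-u_2\rangle=\langle p_1-p_2,u_1-u_2\rangle-\rho\|u_1-u_2\|^2$, so $M$ is monotone iff $A$ is $\rho$-comonotone.
\item The map $(p,u)\mapsto(p-\rho u,u)$ is a linear bijection of $\mathcal{H}\times\mathcal{H}$, with inverse $(q,u)\mapsto(q+\rho u,u)$. It carries $\rho$-comonotone graphs onto monotone graphs and back, so it preserves maximality in both directions for every real $\rho$.
\item The identity $x=p+\gamma u=q+(\gamma+\rho)u$ gives exactly $\operatorname{ran}(\operatorname{Id}+\gamma A)=\operatorname{ran}(\operatorname{Id}+\mu M)$ with $\mu=\gamma+\rho>0$. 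Minty's theorem then yields (2).
\end{itemize}
In the example above, this corrected $M$ is $N_{\{0\}}$, which is maximally monotone. This is consistent with $J_{\gamma A}=(1-\gamma)^{-1}\operatorname{Id}$ having full domain for $\gamma>1$.

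With this correction, and with the unfinished resolvent formula ending in an ellipsis deleted, your argument is a complete, self-contained proof. It is the standard reduction from Bauschke--Moursi--Wang: $A$ is (maximally) $\rho$-comonotone iff $A^{-1}-\rho\operatorname{Id}$ is (maximally) monotone. You carry it out directly on graphs rather than through the inverse-resolvent identity.
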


\begin{proposition} [\cite{tan2023second}] \label{prop:max_comonotone_properties}
Let $A: \mathcal{H} \rightrightarrows \mathcal{H}$ be $\rho$-comonotone and let $\gamma > \max\{0, -2\rho\}$. For any $x, y \in \mathcal{H}$, the following properties hold:
\begin{enumerate}
    \item[\rm (1)] $A$ is maximal $\rho$-comonotone if and only if $A_\gamma$ is $\left( \rho + \gamma \right)$-cocoercive;
    \item[\rm (2)] $J_{\gamma A} : \mathcal{H} \to \mathcal{H}$ is $\frac{\gamma}{2(\rho + \gamma)}$-averaged and $A_\gamma : \mathcal{H} \to \mathcal{H}$ is $\frac{1}{\rho + \gamma}$-Lipschitz continuous;
\end{enumerate}
\end{proposition}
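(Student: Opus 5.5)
The plan is to prove both items of Proposition~\ref{prop:max_comonotone_properties} by reducing everything to the Yosida approximation $A_\gamma$, using the identity
\[
A_\gamma=(\gamma\operatorname{Id}+A^{-1})^{-1}.
\]
This identity follows from the graph computation $u\in A_\gamma x \iff u\in A(x-\gamma u) \iff x-\gamma u\in A^{-1}u \iff x\in(\gamma\operatorname{Id}+A^{-1})u$.

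The first step is to note that $\rho$-comonotonicity of $A$ is exactly $\rho$-monotonicity of $A^{-1}$, since inverting swaps the two coordinates in $\langle x-y,u-v\rangle\ge\rho\|u-v\|^2$. Hence $\gamma\operatorname{Id}+A^{-1}$ is $(\rho+\gamma)$-monotone. Because $\gamma>-2\rho$ and $\gamma>0$, we have $\rho+\gamma>\gamma/2>0$, so this is a strongly monotone operator. Its inverse $A_\gamma$ is therefore $(\rho+\gamma)$-cocoercive on its domain, and in particular single-valued. This gives the cocoercivity part of (1) and the Lipschitz constant $\frac{1}{\rho+\gamma}$ in (2), modulo full domain.

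For the equivalence in (1):
\begin{itemize}
\item[$(\Rightarrow)$] If $A$ is maximal $\rho$-comonotone, Proposition~\ref{prop:resolvent_single_valued} gives $\operatorname{dom}J_{\gamma A}=\mathcal{H}$. Since $A_\gamma=\frac1\gamma(\operatorname{Id}-J_{\gamma A})$, we get $\operatorname{dom}A_\gamma=\mathcal{H}$, so $A_\gamma:\mathcal{H}\to\mathcal{H}$ is $(\rho+\gamma)$-cocoercive.
\item[$(\Leftarrow)$] If $A_\gamma$ is defined on all of $\mathcal{H}$, then $\operatorname{dom}J_{\gamma A}=\mathcal{H}$, and Proposition~\ref{prop:resolvent_single_valued}(2) again yields maximality.
\end{itemize}
I expect the main subtlety to be the reading of the statement: the operators in (2) are written as maps $\mathcal{H}\to\mathcal{H}$, so maximality is implicitly assumed there. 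In (1), ``$A_\gamma$ is $(\rho+\gamma)$-cocoercive'' must be read as a full-domain statement, since cocoercivity on the domain holds regardless of maximality. I will make this explicit.

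For (2), write $J_{\gamma A}=\operatorname{Id}-\gamma A_\gamma$. Since $A_\gamma$ is $(\rho+\gamma)$-cocoercive, Proposition~\ref{prop:cocoercive_averaged} with $\nu=\rho+\gamma$ applies because $\gamma<2(\rho+\gamma)$, which is equivalent to $\gamma>-2\rho$. It shows that $\operatorname{Id}-\gamma A_\gamma$ is $\frac{\gamma}{2(\rho+\gamma)}$-averaged, and this constant lies in $(0,1)$. The Lipschitz bound follows from cocoercivity by Cauchy--Schwarz. The only real care needed is the sign bookkeeping for $\rho<0$ and checking that the averagedness constant lies strictly in $(0,1)$; the condition $\gamma>-2\rho$ is exactly what guarantees it.
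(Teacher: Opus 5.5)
Your proposal is correct. The paper gives no proof of this proposition; it is imported from \cite{tan2023second}, so there is no in-paper argument to compare against, and what you wrote is a self-contained derivation from the other preliminaries.

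Your route is the standard one in the comonotone literature, and every step checks out:
\begin{itemize}
\item the graph identity $A_\gamma=(\gamma\operatorname{Id}+A^{-1})^{-1}$;
\item the observation that $\rho$-comonotonicity of $A$ is $\rho$-monotonicity of $A^{-1}$, which gives $(\rho+\gamma)$-strong monotonicity of $\gamma\operatorname{Id}+A^{-1}$ and hence $(\rho+\gamma)$-cocoercivity and single-valuedness of $A_\gamma$ on its domain;
\item Proposition~\ref{prop:resolvent_single_valued} for the full-domain equivalence, using $\operatorname{dom}A_\gamma=\operatorname{dom}J_{\gamma A}$;
\item Proposition~\ref{prop:cocoercive_averaged} with $\nu=\rho+\gamma$ for averagedness of $J_{\gamma A}=\operatorname{Id}-\gamma A_\gamma$.
\end{itemize}
The parameter bookkeeping is also right: $\rho+\gamma>\gamma/2>0$, and $\frac{\gamma}{2(\rho+\gamma)}\in(0,1)$ exactly when $\gamma>-2\rho$.

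What your version buys is that it isolates the only nontrivial input, the Minty-type surjectivity packaged in Proposition~\ref{prop:resolvent_single_valued}, and makes the domain issue explicit. Your two reading remarks are correct and worth keeping:
\begin{itemize}
\item If cocoercivity in (1) is understood only on $\operatorname{dom}A_\gamma$, the implication ``$\Leftarrow$'' fails. For example, $\operatorname{gra}A=\{(0,0)\}$ is $\rho$-comonotone and $A_\gamma$ is trivially cocoercive on $\{0\}$, yet $A$ is not maximal.
\item Item (2) as written needs maximality, since otherwise $J_{\gamma A}$ is not defined on all of $\mathcal{H}$.
\end{itemize}
The second point matters in the paper itself. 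Lemma~\ref{thm:coco_T_final} invokes item (2) for an $A$ assumed only $\rho$-comonotone, and it is Assumption \textbf{(H1)} that supplies the maximality actually needed there.
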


\begin{lemma}
\label{thm:coco_T_final}
Let $A: \mathcal{H} \to 2^{\mathcal{H}}$ be a $\rho$-comonotone operator and $B: \mathcal{H} \to \mathcal{H}$ be a $\nu>0$-cocoercive operator. For any parameters $\lambda > 0$ and $\gamma \in \left( \max\{0, -2\rho\}, \, 2\nu \right)$, the splitting operator defined by $T_{\lambda, \gamma} := \frac{1}{\lambda} \left[ \operatorname{Id} - J_{\gamma A} \circ \left( \operatorname{Id} - \gamma B \right) \right]$ is $\Lambda$-cocoercive on $\mathcal{H}$, where $\Lambda := \lambda \cdot \frac{4\nu \left( \rho + \gamma \right) - \gamma^2}{4\gamma \left( \nu + \rho \right)}>0$.
\end{lemma}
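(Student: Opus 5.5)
The approach is to write $T_{\lambda,\gamma}=\frac{1}{\lambda}(\operatorname{Id}-R)$ with $R:=J_{\gamma A}\circ(\operatorname{Id}-\gamma B)$. I will show that $R$ is $\alpha$-averaged for an explicit $\alpha\in(0,1)$. The last assertion of Proposition~\ref{prop:averaged_characterization} then makes $\operatorname{Id}-R$ cocoercive, and the scaling by $\frac{1}{\lambda}$ gives the constant $\Lambda$.

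\textbf{Step 1 (the two factors are averaged).} Since $\gamma\in(\max\{0,-2\rho\},2\nu)$, Proposition~\ref{prop:max_comonotone_properties}(2) shows that $J_{\gamma A}$ is $\alpha_1$-averaged with $\alpha_1:=\frac{\gamma}{2(\rho+\gamma)}$. Proposition~\ref{prop:cocoercive_averaged} shows that $\operatorname{Id}-\gamma B$ is $\alpha_2$-averaged with $\alpha_2:=\frac{\gamma}{2\nu}$. Both constants lie in $(0,1)$:
\begin{itemize}
\item $\gamma>-2\rho$ gives $\rho+\gamma>\gamma/2>0$, hence $0<\alpha_1<1$;
\item $\gamma<2\nu$ gives $0<\alpha_2<1$.
\end{itemize}
Moreover $-2\rho<\gamma<2\nu$ yields $\nu+\rho>0$, which is needed for $\Lambda$ to be well defined.

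\textbf{Step 2 (composition).} Proposition~\ref{prop:composition_averaged} shows that $R$ is $\alpha$-averaged with $\alpha=\frac{\alpha_1+\alpha_2-2\alpha_1\alpha_2}{1-\alpha_1\alpha_2}$. Substituting the values, the numerator is
\[
\frac{\gamma}{2(\rho+\gamma)}+\frac{\gamma}{2\nu}-\frac{\gamma^2}{2\nu(\rho+\gamma)}=\frac{\gamma(\nu+\rho)}{2\nu(\rho+\gamma)},
\]
and the denominator is $\frac{4\nu(\rho+\gamma)-\gamma^2}{4\nu(\rho+\gamma)}$. Hence
\[
\alpha=\frac{2\gamma(\nu+\rho)}{4\nu(\rho+\gamma)-\gamma^2}.
\]
Since $\alpha_1\alpha_2<1$, we get $4\nu(\rho+\gamma)-\gamma^2>0$, and therefore $\Lambda>0$.

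\textbf{Step 3 (cocoercivity and scaling).} By the last assertion of Proposition~\ref{prop:averaged_characterization}, $\operatorname{Id}-R=\lambda T_{\lambda,\gamma}$ is $\frac{1}{2\alpha}$-cocoercive. Therefore, for all $x,y$,
\[
\langle T_{\lambda,\gamma}x-T_{\lambda,\gamma}y,x-y\rangle=\frac{1}{\lambda}\langle (\operatorname{Id}-R)x-(\operatorname{Id}-R)y,x-y\rangle\geq \frac{\lambda}{2\alpha}\|T_{\lambda,\gamma}x-T_{\lambda,\gamma}y\|^2 .
\]
Finally,
\[
\frac{\lambda}{2\alpha}=\lambda\frac{4\nu(\rho+\gamma)-\gamma^2}{4\gamma(\nu+\rho)}=\Lambda .
\]

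\textbf{Main obstacle.} The delicate point is Step 1 for $J_{\gamma A}$. Applying Propositions~\ref{prop:composition_averaged} and~\ref{prop:averaged_characterization} on all of $\mathcal{H}$ requires $J_{\gamma A}$ to be single-valued with full domain. By Proposition~\ref{prop:resolvent_single_valued}, this needs maximal $\rho$-comonotonicity of $A$, which the lemma does not explicitly assume. I would invoke Proposition~\ref{prop:max_comonotone_properties}(2), which the paper states for $\rho$-comonotone $A$, taking maximality as implicit; equivalently, the proof yields the cocoercivity inequality at least on $\operatorname{dom}R$. I would add a remark that maximality is what is used in the sequel. The rest is the routine algebra above.
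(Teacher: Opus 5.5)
Your proposal is correct and follows essentially the same route as the paper. Both arguments show that $J_{\gamma A}$ and $\operatorname{Id}-\gamma B$ are averaged (Propositions~\ref{prop:max_comonotone_properties}(2) and~\ref{prop:cocoercive_averaged}), combine them with Proposition~\ref{prop:composition_averaged}, and conclude with the cocoercivity characterization in Proposition~\ref{prop:averaged_characterization}. Your explicit rescaling step giving $\lambda/(2\alpha)$, and your remark that a full-domain $J_{\gamma A}$ really requires maximal $\rho$-comonotonicity (as assumed in \textbf{(H1)}), are more careful than the paper's write-up, which writes $\Lambda=\frac{1}{2\alpha'}$ and drops the factor $\lambda$ in that intermediate equality.
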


\begin{proof}
According to Proposition \ref{prop:max_comonotone_properties} (2), since the operator $A$ is $\rho$-comonotone and $\gamma > -2\rho$, its resolvent $J_{\gamma A} = (\operatorname{Id} + \gamma A)^{-1}$ is $\alpha_1$-averaged with the coefficient $\alpha_1 = \frac{\gamma}{2\left( \rho + \gamma \right)} \in(0,1)$.
Concurrently, since the operator $B$ is $\nu$-cocoercive and the parameter satisfies $\gamma \in (0, 2\nu)$, Proposition \ref{prop:cocoercive_averaged} guarantees that the mapping $\operatorname{Id} - \gamma B$ is $\alpha_2$-averaged with the coefficient $\alpha_2 = \frac{\gamma}{2\nu}\in(0,1)$. 
Furthermore, applying Proposition \ref{prop:composition_averaged} to the composition mapping $J_{\gamma A} \circ \left( \operatorname{Id} - \gamma B \right)$ leads to its averageness with $\alpha'= \frac{\alpha_{1} + \alpha_{2} - 2\alpha_{1} \alpha_{2}}{1 - \alpha_{1} \alpha_{2}}=\frac{2\gamma\left( \nu + \rho \right)}{4\nu\left( \rho + \gamma \right) - \gamma^2}\in \left( 0, 1 \right)$. Thus, it follows from Proposition \ref{prop:averaged_characterization} and the definition of $T_{\lambda, \gamma}$ that the splitting operator $T_{\lambda, \gamma}$ is $\Lambda :=\frac{1}{2\alpha^{\prime}}= \lambda \cdot \frac{4\nu \left( \rho + \gamma \right) - \gamma^2}{4\gamma \left( \nu + \rho \right)}>0$-cocoercive on $\mathcal{H}$.
\end{proof}

\begin{lemma}(See \cite[Proposition 6.2.1]{Haraux A1991})
\label{cauchy-lipschitz}
Let $\mathbb{X}$ be a real Banach space and $F: [t_{0}, +\infty) \times \mathbb{X} \to \mathbb{X}$ be a function. Suppose that
\begin{itemize}
\item[{\rm(i)}] $F(t, \cdot): \mathbb{X} \to \mathbb{X}$ is continuous and $\|F(t,x)-F(t,y)\|\leq M(t,\|x\|+\|y\|)\|x-y\|$ for all $x,y\in\mathbb{X}$, where $M(t,l)\in L^{1}_{\mathrm{loc}}([t_{0}, +\infty))$ for all $l>0$, for almost all $t \in [t_{0}, +\infty)$, where $L_{\mathrm{loc}}^1([t_0,+\infty))$ denotes the family of locally integrable functions on $[t_0,+\infty)$;
    \item[{\rm(ii)}] for every $x \in \mathbb{X}$, $F(\cdot, x) \in L^{1}_{\mathrm{loc}}([t_{0}, +\infty))$;
    \item[{\rm(iii)}] $F(t, \cdot): \mathbb{X} \to \mathbb{X}$ satisfies
$\|F(t, x)\| \leq P(t)(1 + \|x\|),\,\,\,\,  P(t) \in L^{1}_{\mathrm{loc}}([t_{0}, +\infty))$ for almost all $t \in [t_{0}, +\infty)$.
\end{itemize}
Then,  the equation $\frac{d}{dt}x(t) = F(t, x(t))$, $x(t_{0}) = x_{0}$ has a unique global trajectory $x: [t_{0}, +\infty) \to \mathbb{X}$.
\end{lemma}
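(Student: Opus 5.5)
This is the Cauchy–Lipschitz–Haraux global existence and uniqueness theorem from \cite{Haraux A1991}, quoted as Lemma \ref{cauchy-lipschitz}. Within the paper it serves as a black box, so the natural proof is to cite it. If I had to argue it directly, I would use the classical Picard scheme on bounded intervals, followed by a continuation argument driven by the linear growth bound (iii).

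\textbf{Local existence and uniqueness.} Fix $T>t_0$ and write the problem in integral form,
\[
x(t)=x_0+\int_{t_0}^t F(s,x(s))\,ds,
\]
which makes sense because $s\mapsto F(s,x(s))$ is strongly measurable and locally integrable for continuous $x$. Measurability comes from (ii) together with the continuity in (i), via approximation by step functions. Integrability comes from (iii), since $\|F(s,x(s))\|\le P(s)(1+\sup\|x\|)$.

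On the ball $\{\|x\|_{C([t_0,\tau];\mathbb{X})}\le R\}$, the map $\Phi(x)(t)=x_0+\int_{t_0}^tF(s,x(s))\,ds$ has Lipschitz constant at most $\int_{t_0}^\tau M(s,2R)\,ds$. This constant tends to $0$ as $\tau\downarrow t_0$ because $M(\cdot,2R)\in L^1_{\mathrm{loc}}$. The ball is mapped into itself for small $\tau$ once $R>\|x_0\|$. Banach's fixed point theorem then gives a unique local solution.

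\textbf{Uniqueness on any common interval.} Let $x,y$ be two solutions on $[t_0,\tau]$, both bounded by some $R$ there. Their difference satisfies
\[
\|x(t)-y(t)\|\le\int_{t_0}^t M(s,2R)\|x(s)-y(s)\|\,ds,
\]
so Gronwall's lemma gives $x\equiv y$.

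\textbf{Globality.} This is the main obstacle, namely excluding blow-up in finite time. Let $[t_0,T^*)$ be the maximal interval of existence. By (iii),
\[
1+\|x(t)\|\le 1+\|x_0\|+\int_{t_0}^tP(s)(1+\|x(s)\|)\,ds,
\]
and Gronwall's lemma yields
\[
\|x(t)\|\le (1+\|x_0\|)\exp\Bigl(\int_{t_0}^tP\Bigr)-1.
\]
If $T^*<\infty$, then $x$ is bounded on $[t_0,T^*)$, say by $R$. In that case
\[
\|x(t)-x(t')\|\le (1+R)\int_{t'}^tP,
\]
so $x$ is Cauchy as $t\uparrow T^*$ and has a limit $x(T^*)$. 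Restarting the local step at $T^*$ contradicts maximality. Hence $T^*=+\infty$, and the solution is a unique, locally absolutely continuous global trajectory.
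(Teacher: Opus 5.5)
The paper gives no proof of this lemma; it imports it from Haraux. Your sketch is the standard argument behind that citation: a Banach fixed point on short intervals, Gronwall for uniqueness, and continuation via the a priori bound from (iii). So the route agrees, and the measurability, self-mapping, Gronwall and Cauchy-limit steps are all fine.

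One step needs an assumption made explicit. You replace $M(s,\|x(s)\|+\|y(s)\|)$ by $M(s,2R)$, both in the contraction constant and in the Gronwall uniqueness estimate. That tacitly uses that $M(s,\cdot)$ is nondecreasing, which the statement as transcribed does not say.

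This monotone reading is the intended one. Equivalently, for each $R>0$ there is $L_R\in L^1_{\mathrm{loc}}$ with $\|F(t,x)-F(t,y)\|\le L_R(t)\|x-y\|$ whenever $\|x\|,\|y\|\le R$.

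It cannot be dropped. Take $\mathbb{X}=\mathbb{R}$, $t_0=1$ and $F(t,x)=1+\sqrt{|x-t|}$. This field satisfies (i)--(iii) with $P(t)=1+t$ and the non-monotone modulus $M(t,l)=1+|l-2t|^{-1/2}$ for $l\neq 2t$, $M(t,2t)=1$. Each $M(\cdot,l)$ is locally integrable. Yet $x(t)=t$ and $x(t)=t+\tfrac14(t-1)^2$ both solve the Cauchy problem with $x(1)=1$. No proof can work for the literal statement.

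Under the monotone reading every step of your sketch goes through. In the paper's only use of the lemma (Theorem~\ref{thm:existence-uniqueness_SHDS}), the field is globally Lipschitz with modulus $L_F(t)$ independent of $l$, so nothing is lost there.
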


\begin{lemma} (See \cite[Lemma A.2]{Attouch H_2019})
\label{lem:opial}
Let $S$ be a nonempty subset of $\mathcal{H}$ and let $x : [t_0, +\infty) \to \mathcal{H}$. Assume that
\begin{enumerate}
\item[{\rm(i)}] for every $x^* \in S$, $\lim_{t \to \infty} \|x(t) - x^*\|$ exists;
\item[{\rm(ii)}] every weak sequential limit point of $x(t)$ as $t \to +\infty$, belongs to $S$.
\end{enumerate}
Then $x(t)$ converges weakly as $t \to +\infty$ to a point in $S$.
\end{lemma}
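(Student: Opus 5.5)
The plan is the classical Opial argument, transposed from sequences to a continuous parameter $t\to+\infty$. It has three steps: boundedness of the trajectory, uniqueness of weak sequential cluster points, and a subsequence argument that upgrades this to weak convergence of the whole trajectory.

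\emph{Step 1 (boundedness).} Since $S\neq\emptyset$, fix some $x^*\in S$. By (i), $\lim_{t\to\infty}\|x(t)-x^*\|$ exists and is finite, so there are $T\ge t_0$ and $C>0$ with $\|x(t)\|\le C$ for all $t\ge T$. Hence for every sequence $t_n\to+\infty$ the sequence $(x(t_n))_n$ is eventually bounded. Because bounded sets in the Hilbert space $\mathcal{H}$ are weakly sequentially compact, $(x(t_n))_n$ then has a weakly convergent subsequence. In particular, the set of weak sequential limit points of $x(t)$ as $t\to+\infty$ is nonempty, and by (ii) it is contained in $S$.

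\emph{Step 2 (uniqueness of the cluster point).} Let $\bar x_1,\bar x_2$ be two weak sequential limit points, with $x(t_n)\rightharpoonup \bar x_1$ and $x(s_n)\rightharpoonup \bar x_2$ for some $t_n,s_n\to+\infty$. Both lie in $S$ by (ii), so by (i) the limits of $\|x(t)-\bar x_1\|^2$ and $\|x(t)-\bar x_2\|^2$ exist. Expanding the squares gives the identity
\[
\|x(t)-\bar x_1\|^2-\|x(t)-\bar x_2\|^2 = 2\langle x(t),\bar x_2-\bar x_1\rangle+\|\bar x_1\|^2-\|\bar x_2\|^2 .
\]
The left-hand side has a limit as $t\to+\infty$, so $\lim_{t\to\infty}\langle x(t),\bar x_2-\bar x_1\rangle$ exists; call it $\ell$. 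Evaluating this limit along $t_n$ and along $s_n$, using weak convergence, yields
\[
\langle \bar x_1,\bar x_2-\bar x_1\rangle=\ell=\langle \bar x_2,\bar x_2-\bar x_1\rangle .
\]
Subtracting gives $\|\bar x_2-\bar x_1\|^2=0$, hence $\bar x_1=\bar x_2$. Call this unique weak cluster point $\bar x$; it belongs to $S$.

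\emph{Step 3 (conclusion).} Suppose $x(t)$ did not converge weakly to $\bar x$. Then there would exist $y\in\mathcal{H}$, $\varepsilon>0$ and $t_n\to+\infty$ with $|\langle x(t_n)-\bar x,y\rangle|\ge\varepsilon$ for all $n$. By Step 1, some subsequence of $(x(t_n))_n$ converges weakly, and by Step 2 its limit must be $\bar x$. This contradicts the lower bound $\varepsilon$ along that subsequence, so $x(t)\rightharpoonup\bar x\in S$.

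The only delicate point is in Step 2: the argument needs the limit $\ell$ of $\langle x(t),\bar x_2-\bar x_1\rangle$ along the full continuous parameter, not just along a sequence. The displayed identity delivers exactly this directly from hypothesis (i), so no measurability or continuity assumption on $x(\cdot)$ is required.
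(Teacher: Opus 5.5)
Your proof is correct and is the classical Opial argument: boundedness from (i), uniqueness of weak cluster points via the identity $\|x(t)-\bar x_1\|^2-\|x(t)-\bar x_2\|^2=2\langle x(t),\bar x_2-\bar x_1\rangle+\|\bar x_1\|^2-\|\bar x_2\|^2$, and a subsequence argument for weak convergence of the whole trajectory. The paper gives no proof of its own and only cites Attouch et al., whose proof is this same standard argument, so your route matches the intended one.
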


\begin{lemma}
\label{lem:weighted_decay_criterion}
Let $r:[t_0,+\infty)\to(0,+\infty)$ be locally integrable and satisfy
$\int_{t_0}^{+\infty}r(t)dt=+\infty$. Let $F:[t_0,+\infty)\to\mathcal{H}$ be locally absolutely continuous. Assume that $\int_{t_0}^{+\infty} r(t)\|F(t)\|_{\mathcal{H}}^2dt<+\infty$ and that, for some $t_1\ge t_0$ and $L>0$, $\|\dot F(t)\|_{\mathcal{H}}\le Lr(t)$ 
for almost every $t\ge t_1$. Then
$\lim_{t\to+\infty}\|F(t)\|_{\mathcal{H}}=0$.
\end{lemma}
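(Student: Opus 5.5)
We argue by contradiction, in the spirit of the classical Barbalat-type lemmas. Set $\phi(t):=\|F(t)\|^2$. Since $F$ is locally absolutely continuous, so is $\phi$, with $\dot\phi(t)=2\langle F(t),\dot F(t)\rangle$ for a.e.\ $t$.

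\textbf{Step 1: $\liminf_{t\to+\infty}\|F(t)\|=0$.} If instead $\|F(t)\|\ge c>0$ for all $t\ge T$, then
\begin{equation*}
\int_T^{+\infty}r(t)\|F(t)\|^2dt\ \ge\ c^2\int_T^{+\infty}r(t)dt=+\infty,
\end{equation*}
because $r$ is locally integrable, so $\int_{t_0}^T r<\infty$ and the tail integral must diverge. This contradicts the hypothesis.

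\textbf{Step 2: reduction of $\limsup$ to an excursion argument.} Suppose $\limsup_{t\to+\infty}\|F(t)\|>2\varepsilon>0$. By Step 1 and continuity of $\|F\|$, we can build sequences $t_1\le s_n<\tau_n<s_{n+1}$, with $s_n\to\infty$, such that
\begin{equation*}
\|F(s_n)\|=\varepsilon,\qquad \|F(\tau_n)\|=2\varepsilon,\qquad \varepsilon\le\|F(t)\|\le 2\varepsilon \ \text{ on } [s_n,\tau_n].
\end{equation*}
For instance, take $\tau_n$ a time where $\|F\|$ exceeds $2\varepsilon$ and let $s_n$ be the last time before it at which $\|F\|=\varepsilon$; then adjust $\tau_n$ to be the first hitting time of $2\varepsilon$ after $s_n$. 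The intervals are then disjoint.

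\textbf{Step 3: each excursion carries a fixed amount of $r$-mass.} On each interval, $\|F\|$ is absolutely continuous and $\bigl|\tfrac{d}{dt}\|F(t)\|\bigr|\le\|\dot F(t)\|\le L r(t)$ a.e. Hence
\begin{equation*}
\varepsilon=\|F(\tau_n)\|-\|F(s_n)\|\le L\int_{s_n}^{\tau_n}r(t)\,dt.
\end{equation*}
If one prefers to avoid differentiating the norm, the same estimate follows by working with $\phi$, using $|\dot\phi|\le 2\|F\|\,\|\dot F\|\le 4\varepsilon L r$ on the interval.

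\textbf{Step 4: contradiction.} Combining Steps 2 and 3,
\begin{equation*}
\int_{s_n}^{\tau_n}r\|F\|^2\,dt\ \ge\ \varepsilon^2\int_{s_n}^{\tau_n}r\,dt\ \ge\ \frac{\varepsilon^3}{L}.
\end{equation*}
Summing over the infinitely many disjoint intervals gives $\int r\|F\|^2=+\infty$, a contradiction. Therefore $\limsup_{t\to+\infty}\|F(t)\|=0$, i.e.\ $\|F(t)\|\to0$.

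The main technical care is in Step 2, namely the rigorous selection of disjoint crossing intervals from a merely continuous function. This is handled by taking last and first hitting times and using the intermediate value theorem.
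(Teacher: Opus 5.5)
Your proof is correct, but it takes a different route from the paper's.

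\textbf{The paper's route.} The paper changes the clock via $\tau(t)=\int_{t_1}^{t}r(u)\,du$. This is a strictly increasing bijection of $[t_1,+\infty)$ onto $[0,+\infty)$ because $r>0$ and $\int^{+\infty}r=+\infty$. The reparametrized curve $G=F\circ\tau^{-1}$ is then $L$-Lipschitz and square integrable on $[0,+\infty)$. The conclusion follows from the classical fact that such a function tends to zero: around each $s_n$ with $\|G(s_n)\|\ge\varepsilon$, a window of fixed length $\varepsilon/(2L)$ contributes at least $\varepsilon^3/(8L)$ to $\int\|G\|^2$.

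\textbf{Your route.} You stay in the original time and use an upcrossing argument. Step 1 ($\liminf_{t\to+\infty}\|F(t)\|=0$) supplies the returns below level $\varepsilon$. Each upcrossing from $\varepsilon$ to $2\varepsilon$ is forced by $\|\dot F\|\le Lr$ to carry $r$-mass at least $\varepsilon/L$, and hence weighted energy at least $\varepsilon^3/L$.

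\textbf{Comparison.} Your version needs no inverse function and no change-of-variables formula. It also uses only $r\ge 0$, so it applies unchanged even if $r$ vanishes on sets of positive measure. Its cost is the selection of disjoint crossing intervals, which the paper's version avoids: there, the Lipschitz bound in the new clock already gives a neighborhood of uniform length around each large value, so no $\liminf$ step is needed.

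\textbf{One detail to tighten.} State the interval selection explicitly, since ``the last time before it'' does not by itself guarantee disjointness. After $\tau_{n-1}$, pick $u_n\ge n$ with $\|F(u_n)\|<\varepsilon$ (available from Step 1), then a later $v_n$ with $\|F(v_n)\|>2\varepsilon$. Take $s_n$ as the last time in $[u_n,v_n]$ with $\|F\|\le\varepsilon$, and $\tau_n$ as the first time after $s_n$ with $\|F\|\ge 2\varepsilon$. Continuity then gives $\|F(s_n)\|=\varepsilon$, $\|F(\tau_n)\|=2\varepsilon$ and $\varepsilon\le\|F\|\le2\varepsilon$ on $[s_n,\tau_n]$. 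The intervals are disjoint, with $s_n\to+\infty$.
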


\begin{proof}
It is enough to work on the tail $[t_1,+\infty)$. Define
\[
\tau:[t_1,+\infty)\to[0,+\infty),\qquad
\tau(t):=\int_{t_1}^{t}r(u)du .
\]
Since $r(t)>0$ for almost every $t > s \geq t_1$, one has
\[
\tau(t)-\tau(s)=\int_s^t r(u)du>0.
\]
Furthermore, $\tau(t)$ is continuous and $\tau(t)\to+\infty$ as $t\to+\infty$. Hence $\tau(t)$ is a continuous strictly increasing bijection from $[t_1,+\infty)$ onto $[0,+\infty)$, and its inverse $\tau^{-1}(t)$ is well defined. Let $G(s):=F(\tau^{-1}(s))$. For $0\le s_1<s_2$, with $t_i=\tau^{-1}(s_i)$, the derivative bound gives
\[
\|G(s_2)-G(s_1)\|_{\mathcal{H}}
\le \int_{t_1}^{t_2}\|\dot F(t)\|_{\mathcal{H}}dt
\le L\int_{t_1}^{t_2}r(t)dt
=L(s_2-s_1).
\]
Thus $G(s)$ is $L$-Lipschitz on $[0,+\infty)$. Moreover, the change of variables $s=\tau(t)$ gives
\[
\int_0^{+\infty}\|G(s)\|_{\mathcal{H}}^2ds
=\int_{t_1}^{+\infty}r(t)\|F(t)\|_{\mathcal{H}}^2dt<+\infty.
\]
Assume that $G(s)$ did not converge strongly to $0$, then there would exist $\varepsilon>0$ and $s_n\to+\infty$ such that $\|G(s_n)\|_{\mathcal{H}}\ge\varepsilon$. Passing to a subsequence, we may suppose that $s_n\ge \varepsilon/(4L)$ and that the intervals
$I_n=[s_n-\varepsilon/(4L),s_n+\varepsilon/(4L)]$ are pairwise disjoint. For every $s\in I_n$, one has $|s-s_n|\le \varepsilon/(4L)$, and the $L$-Lipschitz continuity of $G$ gives
\[
\|G(s)-G(s_n)\|_{\mathcal{H}}\le L|s-s_n|\le\frac{\varepsilon}{4}.
\]
Morever, utilizing the reverse triangle inequality yields
\[
\|G(s)\|_{\mathcal{H}}
\ge \|G(s_n)\|_{\mathcal{H}}-\|G(s)-G(s_n)\|_{\mathcal{H}}
\ge \varepsilon-\frac{\varepsilon}{4}
=\frac{3\varepsilon}{4}\ge\frac{\varepsilon}{2}.
\]
Hence
\[
\int_0^{+\infty}\|G(s)\|_{\mathcal{H}}^2ds
\ge \sum_n \int_{I_n}\frac{\varepsilon^2}{4}ds=+\infty,
\]
which contradicts the square integrability of $G(s)$. Since $F(t)=G(\tau(t))$ and $\tau(t)\to+\infty$ as $t\to+\infty$, it follows that $F(t)\to0$ as $t\to+\infty$.
\end{proof}

\section{Well-posedness of System\texorpdfstring{~\eqref{eq:SHDS}}{ (SHDS)}}
\label{sec:existence}

In this section, we investigate the existence and uniqueness of trajectories generated by the proposed dynamical system:
\begin{equation}
\label{eq:SHDS}
\ddot{x}(t) + \alpha(t)\dot{x}(t) + \beta(t)\frac{d}{dt}T_{\lambda(t), \gamma(t)}(x(t)) + b(t)T_{\lambda(t), \gamma(t)}(x(t)) = 0,
\end{equation}
where $T_{\lambda(t), \gamma(t)} = \frac{1}{\lambda(t)}(\operatorname{Id} - J_{\gamma(t) A} \circ (\operatorname{Id} - \gamma(t) B))$ is the $\Lambda(t) := \lambda(t) \cdot \frac{4\nu \left( \rho + \gamma(t) \right) - \gamma^2(t)}{4\gamma(t) \left( \nu + \rho \right)}$-cocoercive operator according to Lemma \ref{thm:coco_T_final}. The time-dependent coefficient functions $\alpha(t), \beta(t),$ and $b(t)$ are given by the following forms:
\begin{equation}\label{eq:coefficients_definition}
\left\{
\begin{aligned}
\alpha(t) &= \frac{\delta(t) - \dot{\Gamma}(t)}{1 - \Gamma(t)} - \frac{\dot{\delta}(t)}{\delta(t)}, \\
\beta(t) &= \frac{\lambda(t) \theta(t)}{1 - \Gamma(t)}, \\
b(t) &= \frac{\frac{d}{dt}\big(\lambda(t)\theta(t)\big) - \lambda(t)\theta(t) \frac{\dot{\delta}(t)}{\delta(t)}}{1 - \Gamma(t)}.
\end{aligned}
\right.
\end{equation}
where $\Gamma(t) := \theta(t)\phi(t)+(1-\theta(t))\psi(t)<1$.

The following assumptions are fundamental to our theoretical framework and are assumed to hold throughout this paper, unless explicitly stated otherwise:
\vskip 2mm
{\bf (H1)} The operator $A: \mathcal{H} \to 2^{\mathcal{H}}$ is maximally $\rho$-comonotone and $B: \mathcal{H} \to \mathcal{H}$ is $\nu$-cocoercive with $\nu > 0$. The parameter functions $\lambda(\cdot), \gamma(\cdot), \delta(\cdot), \theta(\cdot), \phi(\cdot), \psi(\cdot): [t_0, +\infty) \to \mathbb{R}$ are continuously differentiable and satisfy $\lambda(t) > 0$, $\delta(t) > 0$, $\theta(t) \in [0,1]$, $\phi(t) \ge 0$, $\psi(t) \ge 0$, $\gamma(t) \in [\underline{\gamma}, \bar{\gamma}] \subset \left( \max\{0, -2\rho\}, \, 2\nu \right)$, and $\inf_{t \geq t_0} \Lambda(t) > 0$;
\vskip 1mm

{\bf (H2)} The set of zeros $\operatorname{zer}(A+B) \neq \emptyset$, and $\bar{x}$ denotes an element in $\operatorname{zer}(A+B)$;
\vskip 1mm

{\bf (H3)} The coefficients satisfy $\alpha(t) > 0$, $b(t) \ge 0$, $\beta(t) > 0$; $\alpha(t), b(t)$ are locally integrable and $\beta(t)$ is locally absolutely continuous for all $t \ge t_0$.
\vskip 1mm

Before establishing the existence and uniqueness of a strong global solution to the system \eqref{eq:SHDS}, the definition of a strong global solution is introduced.
\begin{definition}
\label{def:strong_global_solution}
We say that \( x: [t_0, +\infty) \to \mathcal{H} \) is a {\it strong global solution} of the system \eqref{eq:SHDS} with the Cauchy data \( (x_0, u_0) \in \mathcal{H} \times \mathcal{H} \) if
\begin{enumerate}[label=(\roman*)]
\item $x(\cdot), \dot{x}(\cdot):[t_0, +\infty) \to \mathcal{H}$ are continuous and absolutely continuous on each bounded interval $[t_0, T]$ for $t_0 < T < +\infty$;
\item $\ddot{x}(t) + \alpha(t)\dot{x}(t) + \beta(t)\frac{d}{dt}T_{\lambda(t), \gamma(t)}(x(t)) + b(t)T_{\lambda(t), \gamma(t)}(x(t)) = 0$ for almost every $t \in [t_0, +\infty)$;
\item $x(t_0) = x_0$ and $\dot{x}(t_0) = u_0$.
\end{enumerate}
\end{definition}

\begin{theorem}
\label{thm:First-order formulation} 
Under Assumptions {\rm {\bf (H1)--(H3)}}, for any initial condition $(x_0, u_0) \in \mathcal{H}\times\mathcal{H}$, the following assertions are equivalent:
\vskip 1mm

{\rm(1)} A trajectory $ x(\cdot):[t_0,+\infty)\to\mathcal{H} $ satisfies the second order Cauchy problem: for almost every $t \geq t_0$:
\begin{equation}
\label{eq:SHDS_second_order}
\left\{
\begin{aligned}
&\ddot{x}(t) + \alpha(t)\dot{x}(t) + \beta(t)\frac{d}{dt}T_{\lambda(t), \gamma(t)}(x(t)) + b(t)T_{\lambda(t), \gamma(t)}(x(t)) = 0, \\
&x(t_0) = x_0, \,\,\, \dot{x}(t_0) = u_0.
\end{aligned}
\right.
\end{equation}
    
{\rm(2)} A pair of trajectories $ (x(\cdot),z(\cdot)):[t_0,+\infty)\to\mathcal{H}\times\mathcal{H} $ satisfies the equivalent first order formulation:
\begin{equation}
\label{eq:SHDS_first_order}
\left\{
\begin{aligned}
&\dot{x}(t) + \beta(t)T_{\lambda(t), \gamma(t)}(x(t)) + z(t) = 0, \\
&\dot{z}(t) - \alpha(t)\dot{x}(t) - \big(b(t) - \dot{\beta}(t)\big)T_{\lambda(t), \gamma(t)}(x(t)) = 0, \\
&x(t_0) = x_0, \,\,\, z(t_0) = -u_0 - \beta(t_0)T_{\lambda(t_0), \gamma(t_0)}(x_0).
\end{aligned}
\right.
\end{equation}
\end{theorem}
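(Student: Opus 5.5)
The plan is to prove the two implications directly by introducing the auxiliary variable
\[
z(t):=-\dot{x}(t)-\beta(t)T_{\lambda(t),\gamma(t)}(x(t)),
\]
and differentiating it in the sense of locally absolutely continuous functions. The only delicate point is regularity. We need to know that $t\mapsto T_{\lambda(t),\gamma(t)}(x(t))$ is locally absolutely continuous whenever $x$ is, so that the product rule and the chain rule hold almost everywhere.

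\textbf{Regularity of the composed map.} For fixed $t$, Lemma \ref{thm:coco_T_final} shows that $T_{\lambda(t),\gamma(t)}$ is $\frac{1}{\Lambda(t)}$-Lipschitz, and $\inf_t\Lambda(t)>0$ by (H1). So the only remaining issue is the dependence on the parameters $(\lambda,\gamma)$.

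For the $\gamma$-dependence, we use the resolvent identity
\[
J_{\gamma A}y=J_{\mu A}\Bigl(\tfrac{\mu}{\gamma}y+\bigl(1-\tfrac{\mu}{\gamma}\bigr)J_{\gamma A}y\Bigr),
\]
valid for comonotone $A$ since it follows purely from the graph of $A$. Combined with the Lipschitz bounds of Proposition \ref{prop:max_comonotone_properties}, it gives
\[
\|J_{\gamma A}y-J_{\mu A}y\|\le C|\gamma-\mu|\,(1+\|y\|)
\]
on $[\underline{\gamma},\bar{\gamma}]$, with $C$ locally uniform.

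Since $\gamma(\cdot)$ and $\lambda(\cdot)$ are $C^1$ and $\lambda>0$, on each bounded interval $[t_0,T]$ we get
\[
\bigl\|T_{\lambda(t),\gamma(t)}(x(t))-T_{\lambda(s),\gamma(s)}(x(s))\bigr\|\le K\bigl(|t-s|+\|x(t)-x(s)\|\bigr),
\]
whenever $x$ is bounded there. Hence $t\mapsto T_{\lambda(t),\gamma(t)}(x(t))$ is absolutely continuous on bounded intervals whenever $x$ is. This makes $\frac{d}{dt}T_{\lambda(t),\gamma(t)}(x(t))$ meaningful almost everywhere. Moreover, $\beta$ is locally absolutely continuous by (H3), so $\beta(t)T_{\lambda(t),\gamma(t)}(x(t))$ is locally absolutely continuous, and the product rule gives
\[
\frac{d}{dt}\bigl[\beta T(x)\bigr]=\dot\beta\,T(x)+\beta\,\frac{d}{dt}T(x)\quad\text{a.e.}
\]

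\textbf{(1)$\Rightarrow$(2).} Let $x$ be a strong solution and define $z$ as above. The first equation of \eqref{eq:SHDS_first_order} holds by construction. Evaluating at $t_0$ gives
\[
z(t_0)=-u_0-\beta(t_0)T_{\lambda(t_0),\gamma(t_0)}(x_0).
\]
Since $\dot x$ is locally absolutely continuous, so is $z$. Differentiating almost everywhere,
\[
\dot z=-\ddot x-\dot\beta\,T(x)-\beta\,\frac{d}{dt}T(x).
\]
Substituting $\ddot x=-\alpha\dot x-\beta\frac{d}{dt}T(x)-bT(x)$ from \eqref{eq:SHDS_second_order}, the $\beta\frac{d}{dt}T(x)$ terms cancel and we obtain
\[
\dot z=\alpha\dot x+(b-\dot\beta)T(x),
\]
which is the second equation of \eqref{eq:SHDS_first_order}.

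\textbf{(2)$\Rightarrow$(1).} Suppose $(x,z)$ solves \eqref{eq:SHDS_first_order}, understood with $x,z$ locally absolutely continuous. From the first equation,
\[
\dot x=-\beta\,T(x)-z.
\]
Here $z$ is locally absolutely continuous, and $x$ is continuous, hence locally bounded. By the regularity step, $\beta T(x)$ is locally absolutely continuous, so $\dot x$ is locally absolutely continuous as well. Differentiating almost everywhere,
\[
\ddot x=-\dot\beta\,T(x)-\beta\,\frac{d}{dt}T(x)-\dot z.
\]
Inserting $\dot z=\alpha\dot x+(b-\dot\beta)T(x)$, the $\dot\beta\,T(x)$ terms cancel and we recover \eqref{eq:SHDS_second_order}. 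The initial data give $\dot x(t_0)=-\beta(t_0)T(x_0)-z(t_0)=u_0$.

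\textbf{Main obstacle.} The algebra in both directions is immediate; the main obstacle is the regularity step. Specifically, the Lipschitz dependence of $J_{\gamma A}$ on $\gamma$ for a merely comonotone (possibly non-monotone) $A$ must be established carefully. I would first try to derive it from the resolvent identity above together with the $\frac{1}{\rho+\gamma}$-Lipschitz bound on $A_\gamma$ from Proposition \ref{prop:max_comonotone_properties}.
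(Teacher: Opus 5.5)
Your proposal is correct and follows essentially the same route as the paper. The paper obtains the same auxiliary variable $z=-\dot x-\beta T_{\lambda(t),\gamma(t)}(x)$ by integrating the rewritten equation $\ddot x+\alpha\dot x+\frac{d}{dt}\bigl(\beta T_{\lambda(t),\gamma(t)}(x)\bigr)+(b-\dot\beta)T_{\lambda(t),\gamma(t)}(x)=0$ from $t_0$, and handles the converse by the same differentiate-and-substitute step. Your justification that $t\mapsto T_{\lambda(t),\gamma(t)}(x(t))$ is locally absolutely continuous, via the resolvent identity, is a useful addition: the paper leaves this point implicit here, and your argument mirrors the sensitivity estimate it later proves in Lemma~\ref{lem:sharp_sensitivity_final}.
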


\begin{proof}
In fact, the system \eqref{eq:SHDS} can be rewritten as:
\begin{equation}\label{eq:proof_rearranged}
\begin{aligned}
\ddot{x}(t) + \alpha(t)\dot{x}(t) + \frac{d}{dt}\left(\beta(t)T_{\lambda(t), \gamma(t)}(x(t))\right) + \left(b(t) - \dot{\beta}(t)\right)T_{\lambda(t), \gamma(t)}(x(t)) = 0,
\end{aligned}
\end{equation}
for almost every $t \geq t_0$. Integrating \eqref{eq:proof_rearranged} from $t_0$ to $t \geq t_0$, with the initial condition $\dot{x}(t_0) = u_0$, we obtain:
\begin{equation*}
\label{eq:proof_integrated}
\begin{aligned}
\dot{x}(t) + \beta(t)T_{\lambda(t), \gamma(t)}(x(t)) + z(t) = 0,
\end{aligned}
\end{equation*}
where
\begin{equation*}
\begin{aligned}
z(t) := \int_{t_0}^{t} \alpha(s)\dot{x}(s)ds + \int_{t_0}^{t} \left(b(s) - \dot{\beta}(s)\right)T_{\lambda(s), \gamma(s)}(x(s))ds - u_0 - \beta(t_0)T_{\lambda(t_0), \gamma(t_0)}(x_0),
\end{aligned}
\end{equation*}
which admits the derivative $\dot{z}(t) = \alpha(t)\dot{x}(t) + \big(b(t) - \dot{\beta}(t)\big)T_{\lambda(t), \gamma(t)}(x(t))$.
\vskip 1mm
If $x(t)$ is a solution of the system \eqref{eq:SHDS} with the initial conditions $x(t_0) = x_0$ and $\dot{x}(t_0) = u_0$, then the definition of $z(t)$ gives that the pair $(x(t), z(t))$ satisfies the first order system \eqref{eq:SHDS_first_order} with $x(t_0) = x_0$ and $z(t_0) = - u_0 - \beta(t_0)T_{\lambda(t_0), \gamma(t_0)}(x_0)$.
\vskip 1mm
Conversely, assume  $(x(t), z(t))$ 
satisfies the first-order system \eqref{eq:SHDS_first_order} with the given initial conditions. Differentiating $\dot{x}(t)$
and substituting  $\dot{z}(t)$ recovers system \eqref{eq:SHDS}, with initial conditions consistent by construction. This completes the proof.
\end{proof}

\begin{theorem}
\label{thm:existence-uniqueness_SHDS}
Under Assumptions {\rm {\bf (H1)--(H3)}}, for any initial point $(x_0, u_0) \in \mathcal{H} \times \mathcal{H}$, there exists a unique strong global solution $x(\cdot) : [t_0, +\infty) \to \mathcal{H}$ to the second order Cauchy problem for almost every $t \geq t_0$:
\begin{equation*}
\left\{
\begin{aligned}
&\ddot{x}(t) + \alpha(t)\dot{x}(t) + \beta(t)\frac{d}{dt}T_{\lambda(t), \gamma(t)}(x(t)) + b(t)T_{\lambda(t), \gamma(t)}(x(t)) = 0, \\
&x(t_0) = x_0, \,\,\, \dot{x}(t_0) = u_0.
\end{aligned}
\right.
\end{equation*}
\end{theorem}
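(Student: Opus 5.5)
The plan is to reduce the second-order Cauchy problem to the first-order system \eqref{eq:SHDS_first_order} of Theorem~\ref{thm:First-order formulation} on the Hilbert space $\mathbb{X}=\mathcal{H}\times\mathcal{H}$, and then apply the Cauchy--Lipschitz result, Lemma~\ref{cauchy-lipschitz}. Writing $X=(x,z)$ and $T_t:=T_{\lambda(t),\gamma(t)}$, the system \eqref{eq:SHDS_first_order} reads $\dot X(t)=F(t,X(t))$ with
\[
F(t,x,z)=\Bigl(-\beta(t)T_t(x)-z,\;\; -\alpha(t)\bigl(\beta(t)T_t(x)+z\bigr)+\bigl(b(t)-\dot\beta(t)\bigr)T_t(x)\Bigr).
\]
Here I have substituted $\dot x=-\beta T_t(x)-z$ into the second equation. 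Once a unique global solution $(x,z)$ is obtained, the equivalence in Theorem~\ref{thm:First-order formulation} gives the existence of a strong global solution. Conversely, any strong solution yields a solution of \eqref{eq:SHDS_first_order}, so uniqueness transfers as well.

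\textbf{Lipschitz bound, condition (i).} By Lemma~\ref{thm:coco_T_final}, $T_t$ is $\Lambda(t)$-cocoercive, hence $\frac{1}{\Lambda(t)}$-Lipschitz. By (H1), $\Lambda_*:=\inf_t\Lambda(t)>0$. It follows that
\[
\|F(t,X)-F(t,Y)\|\le \Bigl(1+\alpha(t)+\frac{\beta(t)(1+\alpha(t))+|b(t)-\dot\beta(t)|}{\Lambda_*}\Bigr)\|X-Y\|.
\]
This is a global Lipschitz bound $M(t)$, independent of $\|X\|+\|Y\|$. It is locally integrable because $\alpha,b\in L^1_{\rm loc}$, $\beta$ is continuous (hence locally bounded) and $\dot\beta\in L^1_{\rm loc}$, the last two by local absolute continuity in (H3).

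\textbf{Linear growth, condition (iii).} I use a point $\bar x\in\operatorname{zer}(A+B)$ from (H2). Since $-\gamma B\bar x\in\gamma A\bar x$, we have $J_{\gamma A}(\bar x-\gamma B\bar x)=\bar x$ for every admissible $\gamma$, so $T_t(\bar x)=0$ for all $t$. Therefore $\|T_t(x)\|\le \|x-\bar x\|/\Lambda_*$, which gives $\|F(t,X)\|\le P(t)(1+\|X\|)$ with $P(t)$ a constant multiple of $M(t)(1+\|\bar x\|)$, again locally integrable.

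\textbf{Continuity in $t$, condition (ii).} This is the step I expect to be the main obstacle: showing that $t\mapsto T_t(x)$ is continuous for each fixed $x$, so that $F(\cdot,X)\in L^1_{\rm loc}$. The dependence on $\lambda(t)$ is harmless. For the dependence on $\gamma$, I would use the resolvent identity
\[
J_{\gamma A}y=J_{\mu A}\Bigl(\tfrac{\mu}{\gamma}y+\bigl(1-\tfrac{\mu}{\gamma}\bigr)J_{\gamma A}y\Bigr),
\]
which follows purely algebraically from the definition once resolvents are single-valued and everywhere defined; this holds by Proposition~\ref{prop:resolvent_single_valued} and (H1), since $\gamma,\mu\ge\underline\gamma>\max\{0,-2\rho\}$. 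Combined with the nonexpansiveness of $J_{\mu A}$ (Proposition~\ref{prop:max_comonotone_properties}), it gives
\[
\|J_{\gamma A}y-J_{\mu A}y\|\le \frac{|\gamma-\mu|}{\gamma}\,\|y-J_{\gamma A}y\|.
\]
The factor $\|y-J_{\gamma A}y\|$ is bounded locally uniformly in $\gamma\in[\underline\gamma,\bar\gamma]$, by comparison with the fixed point $\bar x$ of the forward-backward map. Together with the $\tfrac1\nu$-Lipschitz continuity of $B$, this shows that $(\gamma,x)\mapsto J_{\gamma A}(x-\gamma Bx)$ is locally Lipschitz in $\gamma$, uniformly on bounded sets of $x$. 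Since $\gamma,\lambda\in C^1$, the map $t\mapsto T_t(x)$ is continuous, and indeed locally Lipschitz.

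The same estimate makes $t\mapsto T_t(x(t))$ locally absolutely continuous along the solution. Hence $\dot x=-\beta T_t(x)-z$ is locally absolutely continuous, because $\beta$ and $z$ are. This is exactly what is needed to differentiate the first equation in the converse direction of Theorem~\ref{thm:First-order formulation} and to meet requirement (i) of Definition~\ref{def:strong_global_solution}.
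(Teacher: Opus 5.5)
Your proposal is correct and follows the same route as the paper: you reduce to the first-order system of Theorem~\ref{thm:First-order formulation} on $\mathcal{H}\times\mathcal{H}$, then verify the Lipschitz and linear-growth hypotheses of Lemma~\ref{cauchy-lipschitz} using $\inf_{t}\Lambda(t)>0$ and $T_{\lambda(t),\gamma(t)}(\bar x)=0$. You also prove continuity of $t\mapsto T_{\lambda(t),\gamma(t)}(x)$ via the resolvent identity (used in the paper only later, in Lemma~\ref{lem:sharp_sensitivity_final}(i)) and absolute continuity of $t\mapsto T_{\lambda(t),\gamma(t)}(x(t))$, which fill in the measurability of $F(\cdot,X)$ and the regularity needed for the converse of the equivalence, both of which the paper's proof passes over.
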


\begin{proof}
By Theorem \ref{thm:First-order formulation}, the second order Cauchy problem \eqref{eq:SHDS_second_order} is equivalent to the following first-order system:
\begin{equation}
\label{eq:first-order_system_SHDS}
\begin{cases}
\dot{x}(t) + \beta(t)T_{\lambda(t), \gamma(t)}(x(t)) + z(t) = 0, \\
\dot{z}(t) = \alpha(t)\dot{x}(t) + \big(b(t) - \dot{\beta}(t)\big)T_{\lambda(t), \gamma(t)}(x(t)), \\
x(t_0) = x_0, \,\,\, z(t_0) = -u_0 - \beta(t_0)T_{\lambda(t_0), \gamma(t_0)}(x_0).
\end{cases}
\end{equation}
By substituting $\dot{x}(t) = -z(t) - \beta(t)T_{\lambda(t), \gamma(t)}(x(t))$ into the second equation of \eqref{eq:first-order_system_SHDS}, we define the state variable $X(t) = (x(t), z(t))$ and the associated vector field $F: [t_0, +\infty) \times \mathcal{H} \times \mathcal{H} \to \mathcal{H} \times \mathcal{H}$ as follows:
\begin{equation}
\label{eq:vector_field_F_SHDS}
F(t, x, z) := \left( -z - \beta(t)T_{\lambda(t), \gamma(t)}(x), \,\, -\alpha(t)z + \big(b(t) - \dot{\beta}(t) - \alpha(t)\beta(t)\big)T_{\lambda(t), \gamma(t)}(x) \right).
\end{equation}
Then, the system \eqref{eq:first-order_system_SHDS} can be reformulated as the Cauchy problem:
\begin{equation*}
\begin{cases}
\dot{X}(t) = F(t, X(t)),\\
X(t_0) = X_0 := \left( x_0, \,-u_0 - \beta(t_0)T_{\lambda(t_0), \gamma(t_0)}(x_0) \right).
\end{cases}
\end{equation*}

Now, we verify that the vector field $F$ satisfies the three conditions of Lemma \ref{cauchy-lipschitz}.

\noindent \textbf{Local Lipschitz Continuity.} 
For any $(x, z), (x', z') \in \mathcal{H} \times \mathcal{H}$ and fixed $t \geq t_0$, by the definition of the vector field $F$ in \eqref{eq:vector_field_F_SHDS} and the inequality $\|u + v\|_{\mathcal{H}}^2 \leq 2\|u\|_{\mathcal{H}}^2 + 2\|v\|_{\mathcal{H}}^2$ for any $u, v \in \mathcal{H}$, it follows that:
\begin{align*}
& \|F(t, x, z) - F(t, x', z')\|_{\mathcal{H} \times \mathcal{H}}^2 \\
& \quad = \left\| -(z - z') - \beta(t) \big(T_{\lambda(t), \gamma(t)}(x) - T_{\lambda(t), \gamma(t)}(x')\big) \right\|_{\mathcal{H}}^2 \\
& \quad \quad + \left\| -\alpha(t)(z - z') + \left( b(t) - \dot{\beta}(t) - \alpha(t)\beta(t) \right) \big(T_{\lambda(t), \gamma(t)}(x) - T_{\lambda(t), \gamma(t)}(x')\big) \right\|_{\mathcal{H}}^2 \\
& \quad \le \left( 2 + 2\alpha^2(t) \right) \|z - z'\|_{\mathcal{H}}^2 \\
& \quad \quad + 2 \left( \beta^2(t) + \left( b(t) - \dot{\beta}(t) - \alpha(t)\beta(t) \right)^2 \right) \left\|T_{\lambda(t), \gamma(t)}(x) - T_{\lambda(t), \gamma(t)}(x')\right\|_{\mathcal{H}}^2.
\end{align*}
Setting $\underline{\Lambda} := \inf_{t \geq t_0} \Lambda(t) > 0$. By the $\frac{1}{\Lambda(t)}$-Lipschitz continuity of $T_{\lambda(t), \gamma(t)}$ on $\mathcal{H}$, we have
\begin{align*}
\|F(t, x, z) - F(t, x', z')\|_{\mathcal{H} \times \mathcal{H}}^2 &\le C_1(t) \|z - z'\|_{\mathcal{H}}^2 + C_2(t) \frac{1}{\underline{\Lambda}^2} \|x - x'\|_{\mathcal{H}}^2 \\
&\le L_F^2(t) \left( \|x - x'\|_{\mathcal{H}}^2 + \|z - z'\|_{\mathcal{H}}^2 \right) \\
&= L_F^2(t) \|(x, z) - (x', z')\|_{\mathcal{H} \times \mathcal{H}}^2,
\end{align*}
where the time-dependent coefficients are defined as
\begin{equation*}
\left\{
\begin{aligned}
C_1(t) &:= 2 + 2\alpha^2(t), \\
C_2(t) &:= 2\beta^2(t) + 2\left( b(t) - \dot{\beta}(t) - \alpha(t)\beta(t) \right)^2,
\end{aligned}
\right.
\end{equation*}
and the Lipschitz modulus of the vector field is $L_F(t) = \sqrt{\max\left( C_1(t), \frac{1}{\underline{\Lambda}^2} C_2(t) \right)}$. Under Assumption {\bf (H3)}, all component functions of $L_F(t)$ are locally integrable, which ensures that $L_F(t) \in L^1_{\mathrm{loc}}([t_0, +\infty))$.

\noindent \textbf{Linear Growth and Local Integrability.} 
For any state $(x, z) \in \mathcal{H} \times \mathcal{H}$, utilizing the property $T_{\lambda(t), \gamma(t)}(\bar{x}) = 0$ and triangle inequality $\|x - \bar{x}\|_{\mathcal{H}} \le \|x\|_{\mathcal{H}} + \|\bar{x}\|_{\mathcal{H}}$, the vector field \eqref{eq:vector_field_F_SHDS} satisfies:
\begin{align*}
\|F(t, x, z)\|_{\mathcal{H} \times \mathcal{H}} &= \left\| -z - \beta(t)T_{\lambda(t), \gamma(t)}(x) \right\|_{\mathcal{H}} + \left\| -\alpha(t)z + \left( b(t) - \dot{\beta}(t) - \alpha(t)\beta(t) \right) T_{\lambda(t), \gamma(t)}(x) \right\|_{\mathcal{H}} \\
&\le \left( 1 + |\alpha(t)| \right) \|z\|_{\mathcal{H}} + \left( |\beta(t)| + \left| b(t) - \dot{\beta}(t) - \alpha(t)\beta(t) \right| \right) \left\| T_{\lambda(t), \gamma(t)}(x) \right\|_{\mathcal{H}} \\
&\le \left( 1 + |\alpha(t)| \right) \|z\|_{\mathcal{H}} + \left( |\beta(t)| + \left| b(t) - \dot{\beta}(t) - \alpha(t)\beta(t) \right| \right) L(t) \left( \|x\|_{\mathcal{H}} + \|\bar{x}\|_{\mathcal{H}} \right) \\
&\le P(t) \left( 1 + \|x\|_{\mathcal{H}} + \|z\|_{\mathcal{H}} \right) \\
&= P(t) \left( 1 + \|(x, z)\|_{\mathcal{H} \times \mathcal{H}} \right),
\end{align*}
where the coefficient is defined as:
\begin{equation*}
P(t) := \max \left( 1 + |\alpha(t)|, \;\; \left( |\beta(t)| + \left| b(t) - \dot{\beta}(t) - \alpha(t)\beta(t) \right| \right) L(t) \left( 1 + \|\bar{x}\|_{\mathcal{H}} \right) \right).
\end{equation*}
Since $P(t) \in L^1_{\mathrm{loc}}([t_0, +\infty))$ by Assumptions {\bf (H1)} and {\bf (H3)}, the linear growth bound ensures that $F(\cdot, x, z) \in L^1_{\mathrm{loc}}([t_0, +\infty), \mathcal{H} \times \mathcal{H})$ for any fixed state.

\noindent \textbf{Unique strong solution.} Since all three conditions of Lemma \ref{cauchy-lipschitz} are satisfied, there exists a unique global solution to the Cauchy problem associated with the system \eqref{eq:SHDS} according to Theorem \ref{thm:First-order formulation}.
\end{proof}

\section{Asymptotic Approximation of the Trajectories}
\label{sec:asymptotic}
In this section, we investigate the asymptotic properties of the trajectories generated by system \eqref{eq:SHDS}. Before establishing the convergence results, we provide some technical lemmas that quantify the sensitivity of the splitting operator with respect to its parameters.

\begin{lemma}
\label{prop:omega_uniform_positivity}
Under Assumption {\bf (H1)}, the function 
$\Omega(t) := \Omega(\gamma(t)) = \frac{4\nu(\rho + \gamma(t)) - \gamma^2(t)}{4\gamma(t)(\nu + \rho)}$ 
is bounded away from zero on $[t_0, +\infty)$. That is, there exists a constant $\underline{\Omega} > 0$ such that $\Omega(t) \ge \underline{\Omega}$ for all $t \ge t_0$.
\end{lemma}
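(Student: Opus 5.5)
Under (H1) the values $\gamma(t)$ stay in the compact interval $[\underline{\gamma},\bar{\gamma}]\subset(\max\{0,-2\rho\},2\nu)$. It therefore suffices to show that the scalar function
\[
\Omega(\gamma)=\frac{4\nu(\rho+\gamma)-\gamma^2}{4\gamma(\nu+\rho)}
\]
is continuous and strictly positive on this interval. The extreme value theorem then gives $\underline{\Omega}:=\min_{\gamma\in[\underline{\gamma},\bar{\gamma}]}\Omega(\gamma)>0$, and $\Omega(t)=\Omega(\gamma(t))\ge\underline{\Omega}$ for all $t\ge t_0$.

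\textbf{Continuity.} The denominator $4\gamma(\nu+\rho)$ must not vanish on $[\underline{\gamma},\bar{\gamma}]$. Since $\underline{\gamma}>0$, we need $\nu+\rho>0$, and this follows from nonemptiness of the admissible interval. Indeed, $\max\{0,-2\rho\}<2\nu$ forces $-2\rho<2\nu$, i.e.\ $\rho>-\nu$. Hence the denominator is positive, and $\Omega$ is a rational function with nonvanishing denominator, so it is continuous on the compact interval.

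\textbf{Positivity.} It remains to check that the numerator $N(\gamma)=4\nu(\rho+\gamma)-\gamma^2$ is positive there. The clean route is to reuse the computation in Lemma~\ref{thm:coco_T_final}: there $\Omega(\gamma)=1/(2\alpha')$, where $\alpha'\in(0,1)$ is the averagedness constant of $J_{\gamma A}\circ(\operatorname{Id}-\gamma B)$. For this, $\alpha_1=\gamma/(2(\rho+\gamma))\in(0,1)$ because $\gamma>-2\rho$ and $\gamma>0$, and $\alpha_2=\gamma/(2\nu)\in(0,1)$ because $\gamma<2\nu$. Then $1-\alpha_1\alpha_2>0$ and $\alpha'>0$, so $\Omega(\gamma)>0$.

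As a direct sanity check, $N$ is a concave quadratic in $\gamma$, so it suffices to verify positivity at the endpoints of $[\max\{0,-2\rho\},2\nu]$. At $\gamma=2\nu$ we get $N=4\nu\rho+4\nu^2=4\nu(\nu+\rho)>0$. At $\gamma=0$, which is the relevant endpoint when $\rho\ge0$, we get $N=4\nu\rho\ge0$. At $\gamma=-2\rho$, relevant when $\rho<0$, we get $N=-4\nu\rho-4\rho^2=-4\rho(\nu+\rho)>0$. By concavity, $N>0$ on the open interval, and in particular on the compact subinterval $[\underline{\gamma},\bar{\gamma}]$.

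\textbf{Main obstacle.} The only delicate point is the boundary case $\rho=0$ with $\gamma\to0$, where the numerator degenerates. It causes no trouble, because $\underline{\gamma}>0$ keeps $\gamma$ strictly inside the interval. Everything else is the elementary compactness argument above.
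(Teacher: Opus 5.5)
Your proof is correct and follows the paper's own argument. You show $\Omega>0$ on $(\max\{0,-2\rho\},2\nu)$ via $\Omega=1/(2\alpha')$ from Lemma~\ref{thm:coco_T_final}, then apply compactness of $[\underline{\gamma},\bar{\gamma}]$ and the extreme value theorem; your checks that $\nu+\rho>0$ and on the concave numerator only fill in details the paper leaves implicit. As a side remark, since also $\alpha'<1$, one has $\Omega(\gamma)-\tfrac12=\frac{(2\nu-\gamma)(\gamma+2\rho)}{4\gamma(\nu+\rho)}>0$ on the whole open interval, so $\underline{\Omega}=\tfrac12$ works without any compactness argument.
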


\begin{proof}
As established in Lemma \ref{thm:coco_T_final},  the mapping $\gamma\mapsto\Omega(\gamma)=\frac{4\nu \left( \rho + \gamma \right) - \gamma^2}{4\gamma \left( \nu + \rho \right)}$ is positive for all $\gamma \in I := \left( \max\{0, -2\rho\}, \, 2\nu \right)$. Furthermore, by Assumption {\rm {\bf (H1)}}, the parameter $\gamma(t)$ is confined within a compact sub-interval $K := [\underline{\gamma}, \bar{\gamma}] \subset I$ for all $t \geq t_0$. According to the Extreme Value Theorem, the continuous function $\Omega(t)$ restricted to the compact set $K$ attains a global minimum at some point $\gamma^* \in K$. Since $\gamma^* \in K \subset I$, the function value $\Omega(\gamma^*)$ is positive. Thus, there exists a uniform constant $\inf_{t \geq t_0} \Omega(t) = \Omega(\gamma^*) =: \underline{\Omega} > 0$ such that $\Omega(t) \ge \underline{\Omega}$ for all $t \ge t_0$. 
\end{proof}

\begin{lemma}
\label{lem:sharp_sensitivity_final}
Under Assumption {\rm {\bf (H1)}}, let $A: \mathcal{H} \to 2^{\mathcal{H}}$ be a maximal $\rho$-comonotone operator and $B: \mathcal{H} \to \mathcal{H}$ be a $\nu$-cocoercive operator with $\nu > 0$. For any $\lambda_1, \lambda_2 > 0$ and $\gamma_1, \gamma_2 \in \left( \max(0, -2\rho), \, 2\nu \right)$, let $\Lambda(t) := \lambda(t) \Omega(t) > 0$ be the cocoercivity constant of $T_{\lambda(t), \gamma(t)}$ with $\Omega(t) := \frac{4\nu \left( \rho + \gamma(t) \right) - \gamma^2(t)}{4\gamma(t) \left( \nu + \rho \right)}$. For any $\bar{x} \in \operatorname{zer}(A+B)$ and $x, y, z \in \mathcal{H}$, the following assertions hold:

{\rm (i)} The resolvent operator satisfies the following inequality:
\begin{equation}\label{eq:resolvent_sensitivity_standard}
\| J_{\gamma_1 A} z - J_{\gamma_2 A} z \|_{\mathcal{H}} \leq \frac{|\gamma_1 - \gamma_2|}{\gamma_1} \| z - J_{\gamma_1 A} z \|_{\mathcal{H}}.
\end{equation}

{\rm (ii)} The difference of the operators satisfies:
\begin{align}
\|\lambda_{1} T_{\lambda_{1}, \gamma_{1}}(x) - \lambda_{2} T_{\lambda_{2}, \gamma_{2}}(y)\|_{\mathcal{H}} \leq \;& 4 \left( \frac{\lambda_1}{\Lambda_1} + 4 \right)\frac{\nu}{\gamma_1} \|x - y\|_{\mathcal{H}} + \frac{2 |\gamma_{1} - \gamma_{2}|}{\gamma_{1}} \| x - \bar{x} \|_{\mathcal{H}} \nonumber \\
& + \frac{8\nu |\gamma_{1} - \gamma_{2}|}{\gamma_{1}} \|B(x)\|_{\mathcal{H}}, \label{eq:asymptotic_consistent_form}
\end{align}
where $\Lambda_1$ denotes $\Lambda(t_1)$ evaluated at parameters $(\lambda_1, \gamma_1)$.

{\rm (iii)} For any strong global solution $x(t)$ of system \eqref{eq:SHDS}, the derivative of the operator satisfies for almost every $t \ge t_0$:
\begin{equation}\label{eq:final_CL_derivative}
\left\| \frac{d}{dt} \left( \lambda(t) T_{\lambda(t), \gamma(t)}(x(t)) \right) \right\|_{\mathcal{H}} \leq 4 \left( \frac{1}{\underline{\Omega}} + 4 \right) \frac{\nu}{\gamma(t)} \|\dot{x}(t)\|_{\mathcal{H}} + \frac{2 |\dot{\gamma}(t)|}{\gamma(t)} \| x(t) - \bar{x} \|_{\mathcal{H}} + \frac{8\nu |\dot{\gamma}(t)|}{\gamma(t)} \| B(x(t)) \|_{\mathcal{H}}.
\end{equation}
\end{lemma}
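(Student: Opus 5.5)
Part (i) is the resolvent identity adapted to the comonotone setting. Set $p_1 := J_{\gamma_1 A} z$ and $p_2 := J_{\gamma_2 A} z$; both are single-valued by Proposition~\ref{prop:resolvent_single_valued}, since $\gamma_i + \rho > 0$. Then $u_i := (z - p_i)/\gamma_i \in A p_i$. The $\rho$-comonotonicity inequality for the pairs $(p_1,u_1),(p_2,u_2)$ gives $\langle p_1 - p_2, u_1 - u_2\rangle \ge \rho\|u_1-u_2\|^2$. I would write $p_1 - p_2$ in terms of $u_1-u_2$ and $u_1$:
\[
p_1 - p_2 = -\gamma_2 (u_1-u_2) + (\gamma_2-\gamma_1) u_1 .
\]
Substituting this yields
\[
(\gamma_2+\rho)\|u_1-u_2\|^2 \le (\gamma_2-\gamma_1)\langle u_1,u_1-u_2\rangle .
\]
By Cauchy--Schwarz, $\|u_1-u_2\|\le \frac{|\gamma_1-\gamma_2|}{\gamma_2+\rho}\|u_1\|$. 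Then $\|p_1-p_2\| \le \gamma_2\|u_1-u_2\| + |\gamma_1-\gamma_2|\,\|u_1\|$. This must be compared with $\frac{|\gamma_1-\gamma_2|}{\gamma_1}\|z-p_1\| = |\gamma_1-\gamma_2|\,\|u_1\|$. The crude constant $1+\gamma_2/(\gamma_2+\rho)$ is worse than $1$ when $\rho<0$. So the step I expect to be the \emph{main obstacle} is getting exactly the constant $1$ in (i).

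First I would try to estimate $\|p_1-p_2\|^2$ directly by expanding $\|-\gamma_2 w + (\gamma_2-\gamma_1)u_1\|^2$, where $w := u_1-u_2$. I would use the inequality above to control the cross term, hoping that $\gamma_2>-2\rho$ (i.e.\ the averagedness regime of Proposition~\ref{prop:max_comonotone_properties}) makes the quadratic in $\|w\|$ nonpositive. If that fails, I would accept a constant depending on $\underline{\gamma}$ and $\rho$. This would only change the numerical constants in (ii)--(iii), not the structure of the argument.

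For (ii), write $\lambda_i T_{\lambda_i,\gamma_i} = \operatorname{Id} - J_{\gamma_i A}(\operatorname{Id}-\gamma_i B)$ and split the difference as
\[
\bigl[\lambda_1T_{\lambda_1,\gamma_1}x - \lambda_1T_{\lambda_1,\gamma_1}y\bigr] + \bigl[J_{\gamma_1A}(y-\gamma_1By) - J_{\gamma_2A}(y-\gamma_1 By)\bigr] + \bigl[J_{\gamma_2A}(y-\gamma_1By)-J_{\gamma_2A}(y-\gamma_2By)\bigr].
\]
\begin{itemize}
\item The first bracket is bounded by $(\lambda_1/\Lambda_1)\|x-y\|$, using the $\Lambda_1$-cocoercivity of Lemma~\ref{thm:coco_T_final}.
\item The third bracket is bounded by $|\gamma_1-\gamma_2|\,\|By\|$, since $J_{\gamma_2 A}$ is averaged, hence nonexpansive.
\item The second bracket is bounded by part (i) with $z = y-\gamma_1 By$, giving $\frac{|\gamma_1-\gamma_2|}{\gamma_1}\|\lambda_1 T_{\lambda_1,\gamma_1}(y)\|$.
\end{itemize}
Next, I would transfer everything from $y$ back to $x$. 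I use $\|\lambda_1T_{\lambda_1,\gamma_1}(y)\| \le \|\lambda_1T_{\lambda_1,\gamma_1}(y) - \lambda_1T_{\lambda_1,\gamma_1}(x)\| + \|\lambda_1T_{\lambda_1,\gamma_1}(x)\|$, together with $\lambda_1T_{\lambda_1,\gamma_1}(\bar x)=0$ and nonexpansiveness, which gives $\|\lambda_1 T_{\lambda_1,\gamma_1}x\|\le 2\|x-\bar x\|$. I also use $\|By\|\le\|Bx\|+\frac1\nu\|x-y\|$. The remaining $|\gamma_1-\gamma_2|$ factors multiplying $\|x-y\|$ are bounded by $2\nu$ because $\gamma_i<2\nu$. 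Since $\nu/\gamma_1 > 1/2$, all the $\|x-y\|$ coefficients can be absorbed generously into $4(\lambda_1/\Lambda_1+4)\nu/\gamma_1$. Likewise $|\gamma_1-\gamma_2| \le 8\nu|\gamma_1-\gamma_2|/\gamma_1$ for the $\|Bx\|$ term. This produces \eqref{eq:asymptotic_consistent_form}.

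For (iii), apply (ii) with $(\lambda_1,\gamma_1,x) = (\lambda(t),\gamma(t),x(t))$ and $(\lambda_2,\gamma_2,y) = (\lambda(t+h),\gamma(t+h),x(t+h))$. Divide by $h$ and let $h\to0$ at points where $x$ and $\gamma$ are differentiable. Note that $\lambda_1/\Lambda_1 = 1/\Omega(t)\le 1/\underline{\Omega}$ by Lemma~\ref{prop:omega_uniform_positivity}. The map $t\mapsto\lambda(t)T_{\lambda(t),\gamma(t)}(x(t))$ is locally Lipschitz, and hence differentiable a.e.\ (this is asserted rather than proved in the statement). So the limit of the difference quotient exists almost everywhere and inherits the bound \eqref{eq:final_CL_derivative}.
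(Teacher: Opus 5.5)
Your overall plan works and is in places tighter than the paper's. Two steps need attention: in (i) you left the key step open, and in (ii) one intermediate bound is wrong.

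\textbf{Part (i).} The step you call the main obstacle does close along your first line of attack. With $w=u_1-u_2$, multiply $(\gamma_2+\rho)\|w\|^2\le(\gamma_2-\gamma_1)\langle u_1,w\rangle$ by $-2\gamma_2<0$. Insert the result into the expansion of $\|{-\gamma_2 w}+(\gamma_2-\gamma_1)u_1\|^2$:
\[
\|p_1-p_2\|^2\le-\gamma_2(\gamma_2+2\rho)\|w\|^2+(\gamma_1-\gamma_2)^2\|u_1\|^2\le(\gamma_1-\gamma_2)^2\|u_1\|^2,
\]
where the last step uses $\gamma_2>\max\{0,-2\rho\}$. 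Since $|\gamma_1-\gamma_2|\,\|u_1\|=\frac{|\gamma_1-\gamma_2|}{\gamma_1}\|z-p_1\|$, this is exactly (i).

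Do not fall back on a larger constant. The coefficient $\frac{2|\gamma_1-\gamma_2|}{\gamma_1}$ of $\|x-\bar x\|$ in (ii) has no slack, so (ii) as stated needs constant $1$ in (i).

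The paper gets (i) in one line. It uses the resolvent identity $J_{\gamma_1A}z=J_{\gamma_2A}\bigl(z+(1-\tfrac{\gamma_2}{\gamma_1})(J_{\gamma_1A}z-z)\bigr)$, valid because $p_1+\gamma_2u_1\in(\operatorname{Id}+\gamma_2A)p_1$ and $J_{\gamma_2A}$ is single-valued. It then applies the nonexpansiveness of $J_{\gamma_2A}$ from Proposition~\ref{prop:max_comonotone_properties}(2), which you already use in (ii). Your computation is precisely the averagedness inequality of $J_{\gamma_2A}$ evaluated at the pair $(p_1+\gamma_2u_1,\,z)$.

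\textbf{Part (ii).} Your bound on the second bracket is false. For $z=y-\gamma_1By$ one has $z-J_{\gamma_1A}z=\lambda_1T_{\lambda_1,\gamma_1}(y)-\gamma_1By$, not $\lambda_1T_{\lambda_1,\gamma_1}(y)$. For instance, at $y=\bar x$ your bound would force the bracket to vanish. That fails for $A=\operatorname{Id}$, $B\equiv-c\neq0$ on $\mathbb{R}$ and $\gamma_1\neq\gamma_2$.

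Applied correctly, part (i) yields an extra $|\gamma_1-\gamma_2|\,\|By\|$. After transferring to $x$, the coefficients become:
\begin{itemize}
\item $\frac{\lambda_1}{\Lambda_1}\bigl(1+\frac{|\gamma_1-\gamma_2|}{\gamma_1}\bigr)+\frac{2|\gamma_1-\gamma_2|}{\nu}$ for $\|x-y\|$;
\item $\frac{2|\gamma_1-\gamma_2|}{\gamma_1}$ for $\|x-\bar x\|$;
\item $2|\gamma_1-\gamma_2|$ for $\|Bx\|$.
\end{itemize}
These still fit under the stated constants via $|\gamma_1-\gamma_2|<2\nu$ and $\nu/\gamma_1>1/2$, so (ii) survives. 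The last two brackets of your decomposition should also carry minus signs; this is harmless after the triangle inequality.

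Once repaired, your intermediate point $J_{\gamma_2A}(y-\gamma_1By)$ is more economical than the paper's $J_{\gamma_1A}(y-\gamma_2By)$. The paper applies (i) at $y-\gamma_2By$ and pays an extra $2\|z_1-z_2\|$, which produces its additional $2|\gamma_1-\gamma_2|^2/\gamma_1$ terms.

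\textbf{Part (iii).} Putting $(\lambda(t),\gamma(t),x(t))$ in the first slot is cleaner than the paper's choice of $x(t+h),\gamma(t+h)$ there. The paper's choice requires passing to the limit in the coefficients; yours does not. Your local-Lipschitz justification of a.e.\ differentiability, valid in a Hilbert space, also fills a point the paper takes for granted.
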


\begin{proof}
We first recall an algebraic identity that holds for any operator $A$ and parameters $\gamma_1, \gamma_2 > 0$:
\begin{equation}
\label{eq:resolvent_identity_proof}
J_{\gamma_1 A} z = J_{\gamma_2 A} \left( \frac{\gamma_2}{\gamma_1} z + \left( 1 - \frac{\gamma_2}{\gamma_1} \right) J_{\gamma_1 A} z \right), \quad \forall z \in \mathcal{H}.
\end{equation}
By utilizing the algebraic identity \eqref{eq:resolvent_identity_proof} and applying the non-expansiveness of the resolvent $J_{\gamma_2 A}$ ensured by Proposition \ref{prop:max_comonotone_properties} under Assumption {\bf (H1)}, we have
\begin{align}
\left\| J_{\gamma_1 A} z - J_{\gamma_2 A} z \right\|_{\mathcal{H}}
&\le \left\| \left( \frac{\gamma_2}{\gamma_1} z + \left( 1 - \frac{\gamma_2}{\gamma_1} \right) J_{\gamma_1 A} z \right) - z \right\|_{\mathcal{H}} \nonumber \\
&= \frac{|\gamma_1 - \gamma_2|}{\gamma_1} \left\| z - J_{\gamma_1 A} z \right\|_{\mathcal{H}},
\label{eq:resolvent_sensitivity_chain}
\end{align}
which completes assertion (i).

Furthermore, applying triangle inequality to the splitting operators gives
\begin{equation}
\label{eq:proof_T_split_final}
\|\lambda_1 T_{\lambda_1, \gamma_1}(x) - \lambda_2 T_{\lambda_2, \gamma_2}(y)\|_{\mathcal{H}} \leq \|\lambda_1 T_{\lambda_1, \gamma_1}(x) - \lambda_1 T_{\lambda_1, \gamma_1}(y)\|_{\mathcal{H}} + \|\lambda_1 T_{\lambda_1, \gamma_1}(y) - \lambda_2 T_{\lambda_2, \gamma_2}(y)\|_{\mathcal{H}}.
\end{equation}
By Assumption {\bf (H1)}, the splitting operator $T_{\lambda_1, \gamma_1}$ is $\Lambda_1$-cocoercive, which implies $\lambda_1 T_{\lambda_1, \gamma_1}$ is $\left( \frac{\lambda_1}{\Lambda_1} \right)$-Lipschitz continuous. Thus, the first term in \eqref{eq:proof_T_split_final} satisfies:
\begin{equation}\label{eq:proof_T_lip_term}
\|\lambda_1 T_{\lambda_1, \gamma_1}(x) - \lambda_1 T_{\lambda_1, \gamma_1}(y)\|_{\mathcal{H}} \leq \frac{\lambda_1}{\Lambda_1} \|x - y\|_{\mathcal{H}}.
\end{equation}

\noindent For the second term in \eqref{eq:proof_T_split_final}, we denote $z_{1} = y - \gamma_{1} B(y)$ and $z_{2} = y - \gamma_{2} B(y)$. Based on the definition of the splitting operator, the non-expansiveness of $J_{\gamma_1 A}$ and the resolvent identity \eqref{eq:resolvent_sensitivity_standard}, we have:
\begin{align}
\|\lambda_1 T_{\lambda_1, \gamma_1}(y) - \lambda_2 T_{\lambda_2, \gamma_2}(y)\|_{\mathcal{H}} &\leq \| J_{\gamma_1 A}(z_2) - J_{\gamma_1 A}(z_1) \|_{\mathcal{H}} + \| J_{\gamma_1 A}(z_2) - J_{\gamma_2 A}(z_2) \|_{\mathcal{H}} \nonumber \\
&\leq \| z_1 - z_2 \|_{\mathcal{H}} + \frac{|\gamma_1 - \gamma_2|}{\gamma_1} \| z_2 - J_{\gamma_1 A}(z_2) \|_{\mathcal{H}}.
\label{eq:proof_T_diff_step}
\end{align}
Utilizing the identities $z_{i} = y - \gamma_{i} B(y)$ and $\lambda_{1} T_{\lambda_{1}, \gamma_{1}}(y) = y - J_{\gamma_{1} A}(z_{1})$, it follows that
\begin{align}
\left\| z_{2} - J_{\gamma_{1} A}(z_{2}) \right\|_{\mathcal{H}} 
&\le \left\| \lambda_{1} T_{\lambda_{1}, \gamma_{1}}(y) \right\|_{\mathcal{H}} + \gamma_{1} \| B(y) \|_{\mathcal{H}} + 2 \| z_{1} - z_{2} \|_{\mathcal{H}} \nonumber \\
&= \left\| \lambda_{1} T_{\lambda_{1}, \gamma_{1}}(y) \right\|_{\mathcal{H}} + \left( \gamma_{1} + 2 \left| \gamma_{1} - \gamma_{2} \right| \right) \| B(y) \|_{\mathcal{H}}, \label{eq:residual_argument_chain}
\end{align}
where the first inequality follows from the triangle inequality and the non-expansiveness of $J_{\gamma_1 A}$ (i.e., $\| J_{\gamma_1 A}(z_1) - J_{\gamma_1 A}(z_2) \|_{\mathcal{H}} \le \| z_1 - z_2 \|_{\mathcal{H}}$), and the final equality utilizes the identity $\|z_1 - z_2\|_{\mathcal{H}} = |\gamma_1 - \gamma_2| \|B(y)\|_{\mathcal{H}}$.

\noindent By substituting \eqref{eq:residual_argument_chain} into \eqref{eq:proof_T_diff_step}, we obtain
\begin{align}
\|\lambda_1 T_{\lambda_1, \gamma_1}(y) - \lambda_2 T_{\lambda_2, \gamma_2}(y)\|_{\mathcal{H}} &\le \frac{|\gamma_1 - \gamma_2|}{\gamma_1} \left\| \lambda_1 T_{\lambda_1, \gamma_1}(y) \right\|_{\mathcal{H}} + \left( 2 + \frac{2|\gamma_1 - \gamma_2|}{\gamma_1} \right) |\gamma_1 - \gamma_2| \|B(y)\|_{\mathcal{H}} \nonumber \\
&\le \frac{|\gamma_1 - \gamma_2|}{\gamma_1} \left( \|\lambda_1 T_{\lambda_1, \gamma_1}(x)\|_{\mathcal{H}} + \frac{\lambda_1}{\Lambda_1} \|x - y\|_{\mathcal{H}} \right) \nonumber \\
&\quad + \left( 2 + \frac{2|\gamma_1 - \gamma_2|}{\gamma_1} \right) |\gamma_1 - \gamma_2| \left( \|B(x)\|_{\mathcal{H}} + \frac{1}{\nu} \|x - y\|_{\mathcal{H}} \right), \label{eq:y_residual_combined_chain}
\end{align}
where the second inequality follows from the Lipschitz continuity $\|B(y)\|_{\mathcal{H}} \leq \|B(x)\|_{\mathcal{H}} + \frac{1}{\nu} \|x - y\|_{\mathcal{H}}$ and $\|\lambda_1 T_{\lambda_1, \gamma_1}(y)\|_{\mathcal{H}} \leq \|\lambda_1 T_{\lambda_1, \gamma_1}(x)\|_{\mathcal{H}} + \frac{\lambda_1}{\Lambda_1} \|x - y\|_{\mathcal{H}}$.

\noindent Further, recalling that $\bar{x} \in \operatorname{zer}(A+B) \implies \lambda_1 T_{\lambda_1, \gamma_1}(\bar{x}) = 0$ and applying triangle inequality yields:
\begin{align}
\|\lambda_1 T_{\lambda_1, \gamma_1}(x)\|_{\mathcal{H}} 
&\le \| x - \bar{x} \|_{\mathcal{H}} + \left\| J_{\gamma_1 A} \left( x - \gamma_1 Bx \right) - J_{\gamma_1 A} \left( \bar{x} - \gamma_1 B\bar{x} \right) \right\|_{\mathcal{H}} \nonumber \\
&\le \| x - \bar{x} \|_{\mathcal{H}} + \left\| \left( x - \gamma_1 Bx \right) - \left( \bar{x} - \gamma_1 B\bar{x} \right) \right\|_{\mathcal{H}} \nonumber \\
&\le 2 \| x - \bar{x} \|_{\mathcal{H}}, \label{eq:pure_non_expansive_bound}
\end{align}
where the non-expansiveness of $J_{\gamma_1 A}$ and $\operatorname{Id} - \gamma_1 B$ is ensured by Proposition \ref{prop:max_comonotone_properties} (2) and Proposition \ref{prop:cocoercive_averaged}.

\noindent Substituting \eqref{eq:proof_T_lip_term}, \eqref{eq:y_residual_combined_chain}, and \eqref{eq:pure_non_expansive_bound} into \eqref{eq:proof_T_split_final} and rearranging the terms, it follows that:
\begin{align}
\|\lambda_{1} T_{\lambda_{1}, \gamma_{1}}(x) - \lambda_{2} T_{\lambda_{2}, \gamma_{2}}(y)\|_{\mathcal{H}} &\le \frac{\lambda_1}{\Lambda_1} \|x - y\|_{\mathcal{H}} + \left\| \lambda_{1} T_{\lambda_{1}, \gamma_{1}}(y) - \lambda_{2} T_{\lambda_{2}, \gamma_{2}}(y) \right\|_{\mathcal{H}} \nonumber \\
&\le \underbrace{\left( \frac{\lambda_1}{\Lambda_1} + \frac{\lambda_1}{\Lambda_1} \frac{|\gamma_1 - \gamma_2|}{\gamma_1} + \frac{2|\gamma_1 - \gamma_2|}{\nu} + \frac{2|\gamma_1 - \gamma_2|^2}{\gamma_1 \nu} \right)}_{:= \mathcal{C}_x} \|x - y\|_{\mathcal{H}} \nonumber \\
&\quad + \left( \frac{2 |\gamma_1 - \gamma_2|}{\gamma_1} \right) \|x - \bar{x}\|_{\mathcal{H}} + \underbrace{\left( 2|\gamma_1 - \gamma_2| + \frac{2 |\gamma_1 - \gamma_2|^2}{\gamma_1} \right)}_{:= \mathcal{C}_B} \|B(x)\|_{\mathcal{H}}. \label{eq:full_sensitivity_chain}
\end{align}
By imposing the constraints $\gamma_{i} \in (0, 2\nu)$, which imply $|\gamma_{1} - \gamma_{2}| < 2\nu$ and $1 < 2\nu/\gamma_{i}$, the coefficients $C_{B}$ and $C_{x}$ admit uniform upper bounds. Noting that $2\gamma_1 + 2|\gamma_1 - \gamma_2| < 8\nu$, we obtain:
\begin{equation}\label{eq:coeff_bounds}
C_{B} < \frac{8\nu |\gamma_{1} - \gamma_{2}|}{\gamma_{1}}, \quad \text{and} \quad C_{x} < \frac{4\nu}{\gamma_{1}} \frac{\lambda_{1}}{\Lambda_{1}} + \frac{8 |\gamma_{1} - \gamma_{2}|}{\gamma_{1}} \le 4 \left( \frac{\lambda_{1}}{\Lambda_{1}} + 4 \right) \frac{\nu}{\gamma_{1}}.
\end{equation}
Substituting \eqref{eq:coeff_bounds} into \eqref{eq:full_sensitivity_chain} validates the desired result (ii).

To establish assertion (iii), we consider the time derivative of the operator defined as the limit of its difference quotient. By substituting $x = x(t+h)$, $y = x(t)$, $\gamma_1 = \gamma(t+h)$, and $\gamma_2 = \gamma(t)$ into \eqref{eq:asymptotic_consistent_form}, we formulate the following estimates for almost every $t \geq t_{0}$:
\begin{align*}
\left\| \frac{d}{dt} \left( \lambda(t) T_{\lambda(t), \gamma(t)}(x(t)) \right) \right\|_{\mathcal{H}} &= \lim_{h \to 0^+} \frac{1}{h} \left\| \lambda(t+h) T_{\lambda(t+h), \gamma(t+h)}(x(t+h)) - \lambda(t) T_{\lambda(t), \gamma(t)}(x(t)) \right\|_{\mathcal{H}} \nonumber \\
&\le \lim_{h \to 0^+} \Bigg[ 4 \left( \frac{\lambda(t+h)}{\Lambda(t+h)} + 4 \right) \frac{\nu}{\gamma(t+h)} \frac{\| x(t+h) - x(t) \|_{\mathcal{H}}}{h} \nonumber \\
&\quad + \frac{2}{\gamma(t+h)} \frac{| \gamma(t+h) - \gamma(t) |}{h} \| x(t+h) - \bar{x} \|_{\mathcal{H}} \nonumber \\
&\quad + \frac{8\nu}{\gamma(t+h)} \frac{| \gamma(t+h) - \gamma(t) |}{h} \| B(x(t+h)) \|_{\mathcal{H}} \Bigg]\nonumber \\
&= 4 \left( \frac{\lambda(t)}{\Lambda(t)} + 4 \right) \frac{\nu}{\gamma(t)} \|\dot{x}(t)\|_{\mathcal{H}} + \frac{2 |\dot{\gamma}(t)|}{\gamma(t)} \| x(t) - \bar{x} \|_{\mathcal{H}} + \frac{8\nu |\dot{\gamma}(t)|}{\gamma(t)} \| B(x(t)) \|_{\mathcal{H}} \nonumber \\
&\le 4 \left( \frac{1}{\underline{\Omega}} + 4 \right) \frac{\nu}{\gamma(t)} \|\dot{x}(t)\|_{\mathcal{H}} + \frac{2 |\dot{\gamma}(t)|}{\gamma(t)} \| x(t) - \bar{x} \|_{\mathcal{H}} + \frac{8\nu |\dot{\gamma}(t)|}{\gamma(t)} \| B(x(t)) \|_{\mathcal{H}},
\end{align*}
which establishes assertion (iii).
\end{proof}

\subsection{Prior Estimate of Lyapunov Functional}
For any $\bar{x} \in \operatorname{zer}(A+B)$ and a strong global solution $x(t)$ of system \eqref{eq:SHDS}, we introduce the generalized energy functional $\mathcal{E}(t):$
\begin{equation}
\label{eq:energy_functional_01}
\mathcal{E}(t) := \frac{1}{2} \left\| a(t) \left( x(t) - \bar{x} \right) + m(t) v(t) \right\|_{\mathcal{H}}^2 + \frac{c(t)}{2} \|x(t) - \bar{x}\|_{\mathcal{H}}^2,
\end{equation}
where $v(t) := \dot{x}(t) + \beta(t) T_{\lambda(t), \gamma(t)}(x(t))$, and $a(t), m(t), c(t)$ are positive continuous and differentiable functions on $[t_0, +\infty)$ to be appropriately adjusted.
\vskip 1mm
Also, we define auxiliary functions: $\mathcal{H}(t) := b(t) - \dot{\beta}(t) - \alpha(t)\beta(t)$, $\mathcal{Q}_1(t) := a(t) \left( a(t)\beta(t) + m(t)\mathcal{H}(t) \right) + c(t)\beta(t)$, $\mathcal{Q}_2(t) := m(t) \left( a(t)\beta(t) + m(t)\mathcal{H}(t) \right)$, $\mathcal{W}_v(t) := -m(t) \left( a(t) + \dot{m}(t) - m(t)\alpha(t) \right)$, and $\mathcal{W}_T(t) := \mathcal{Q}_1(t)\Lambda(t) - \frac{\mathcal{Q}_2^2(t)}{2 \mathcal{W}_v(t)}$.

\begin{theorem}
\label{thm:m_weight_bounds}
Assume that Assumptions {\rm {\bf (H1)-(H3)}} hold and let $x(\cdot):[t_0, +\infty) \to \mathcal{H}$ be a solution trajectory of system \eqref{eq:SHDS}. Suppose that the following conditions are satisfied for almost every $t \geq t_0$:
\begin{equation*}
\begin{aligned}
    &\text{(C1)} \quad m(t) \dot{a}(t) + a(t) \left( a(t) + \dot{m}(t) - m(t) \alpha(t) \right) + c(t) = 0; \quad \text{(C2)} \quad \mathcal{W}_v(t) > 0, \, \mathcal{W}_T(t) > 0;\\ \quad 
    &\text{(C3)} \quad a(t)\dot{a}(t) + \frac{1}{2}\dot{c}(t) \le 0; \quad
    \text{(C4)} \quad \inf_{t \ge t_0} c(t) = \underline{c} > 0.
\end{aligned}
\end{equation*}
Then, the functional $\mathcal{E}(t)$ is non-increasing on $[t_0, +\infty)$ and the following properties hold:
\begin{enumerate}[label=(\roman*)]
    \item $\sup_{t \geq t_0} \|x(t) - \bar{x}\|_{\mathcal{H}}< +\infty$;
    \item $\sup_{t \geq t_0} \left\| a(t) \left( x(t) - \bar{x} \right) + m(t) \left( \dot{x}(t) + \beta(t) T_{\lambda(t), \gamma(t)}(x(t)) \right) \right\|_{\mathcal{H}} < +\infty$;
    \item $\int_{t_0}^{+\infty} \mathcal{W}_T(t) \| T_{\lambda(t), \gamma(t)}(x(t)) \|_{\mathcal{H}}^2 dt < +\infty$;
    \item $\int_{t_0}^{+\infty} \mathcal{W}_v(t) \| \dot{x}(t) + \beta(t) T_{\lambda(t), \gamma(t)}(x(t)) \|_{\mathcal{H}}^2 dt < +\infty$.
\end{enumerate}
\end{theorem}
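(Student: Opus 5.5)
Differentiate $\mathcal{E}$ along the trajectory and show that $\dot{\mathcal{E}}(t)\le -\tfrac12\mathcal{W}_T(t)\|T(t)\|^2-\tfrac12\mathcal{W}_v(t)\|v(t)\|^2$ for a.e.\ $t$, with $T(t):=T_{\lambda(t),\gamma(t)}(x(t))$. Everything we need is in the first-order formulation of Theorem~\ref{thm:First-order formulation}: $v(t)=\dot x(t)+\beta(t)T(t)=-z(t)$ and $\dot v(t)=-\dot z(t)=-\alpha(t)\dot x(t)-(b(t)-\dot\beta(t))T(t)$. Substituting $\dot x=v-\beta T$ gives
\[
\dot v=-\alpha v+\bigl(\alpha\beta-b+\dot\beta\bigr)T=-\alpha v-\mathcal{H}T.
\]
Both $\mathcal{E}$ and $v$ are locally absolutely continuous by Definition~\ref{def:strong_global_solution}, so the derivative makes sense a.e. Write $w:=x-\bar x$ and $u:=aw+mv$. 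Then
\[
\dot u=\dot a w+a(v-\beta T)+\dot m v+m(-\alpha v-\mathcal{H}T)=\dot a w+(a+\dot m-m\alpha)v-(a\beta+m\mathcal{H})T .
\]

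Next, expand $\dot{\mathcal{E}}=\langle u,\dot u\rangle+\tfrac{\dot c}{2}\|w\|^2+c\langle w,v-\beta T\rangle$ and group the result by the bilinear forms $\|w\|^2$, $\langle w,v\rangle$, $\langle w,T\rangle$, $\|v\|^2$, $\langle v,T\rangle$. The coefficients are:
\begin{itemize}
\item $\|w\|^2$: $a\dot a+\tfrac12\dot c$, which is $\le0$ by (C3);
\item $\langle w,v\rangle$: $m\dot a+a(a+\dot m-m\alpha)+c$, which vanishes by (C1);
\item $\langle w,T\rangle$: $-a(a\beta+m\mathcal{H})-c\beta=-\mathcal{Q}_1$;
\item $\|v\|^2$: $m(a+\dot m-m\alpha)=-\mathcal{W}_v$;
\item $\langle v,T\rangle$: $-m(a\beta+m\mathcal{H})=-\mathcal{Q}_2$.
\end{itemize}
Hence
\[
\dot{\mathcal{E}}\le -\mathcal{Q}_1\langle w,T\rangle-\mathcal{W}_v\|v\|^2-\mathcal{Q}_2\langle v,T\rangle.
\]

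The key step is the cross term in $\langle w,T\rangle$. By (H2), $T_{\lambda,\gamma}(\bar x)=0$, and Lemma~\ref{thm:coco_T_final} gives $\Lambda$-cocoercivity, so $\langle w,T\rangle\ge\Lambda\|T\|^2$. This yields $-\mathcal{Q}_1\langle w,T\rangle\le-\mathcal{Q}_1\Lambda\|T\|^2$, provided $\mathcal{Q}_1\ge0$. That sign does follow: (C2) gives $\mathcal{W}_T>0$ and $\mathcal{W}_v>0$, so $\mathcal{Q}_1\Lambda>\mathcal{Q}_2^2/(2\mathcal{W}_v)\ge0$. For the mixed term, Young's inequality gives
\[
|\mathcal{Q}_2\langle v,T\rangle|\le\tfrac{\mathcal{W}_v}{2}\|v\|^2+\tfrac{\mathcal{Q}_2^2}{2\mathcal{W}_v}\|T\|^2 .
\]
Combining these,
\[
\dot{\mathcal{E}}\le-\mathcal{W}_T\|T\|^2-\tfrac{\mathcal{W}_v}{2}\|v\|^2\le0 .
\]
This proves monotonicity. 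The factor $\tfrac12$ in the definition of $\mathcal{W}_T$ does not match a symmetric split exactly, but any split leaving positive multiples of $\mathcal{W}_T$ and $\mathcal{W}_v$ suffices for (iii) and (iv).

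The main obstacle is this bookkeeping: the $\langle w,v\rangle$ coefficient must cancel exactly via (C1), and (C2) must supply the sign of $\mathcal{Q}_1$. If the Young split produces different constants, I would instead weight it as $\varepsilon\mathcal{W}_v\|v\|^2+\frac{\mathcal{Q}_2^2}{4\varepsilon\mathcal{W}_v}\|T\|^2$ with $\varepsilon=\tfrac12$, matching the paper's $\mathcal{W}_T$ as defined.

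Finally, integrate from $t_0$ to $t$:
\[
\mathcal{E}(t)+\int_{t_0}^t\Bigl(\mathcal{W}_T\|T\|^2+\tfrac12\mathcal{W}_v\|v\|^2\Bigr)\,ds\le\mathcal{E}(t_0)<\infty .
\]
Since both terms of $\mathcal{E}$ are nonnegative, we get $\tfrac{c(t)}{2}\|w\|^2\le\mathcal{E}(t_0)$. Combined with (C4), this gives $\|w\|^2\le 2\mathcal{E}(t_0)/\underline c$, which is (i). The bound $\tfrac12\|u\|^2\le\mathcal{E}(t_0)$ gives (ii). Letting $t\to\infty$ in the integrals, which have nonnegative integrands, gives (iii) and (iv).
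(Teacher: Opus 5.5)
Your proposal is correct and follows essentially the same route as the paper. You differentiate $\mathcal{E}$ using $\dot v=-\alpha v-\mathcal{H}T$, cancel the $\langle x-\bar x,v\rangle$ term by (C1), drop the $\|x-\bar x\|^2$ term by (C3), apply $\Lambda$-cocoercivity (with $\mathcal{Q}_1>0$ deduced from (C2)) and Young's inequality with weight $\mathcal{W}_v$ to reach $\dot{\mathcal{E}}\le-\mathcal{W}_T\|T\|^2-\tfrac{\mathcal{W}_v}{2}\|v\|^2$, and then read off (i)--(iv).

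Your hedging about the factor $\tfrac12$ is unnecessary. The split $\tfrac{\mathcal{W}_v}{2}\|v\|^2+\tfrac{\mathcal{Q}_2^2}{2\mathcal{W}_v}\|T\|^2$ that you actually wrote already matches the definition of $\mathcal{W}_T$ exactly. For the same reason, the bound you derive is stronger than the $\tfrac12\mathcal{W}_T$ target you announced at the start.
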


\begin{proof}
Differentiating the Lyapunov function $\mathcal{E}(t)$ defined in \eqref{eq:energy_functional_01} yields
\begin{equation}\label{eq:proof_m_dotE}
\dot{\mathcal{E}}(t) = \left\langle a(t)(x(t) - \bar{x}) + m(t) v(t), \; \frac{d}{dt} \left( a(t)(x(t) - \bar{x}) + m(t) v(t) \right) \right\rangle_{\mathcal{H}} + \frac{1}{2}\dot{c}(t)\|x(t) - \bar{x}\|_{\mathcal{H}}^2 + c(t) \langle x(t) - \bar{x}, \dot{x}(t) \rangle_{\mathcal{H}}.
\end{equation}
Along the trajectories of system \eqref{eq:SHDS}, the time derivative of the composite vector field expands as:
\begin{align}
\frac{d}{dt} \left[ a(t) ( x(t) - \bar{x} ) + m(t) v(t) \right] &= \dot{a}(t) ( x(t) - \bar{x} ) + a(t)\dot{x}(t) + \dot{m}(t)v(t) + m(t)\dot{v}(t) \nonumber \\
&= \dot{a}(t) ( x(t) - \bar{x} ) + ( a(t) + \dot{m}(t) - m(t)\alpha(t) )v(t) \nonumber \\
&\quad - ( a(t)\beta(t) + m(t)\mathcal{H}(t) ) T_{\lambda(t), \gamma(t)}(x(t)). \label{eq:m_vector_field}
\end{align}
Substituting \eqref{eq:m_vector_field} and $\dot{x}(t) = v(t) - \beta(t)T_{\lambda(t), \gamma(t)}(x(t))$ into \eqref{eq:proof_m_dotE}, and regrouping terms, we obtain:
\begin{align}
\dot{\mathcal{E}}(t) =\;& \left( a(t)\dot{a}(t) + \frac{1}{2}\dot{c}(t) \right) \|x(t) - \bar{x}\|_{\mathcal{H}}^2 + m(t) \left( a(t) + \dot{m}(t) - m(t)\alpha(t) \right) \|v(t)\|_{\mathcal{H}}^2 \nonumber \\
& + \left( a(t) \left( a(t) + \dot{m}(t) - m(t)\alpha(t) \right) + m(t)\dot{a}(t) + c(t) \right) \langle x(t) - \bar{x}, v(t) \rangle_{\mathcal{H}} \nonumber \\
& - \mathcal{Q}_1(t) \langle x(t) - \bar{x}, T_{\lambda(t), \gamma(t)}(x(t)) \rangle_{\mathcal{H}} - \mathcal{Q}_2(t) \langle v(t), T_{\lambda(t), \gamma(t)}(x(t)) \rangle_{\mathcal{H}}. \label{eq:proof_m_expanded}
\end{align}
According to condition (C1), the term $\langle x(t) - \bar{x}, v(t) \rangle_{\mathcal{H}}$ vanishes. Assumption {\bf (H1)} establishes $\Lambda(t) > 0$, which, in conjunction with condition $\mathrm{(C2)}$, implies $\mathcal{Q}_1(t) > 0$. By virtue of $\Lambda(t)$-cocoercivity of $T_{\lambda(t), \gamma(t)}$, it holds that $\mathcal{Q}_1(t) \langle x(t) - \bar{x}, T_{\lambda(t), \gamma(t)}(x(t)) \rangle_{\mathcal{H}} \ge \mathcal{Q}_1(t) \Lambda(t)\|T_{\lambda(t), \gamma(t)}(x(t))\|_{\mathcal{H}}^2$. Additionally, employing Young's inequality for the remaining term, we have:
\begin{equation}\label{eq:m_young}
-\mathcal{Q}_2(t) \langle v(t), T_{\lambda(t), \gamma(t)}(x(t)) \rangle_{\mathcal{H}} \le \frac{\mathcal{W}_v(t)}{2} \|v(t)\|_{\mathcal{H}}^2 + \frac{\mathcal{Q}_2^2(t)}{2 \mathcal{W}_v(t)} \|T_{\lambda(t), \gamma(t)}(x(t))\|_{\mathcal{H}}^2.
\end{equation}
Substituting \eqref{eq:m_young} into \eqref{eq:proof_m_expanded} and using the definitions of $\mathcal{W}_v(t)$ and $\mathcal{W}_T(t)$, the derivative of the Lyapunov functional simplifies to:
\begin{align}
\dot{\mathcal{E}}(t) \le \;& \left( a(t)\dot{a}(t) + \frac{1}{2}\dot{c}(t) \right)\|x(t) - \bar{x}\|_{\mathcal{H}}^2 - \frac{\mathcal{W}_v(t)}{2}\|v(t)\|_{\mathcal{H}}^2 - \mathcal{W}_T(t)\|T_{\lambda(t), \gamma(t)}(x(t))\|_{\mathcal{H}}^2. \label{eq:proof_m_dotE_final}
\end{align}
Under conditions (C2) and (C3), the non-positivity of every term on the right-hand side of \eqref{eq:proof_m_dotE_final} ensures that $\dot{\mathcal{E}}(t) \le 0$ for almost every $t \ge t_0$, which yields $\left\| a(t) \left( x(t) - \bar{x} \right) + m(t) \left( \dot{x}(t) + \beta(t) T_{\lambda(t), \gamma(t)}(x(t)) \right) \right\|_{\mathcal{H}}^2 \le 2\mathcal{E}(t) \le 2\mathcal{E}(t_0)$. Given condition (C4), the boundedness of the trajectory $x(t)$ immediately follows from $\underline{c}\|x(t) - \bar{x}\|_{\mathcal{H}}^2 \le 2\mathcal{E}(t) \le 2\mathcal{E}(t_0)$. Furthermore, integrating the inequality \eqref{eq:proof_m_dotE_final} over $[t_0, +\infty)$ yields the desirable estimates in (iii) and (iv).
\end{proof}

\subsection{Asymptotic analysis and Improved Convergence Rates}
In this section, we present our main theoretical results concerning the long-term behavior of system \eqref{eq:SHDS}. The following theorem provides a unified characterization, ranging from qualitative trajectory regularity to accelerated convergence rates.

\begin{theorem}
\label{thm:pointwise_magnitudes_layered}
Suppose Assumptions {\bf (H1)--(H3)} and conditions {\rm (C1)--(C4)} hold. Let $x(t)$ be a solution trajectory of system \eqref{eq:SHDS}. As $t \to +\infty$, the following assertions hold:

\smallskip
\noindent \textbf{(1) Asymptotic Magnitudes.} If {\rm (C5)} $\bar{a} := \sup_{t \ge t_0} a(t) < +\infty$ and $\sup_{t \geq t_0} \frac{m(t) \beta(t)}{\lambda(t)} < +\infty$ are satisfied, then:
\begin{enumerate}[label=(\roman*), nosep]
    \item $\|T_{\lambda(t), \gamma(t)}(x(t))\|_{\mathcal{H}} = \mathcal{O}( \frac{1}{\lambda(t)})$;
    \item $\|\dot{x}(t)\|_{\mathcal{H}} = \mathcal{O}( \frac{1}{m(t)} )$.
\end{enumerate}

\smallskip
\noindent \textbf{(2) Operator Derivative Magnitude.} If, in addition to {\rm (C5)}, parameters fulfill {\rm (C6)} $0<\underline{m}' \leq \dot{m}(t) \leq \bar{m}'$, $m(t) \geq \dot{m}(t)$, $\underline{\gamma} := \inf_{t \geq t_0} \gamma(t) > 0$; and {\rm (C7)} $\frac{|\dot{\gamma}(t)|}{\gamma(t)}, \frac{|\dot{\lambda}(t)|}{\lambda(t)} = \mathcal{O}( \frac{\dot{m}(t)}{m(t)} )$, then:
\begin{enumerate}[label=(\roman*), resume, nosep]
    \item $\| \frac{d}{dt} T_{\lambda(t), \gamma(t)}(x(t)) \|_{\mathcal{H}} = \mathcal{O}\left( \frac{\dot{m}(t)}{m(t) \lambda(t)} \right)$.
\end{enumerate}

\smallskip
\noindent \textbf{(3) Accelerated Trajectory and Residual Rates.} Building upon conditions in \textbf{(1)} and \textbf{(2)}, if parameters satisfy {\rm (C8)} $\alpha(t) = \mathcal{O}( \frac{\dot{m}(t)}{m(t)} )$, $\beta(t) = \mathcal{O}( \frac{m(t)}{\dot{m}(t)} )$, $b(t) = \mathcal{O}(1)$, $b^2(t) \leq C_b \frac{\dot{m}(t) \mathcal{W}_T(t)}{m^3(t)}$; {\rm (C9)} $\lambda(t) \ge C_{\lambda} m^2(t)$; {\rm (C10)} $\mathcal{W}_v(t) \ge C_v m(t)\dot{m}(t)$, $\mathcal{W}_T(t) \ge C_T m^2(t)\dot{m}(t)$ for some $C_b, C_{\lambda}, C_v, C_T > 0$; and {\rm (C11)} $\liminf_{t \to +\infty} \frac{m(t) \mathcal{W}_T(t)}{\dot{m}(t) \lambda^2(t)}:=w > 0$, then:
\begin{enumerate}[label=(\roman*), resume, nosep, itemsep=2pt]
    \item $\|\ddot{x}(t)\|_{\mathcal{H}} = \mathcal{O}\left( \frac{\dot{m}(t)}{m^2(t)} \right)$;
    \item $\|\dot{x}(t)\|_{\mathcal{H}} = o\left( \frac{1}{m(t)} \right)$;
    \item $\|T_{\lambda(t), \gamma(t)}(x(t))\|_{\mathcal{H}} = o\left( \frac{1}{\lambda(t)} \right)$;
    \item $\| A_{\gamma(t)} ( x(t) - \gamma(t) Bx(t) ) + Bx(t) \|_{\mathcal{H}} = o\left( \frac{1}{\gamma(t)} \right)$;
    \item $\| \frac{d}{dt} ( A_{\gamma(t)} ( x(t) - \gamma(t) Bx(t) ) + Bx(t) ) \|_{\mathcal{H}} = \mathcal{O}\left( \frac{\dot{m}(t)}{m(t) \gamma(t)} \right) + o\left( \frac{\lambda(t) | \frac{d}{dt} ( \frac{\gamma(t)}{\lambda(t)} ) |}{\gamma^2(t)} \right)$.
\end{enumerate}
\end{theorem}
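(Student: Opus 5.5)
The plan is to extract everything from the bounded energy of Theorem~\ref{thm:m_weight_bounds} and then bootstrap through the equation \eqref{eq:SHDS} together with Lemma~\ref{lem:sharp_sensitivity_final}. Write $T(t):=T_{\lambda(t),\gamma(t)}(x(t))$ and $v(t)=\dot x(t)+\beta(t)T(t)$.

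\textbf{Part (1).} By Theorem~\ref{thm:m_weight_bounds}(i) the trajectory is bounded. Cocoercivity gives $\Lambda(t)\|T(t)\|\le\|x(t)-\bar x\|$, and $\Lambda=\lambda\Omega\ge\lambda\underline{\Omega}$ by Lemma~\ref{prop:omega_uniform_positivity}. Hence $\|T(t)\|=\mathcal O(1/\lambda(t))$, which is (i).

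For (ii), Theorem~\ref{thm:m_weight_bounds}(ii) and $\bar a<\infty$ give $m(t)\|v(t)\|\le C$. Then
\[
m\|\dot x\|\le m\|v\|+m\beta\|T\|\le C+\frac{m\beta}{\lambda}\,C',
\]
which is bounded by (C5).

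\textbf{Part (2).} I write $T=\frac{1}{\lambda}(\lambda T)$, so that
\[
\frac{d}{dt}T=-\frac{\dot\lambda}{\lambda^2}(\lambda T)+\frac{1}{\lambda}\frac{d}{dt}(\lambda T).
\]
The first term is bounded by $\frac{|\dot\lambda|}{\lambda}\cdot\frac{C}{\lambda}=\mathcal O\bigl(\frac{\dot m}{m\lambda}\bigr)$ by (C7) and the boundedness of $\lambda T$. The second term is handled by Lemma~\ref{lem:sharp_sensitivity_final}(iii):
\begin{itemize}
\item $\|\dot x\|=\mathcal O(1/m)$, and $1/m\le \dot m/(\underline m' m)$ by (C6), so this contribution is $\mathcal O(\dot m/m)$;
\item $x$ and $Bx$ are bounded, since $B$ is Lipschitz and $x$ is bounded;
\item $|\dot\gamma|/\gamma=\mathcal O(\dot m/m)$ by (C7), and $\gamma\ge\underline\gamma$.
\end{itemize}
Altogether the second term is $\mathcal O\bigl(\frac{\dot m}{m\lambda}\bigr)$, which is (iii).

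\textbf{Part (3).} For (iv), I read off $\ddot x=-\alpha\dot x-\beta\frac{d}{dt}T-bT$ and bound each term:
\begin{itemize}
\item $\alpha\|\dot x\|=\mathcal O(\dot m/m^2)$ by (C8) and (ii);
\item $\beta\|\frac{d}{dt}T\|=\mathcal O\bigl(\frac{m}{\dot m}\cdot\frac{\dot m}{m\lambda}\bigr)=\mathcal O(1/\lambda)=\mathcal O(1/m^2)$ by (C8), (iii) and (C9), and $1/m^2=\mathcal O(\dot m/m^2)$ since $\dot m\ge\underline m'$;
\item $b\|T\|=\mathcal O(1/\lambda)$ by (C8) and (i).
\end{itemize}
This gives (iv).

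For (v) and (vi), I apply Lemma~\ref{lem:weighted_decay_criterion} with $r=\dot m/m$. Its integral diverges, since $\int\dot m/m=\log m\to\infty$ because $m\ge \underline m'(t-t_0)+m(t_0)$.
\begin{itemize}
\item For (vi), take $F=\lambda T$. Condition (C11) gives $\frac{\dot m}{m}\lambda^2\|T\|^2\le w^{-1}\mathcal W_T\|T\|^2$ eventually, and this is integrable by Theorem~\ref{thm:m_weight_bounds}(iii). The derivative estimate from Part (2) gives $\|\dot F\|=\mathcal O(\dot m/m)$. Hence $\lambda T\to0$.
\item For (v), take $F=mv$. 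Condition (C10) gives $\frac{\dot m}{m}m^2\|v\|^2\le C_v^{-1}\mathcal W_v\|v\|^2$, integrable by Theorem~\ref{thm:m_weight_bounds}(iv). The derivative is $\dot F=\dot m v+m\dot v$ with $\dot v=-\alpha\dot x-bT$. Each piece should be $\mathcal O(\dot m/m)$, using $m\|v\|$ bounded, $m\alpha\|\dot x\|=\mathcal O(\dot m/m)$, and $mb\|T\|\le m|b|/\lambda$. For the last piece I use $b^2\le C_b\dot m\mathcal W_T/m^3$ only via $b=\mathcal O(1)$ together with (C9): $mb/\lambda=\mathcal O(1/m)=\mathcal O(\dot m/m)$. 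Then $mv\to0$, and $m\dot x=mv-m\beta T$.
\end{itemize}
Here is the main obstacle. I need $m\beta\|T\|=o(1)$, but (C5) only bounds $m\beta/\lambda$, and $m\beta\|T\|=\frac{m\beta}{\lambda}\cdot(\lambda\|T\|)$. Using (vi), this product is $o(1)$. So the order is (vi) first, then (v).

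For (vii), I use the identity $\lambda T=\gamma\bigl(A_\gamma(x-\gamma Bx)+Bx\bigr)$, which follows from $x-J_{\gamma A}(x-\gamma Bx)=\gamma Bx+\gamma A_\gamma(x-\gamma Bx)$. It gives $\|A_\gamma(\cdot)+Bx\|=\lambda\|T\|/\gamma=o(1/\gamma)$.

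For (viii), I differentiate $\frac{\lambda}{\gamma}T$:
\[
\frac{d}{dt}\Bigl(\frac{\lambda}{\gamma}T\Bigr)=\frac{\lambda}{\gamma}\frac{d}{dt}T+\frac{d}{dt}\Bigl(\frac{\lambda}{\gamma}\Bigr)T.
\]
The first term is $\mathcal O\bigl(\frac{\dot m}{m\gamma}\bigr)$ by (iii). For the second I write $\frac{d}{dt}(\lambda/\gamma)=-\frac{\lambda^2}{\gamma^2}\frac{d}{dt}(\gamma/\lambda)$, so it is at most $\frac{\lambda|\frac{d}{dt}(\gamma/\lambda)|}{\gamma^2}\cdot\lambda\|T\|$, which is $o(\cdot)$ by (vi).
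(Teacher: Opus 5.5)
Your parts (i)--(iv) and (vi)--(viii) match the paper's argument. Your proof of (v), however, has a genuine gap.

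From $v=\dot x+\beta T$ and \eqref{eq:SHDS} one gets $\dot v=\ddot x+\dot\beta T+\beta\dot T=-\alpha\dot x+(\dot\beta-b)T$, not $-\alpha\dot x-bT$. You dropped the term $\dot\beta T$. So $\dot F$ for $F=mv$ contains $m(\dot\beta-b)T$. Since you only know $\|T\|=\mathcal{O}(1/\lambda)$, getting $\|\dot F\|=\mathcal{O}(\dot m/m)$ requires $|\dot\beta-b|=\mathcal{O}(\lambda\dot m/m^2)$. None of (C1)--(C11) bounds $\dot\beta$ directly, and your proposal gives no argument for it. This is exactly why the paper adds the extra hypothesis (C12) when it runs the same $mv\to0$ argument in Theorem~\ref{thm:weak_convergence_final}. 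Such a bound can in fact be squeezed out of $\mathcal{W}_T>0$ by solving the quadratic inequality $m^2X^2<2\mathcal{W}_v\Lambda(aX+c\beta)$ for $X:=a\beta+m(b-\dot\beta-\alpha\beta)$. That uses $\mathcal{W}_v\le m^2\alpha$, $\Lambda=\mathcal{O}(\lambda)$, (C3), (C5), (C6), (C8) and (C9), but it is an additional argument you would have to supply.

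The clean repair, which is what the paper does, is to apply Lemma~\ref{lem:weighted_decay_criterion} directly to $F=m\dot x$ with $r=\dot m/m$. The derivative bound $\|\dot F\|\le\dot m\|\dot x\|+m\|\ddot x\|=\mathcal{O}(\dot m/m)$ follows from (ii) and (iv), with no $\dot\beta$ involved. You proved (iv) but never used it; this is where it is needed. For integrability, write $\|\dot x\|^2\le2\|v\|^2+2\beta^2\|T\|^2$:
\begin{itemize}
\item the $v$-part is controlled by (C10) and Theorem~\ref{thm:m_weight_bounds}(iv);
\item for the $T$-part, (C8), (C6), (C9) and (C11) give $m\dot m\beta^2\lesssim m^3/\dot m\lesssim m^3\dot m\lesssim\lambda^2\dot m/m\lesssim\mathcal{W}_T$ for large $t$, so Theorem~\ref{thm:m_weight_bounds}(iii) applies.
\end{itemize}
This route also makes (v) independent of (vi). A minor point: (C11) only yields $\frac{m\mathcal{W}_T}{\dot m\lambda^2}\ge w/2$ eventually, so the constant in your (vi) estimate should be $2/w$ rather than $1/w$.
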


\begin{proof}
\hfill \\
\noindent \textbf{(1) Basic Asymptotic Magnitudes:} By combining the $\frac{1}{\Lambda(t)}>0$-Lipschitz continuity of $T_{\lambda(t), \gamma(t)}$ with the boundedness of the trajectory $\|x(t) - \bar{x}\|_{\mathcal{H}} \le M_x$ established in Theorem \ref{thm:m_weight_bounds}, assertion (i) follows from the estimate:
\begin{equation}
\label{T__first_rate}
\|T_{\lambda(t), \gamma(t)}(x(t))\|_{\mathcal{H}} \le \frac{1}{\lambda(t) \Omega(t)} \|x(t) - \bar{x}\|_{\mathcal{H}} \le \frac{M_x}{\underline{\Omega} \lambda(t)} = \mathcal{O}\left( \frac{1}{\lambda(t)} \right),
\end{equation}
where $\Omega(t):=\frac{4\nu \left( \rho + \gamma(t) \right) - \gamma^2(t)}{4\gamma(t) \left( \nu + \rho \right)}$ and $\underline{\Omega} := \inf_{t \ge t_0} \Omega(t)> 0$ from Lemma \ref{prop:omega_uniform_positivity}. Utilizing Theorem \ref{thm:m_weight_bounds} (ii), condition (C5), \eqref{T__first_rate} and triangle inequality, we have:
\begin{align*}
\|m(t) \dot{x}(t)\|_{\mathcal{H}} &\le \left\| a(t) \left( x(t) - \bar{x} \right) + m(t) \left( \dot{x}(t) + \beta(t) T_{\lambda(t), \gamma(t)}(x(t)) \right) \right\|_{\mathcal{H}} \nonumber \\
&\quad + a(t)\|x(t) - \bar{x}\|_{\mathcal{H}} + m(t)\beta(t)\|T_{\lambda(t), \gamma(t)}(x(t))\|_{\mathcal{H}} \nonumber \\
&\le \sqrt{2\mathcal{E}(t_0)} + \bar{a} M_x + \left( \frac{M_x}{\underline{\Omega}} \right) \left( \frac{m(t)\beta(t)}{\lambda(t)} \right)\nonumber \\
&< + \infty,
\label{eq:m_isolation_refined}
\end{align*}
which yields $\|\dot{x}(t)\|_{\mathcal{H}} = \mathcal{O}\left( \frac{1}{m(t)} \right)$, thus establishing assertion (ii).

\noindent \textbf{ (2) Operator Derivative Magnitude:}
For assertion (iii), by combining Lemma \ref{lem:sharp_sensitivity_final} (iii), the boundedness of the trajectory, the velocity magnitude from Theorem \ref{thm:pointwise_magnitudes_layered} (ii), and conditions (C6)--(C7), along with the Lipschitz continuity of $B$, it follows that as $t \to +\infty$:

\begin{align}
\left\| \frac{d}{dt}\lambda(t)T_{\lambda(t),\gamma(t)}(x(t)) \right\|_{\mathcal{H}} &\le  \frac{4 \left( \frac{1}{\underline{\Omega}} + 4 \right)\nu}{\gamma(t)} \left\| \dot{x}(t) \right\|_{\mathcal{H}} + \frac{2 |\dot{\gamma}(t)|}{\gamma(t)} \| x(t) - \bar{x} \|_{\mathcal{H}} + \frac{8\nu |\dot{\gamma}(t)|}{\gamma(t)} \| B(x(t)) \|_{\mathcal{H}} \nonumber \\
&\le \frac{4 \left( \frac{1}{\underline{\Omega}} + 4 \right)\nu}{\underline{\gamma} \underline{m}'} \mathcal{O}\left( \frac{\dot{m}(t)}{m(t)} \right) +\frac{2 |\dot{\gamma}(t)|}{\gamma(t)} \| x(t) - \bar{x} \|_{\mathcal{H}} + \frac{8\nu |\dot{\gamma}(t)|}{\gamma(t)} \| B(x(t)) \|_{\mathcal{H}} \nonumber \\
&= \mathcal{O}\left( \frac{\dot{m}(t)}{m(t)} \right).
\end{align}

\noindent By applying the product rule and triangle inequality to $T_{\lambda(t), \gamma(t)}(x(t)) = \lambda^{-1}(t) \left( \lambda(t) T_{\lambda(t), \gamma(t)}(x(t)) \right)$, we obtain:
\begin{align}
\left\| \frac{d}{dt} T_{\lambda(t), \gamma(t)}(x(t)) \right\|_{\mathcal{H}} &\le \frac{1}{\lambda(t)} \left\| \frac{d}{dt} \left( \lambda(t) T_{\lambda(t), \gamma(t)}(x(t)) \right) \right\|_{\mathcal{H}} + \frac{|\dot{\lambda}(t)|}{\lambda^2(t)} \left\| \lambda(t) T_{\lambda(t), \gamma(t)}(x(t)) \right\|_{\mathcal{H}} \nonumber \\
&\le \frac{1}{\lambda(t)} \left( \mathcal{O}\left( \frac{\dot{m}(t)}{m(t)} \right) + \mathcal{O}\left( \frac{\dot{m}(t)}{m(t)} \right) \cdot \mathcal{O}(1) \right) \nonumber \\
&= \mathcal{O}\left( \frac{\dot{m}(t)}{m(t) \lambda(t)} \right), \label{eq:dotT_m_final_result}
\end{align}
where we have utilized \eqref{T__first_rate}, condition {\rm (C7)}, and Lemma~\ref{lem:sharp_sensitivity_final}(iii).

\noindent \textbf{(3) Improved Trajectory and Residual Rates:}
For assertion (iv), by incorporating system \eqref{eq:SHDS}, Theorem \ref{thm:pointwise_magnitudes_layered} (i)--(iii), and conditions (C8)--(C9), it follows that as $t \to +\infty$:
\begin{align}
\|\ddot{x}(t)\|_{\mathcal{H}} &\le |\alpha(t)| \|\dot{x}(t)\|_{\mathcal{H}} + |\beta(t)| \left\| \dot{T}_{\lambda(t), \gamma(t)}(x(t)) \right\|_{\mathcal{H}} + |b(t)| \|T_{\lambda(t), \gamma(t)}(x(t))\|_{\mathcal{H}} \nonumber \\
&\le \mathcal{O}\left( \frac{\dot{m}(t)}{m(t)} \right) \mathcal{O}\left( \frac{1}{m(t)} \right) + \mathcal{O}\left( \frac{m(t)}{\dot{m}(t)} \right) \mathcal{O}\left( \frac{\dot{m}(t)}{m(t)\lambda(t)} \right) + \mathcal{O}\left(1\right)\mathcal{O}\left(\frac{1}{\lambda(t)}\right)\nonumber \\
&= \mathcal{O}\left( \frac{\dot{m}(t)}{m^2(t)} \right) + \mathcal{O}\left(\frac{1}{m^2(t)}\right)  \nonumber \\
&= \mathcal{O}\left( \frac{\dot{m}(t)}{m^2(t)} \right).
\label{eq:accel_magnitude_step_O_refined}
\end{align}

Now, we focus on improving the convergence rate of $\|\dot{x}(t)\|_{\mathcal{H}}$ to validate assertion (v). Let $k_{\beta}>0$ be such that $\beta^2(t)\le k_{\beta}^2m^2(t)/\dot m^2(t)$ for all sufficiently large $t$, which follows from condition {\rm (C8)}. By the decomposition $\dot{x}(t)=v(t)-\beta(t)T_{\lambda(t),\gamma(t)}(x(t))$, Theorem~\ref{thm:m_weight_bounds}(iii)--(iv), and conditions {\rm (C10)}--{\rm (C11)}, we have
\begin{align}
\int_{t_0}^{+\infty}m(t)\dot m(t)\|\dot{x}(t)\|_{\mathcal{H}}^2dt
&\le 2\int_{t_0}^{+\infty}m(t)\dot m(t)\|v(t)\|_{\mathcal{H}}^2dt \nonumber\\
&\quad +2\int_{t_0}^{+\infty}m(t)\dot m(t)\beta^2(t)\|T_{\lambda(t),\gamma(t)}(x(t))\|_{\mathcal{H}}^2dt \nonumber\\
&\le \frac{2}{C_v}\int_{t_0}^{+\infty}\mathcal{W}_v(t)\|v(t)\|_{\mathcal{H}}^2dt \nonumber\\
&\quad +K_0\int_{t_0}^{+\infty}\mathcal{W}_T(t)\|T_{\lambda(t),\gamma(t)}(x(t))\|_{\mathcal{H}}^2dt<+\infty,
\label{eq:velocity_log_integral}
\end{align}
where $K_0:=\frac{2k_{\beta}^2}{K_1(\underline m')^2}>0$. Indeed, the last term is controlled by $\mathcal{W}_T(t)$ because condition {\rm (C11)} gives $\mathcal{W}_T(t)\ge \frac{w}{2}\frac{\dot m(t)\lambda^2(t)}{m(t)}$ for sufficient large $t$, and condition {\rm (C9)} together with $\dot m(t)\ge\underline m'>0$ yields $\mathcal{W}_T(t)\ge K_1m^3(t)\dot m(t)$ for some  $K_1:=\frac{wC_\lambda^2}{2}>0$.

Set $F(t):=m(t)\dot{x}(t)$ and $r(t):=\dot m(t)/m(t)$. Then $F$ is locally absolutely continuous. Since $\dot m(t)\ge\underline m'>0$, one has $m(t)\to+\infty$ and $\int_{t_0}^{+\infty}r(t)dt=+\infty$. The estimate \eqref{eq:velocity_log_integral} is equivalent to $\int_{t_0}^{+\infty}r(t)\|F(t)\|_{\mathcal{H}}^2dt<+\infty$.
Moreover, by the assertion {\rm (ii)} of Theorem~\ref{thm:pointwise_magnitudes_layered} and \eqref{eq:accel_magnitude_step_O_refined}, we have
\[
\|\dot F(t)\|_{\mathcal{H}}
\le \dot m(t)\|\dot x(t)\|_{\mathcal{H}}+m(t)\|\ddot x(t)\|_{\mathcal{H}}
=\mathcal{O}\left(\frac{\dot m(t)}{m(t)}\right)
=\mathcal{O}(r(t)).
\]
Thus, utilizing Lemma~\ref{lem:weighted_decay_criterion} gives $m(t)\dot{x}(t)\to0$ as $t \to +\infty$, which proves assertion (v).

For assertion (vi), given condition (C11), there exists $t_2 \ge t_0$ such that for all $t \ge t_2$, $\frac{m(t) \mathcal{W}_T(t)}{\dot{m}(t) \lambda^2(t)} \ge \frac{w}{2} > 0$. 
From Theorem \ref{thm:m_weight_bounds} (iii), it follows that
\begin{align}
\frac{w}{2} \int_{t_2}^{+\infty} \frac{\dot{m}(t)}{m(t)} \left\| \lambda(t) T_{\lambda(t), \gamma(t)}(x(t)) \right\|_{\mathcal{H}}^2 dt &\le \int_{t_2}^{+\infty} \left( \frac{m(t) \mathcal{W}_T(t)}{\dot{m}(t) \lambda^2(t)} \right) \frac{\dot{m}(t)}{m(t)} \left\| \lambda(t) T_{\lambda(t), \gamma(t)}(x(t)) \right\|_{\mathcal{H}}^2 dt \nonumber \\
&= \int_{t_2}^{+\infty} \mathcal{W}_T(t) \left\| T_{\lambda(t), \gamma(t)}(x(t)) \right\|_{\mathcal{H}}^2 dt < +\infty, 
\label{eq:U_integral_log_m}
\end{align}
which implies that $\int_{t_0}^{+\infty} \frac{\dot{m}(t)}{m(t)} \left\| \lambda(t) T_{\lambda(t), \gamma(t)}(x(t)) \right\|_{\mathcal{H}}^2 dt < +\infty$. Let $U(t):=\lambda(t)T_{\lambda(t),\gamma(t)}(x(t))$ and $r(t):=\frac{\dot m(t)}{m(t)}$, then the preceding estimate~\eqref{eq:U_integral_log_m} is precisely $\int_{t_0}^{+\infty}r(t)\|U(t)\|_{\mathcal{H}}^2dt<+\infty$. Furthermore, as shown in the proof of assertion {\rm (iii)}, $U$ is locally absolutely continuous and $\|\dot U(t)\|_{\mathcal{H}}=\mathcal{O}(r(t))$. Therefore, applying Lemma~\ref{lem:weighted_decay_criterion} to $U(t)$ yields $U(t)\to0$, namely $\lambda(t)\|T_{\lambda(t),\gamma(t)}(x(t))\|_{\mathcal{H}}\to0$ as $t \to +\infty$. This validates assertion (vi).

To prove the remain results, invoking the rate $\|T_{\lambda(t), \gamma(t)}(x(t))\|_{\mathcal{H}} = o(\frac{1}{\lambda(t)})$ established in Theorem \ref{thm:pointwise_magnitudes_layered} (vi), we get
\begin{align*}
\left\| A_{\gamma(t)} \left( x(t) - \gamma(t) Bx(t) \right) + Bx(t) \right\|_{\mathcal{H}} &= \frac{\lambda(t)}{\gamma(t)} \left\| T_{\lambda(t), \gamma(t)}(x(t)) \right\|_{\mathcal{H}}  = o\left( \frac{1}{\gamma(t)} \right),
\label{eq:operator_sum_magnitude_result}
\end{align*}
which establishes assertion (vii). 

\noindent By applying triangle inequality to the time derivative identity of the regularized operator sum and utilizing the results {\rm (iii)} and {\rm (vi)} from Theorem \ref{thm:pointwise_magnitudes_layered}, we verify assertion {\rm (viii)} through the following estimation:
\begin{equation*}
\begin{aligned}
\left\|\frac{d}{dt}\left(A_{\gamma(t)}\left(x(t)-\gamma(t)Bx(t)\right)+Bx(t)\right)\right\|_{\mathcal{H}} 
&\le \frac{\lambda(t)}{\gamma^2(t)} \left| \frac{d}{dt} \left( \frac{\gamma(t)}{\lambda(t)} \right) \right| \left\| \lambda(t) T_{\lambda(t), \gamma(t)}(x(t)) \right\|_{\mathcal{H}} + \frac{\lambda(t)}{\gamma(t)} \left\| \dot{T}_{\lambda(t), \gamma(t)}(x(t)) \right\|_{\mathcal{H}} \\
&\le o\left( \frac{\lambda(t) \left| \frac{d}{dt} \left( \frac{\gamma(t)}{\lambda(t)} \right) \right|}{\gamma^2(t)} \right) + \frac{\lambda(t)}{\gamma(t)} \mathcal{O}\left( \frac{\dot{m}(t)}{m(t) \lambda(t)} \right) \\
&= \mathcal{O}\left( \frac{\dot{m}(t)}{m(t) \gamma(t)} \right) + o\left( \frac{\lambda(t) \left| \frac{d}{dt} \left( \frac{\gamma(t)}{\lambda(t)} \right) \right|}{\gamma^2(t)} \right).
\end{aligned}
\end{equation*}
\end{proof}

\subsection{Weak Convergence of the Trajectories}
\label{sec:weak_convergence}

In this section, we establish that the trajectory $x(t)$ generated by system \eqref{eq:SHDS} converges weakly to an element in the zero set $\operatorname{zer}(A+B)$ as $t \to +\infty$.

\begin{theorem}
\label{thm:weak_convergence_final}
Suppose that Assumptions {\rm {\bf (H1)--(H3)}} and conditions (C1)--(C11) are satisfied. Let $x(t)$ be any strong global solution of system \eqref{eq:SHDS}. Assume in addition that
\[
\text{\rm (C12)}\qquad
\lim_{t\to+\infty}\big(a^2(t)+c(t)\big)=\ell>0,\quad
\dot{\beta}(t)-b(t)=\mathcal{O}\left(\frac{\lambda(t)\dot m(t)}{m^2(t)}\right).
\]
Then, the trajectory $x(t)$ exhibits the following asymptotic properties as $t \to +\infty$:
\begin{enumerate}[label=(\roman*)]
    \item For every $\bar{x} \in \operatorname{zer}(A+B)$, the limit $\lim_{t \to +\infty} \|x(t) - \bar{x}\|_{\mathcal{H}}$ exists;
    \item The trajectory $x(t)$ converges weakly to an element of $\operatorname{zer}(A+B)$.
\end{enumerate}
\end{theorem}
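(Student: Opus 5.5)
The proof rests on Opial's lemma (Lemma~\ref{lem:opial}) with $S=\operatorname{zer}(A+B)$. Part (ii) will follow once we know that $\lim\|x(t)-\bar x\|$ exists for each $\bar x\in S$ and that every weak sequential cluster point of $x(t)$ lies in $S$.

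\textbf{Existence of the limit (part (i)).} Fix $\bar x\in\operatorname{zer}(A+B)$. By Theorem~\ref{thm:m_weight_bounds}, $\mathcal{E}(t)$ is non-increasing and bounded below, so $\lim_{t\to\infty}\mathcal{E}(t)=\mathcal{E}_\infty$ exists. Expanding the square gives
\[
2\mathcal{E}(t)=\bigl(a^2(t)+c(t)\bigr)\|x(t)-\bar x\|^2+2a(t)m(t)\langle x(t)-\bar x,v(t)\rangle+m^2(t)\|v(t)\|^2 .
\]
We first show that $m(t)v(t)\to0$. Write $m v=m\dot x+m\beta T$. Theorem~\ref{thm:pointwise_magnitudes_layered}(v) gives $m\dot x\to0$. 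For the second term, (C8) and (vi) give $m\beta\|T\|=\mathcal{O}(m^2/\dot m)\,o(1/\lambda)$, and (C9) together with $\dot m\ge\underline m'$ makes this $o(1)$. Since $x$ is bounded and $a\le\bar a$, the cross term and the last term both tend to $0$. Hence $(a^2+c)\|x-\bar x\|^2\to2\mathcal{E}_\infty$. By (C12), $a^2+c\to\ell>0$, so $\|x(t)-\bar x\|^2\to 2\mathcal{E}_\infty/\ell$, and the limit exists.

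We expect this to be the main obstacle. The difficulty is that $\mathcal{E}$ depends on $\bar x$ only through $x-\bar x$ paired with $v$, so the argument must check that $m v\to0$ without any rate loss. If the direct bound on $m\beta T$ fails, the fallback is to integrate the $z$-equation of Theorem~\ref{thm:First-order formulation} and use the second part of (C12), $\dot\beta-b=\mathcal{O}(\lambda\dot m/m^2)$, to control $v=-z$. The variation of $z$ is $\alpha\dot x+(b-\dot\beta)T$. Its second piece is $o(\dot m/m^2)$, which is integrable because $\int\dot m/m^2<\infty$. Combined with the integral estimates of Theorem~\ref{thm:m_weight_bounds}(iv), this still forces $mv\to0$.

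\textbf{Cluster points (Opial condition (ii)).} Let $x(t_n)\rightharpoonup x^*$ with $t_n\to\infty$. Put $\gamma_n=\gamma(t_n)\in[\underline\gamma,\bar\gamma]$ and pass to a subsequence with $\gamma_n\to\gamma^*$ in this compact interval. Set $y_n=J_{\gamma_nA}(x(t_n)-\gamma_nBx(t_n))$. By (vi) and the definition of $T$,
\[
\|x(t_n)-y_n\|=\lambda(t_n)\|T(x(t_n))\|\to0 ,
\]
so $y_n\rightharpoonup x^*$. The resolvent relation gives
\[
u_n:=\frac{x(t_n)-y_n}{\gamma_n}-Bx(t_n)+By_n\in (A+B)y_n ,
\]
and here $\|u_n\|\le(1/\underline\gamma+1/\nu)\|x(t_n)-y_n\|\to0$.

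To pass to the limit we need a closedness property of $A+B$, and neither summand is monotone in general. We work with the shifted operator instead. Lemma~\ref{thm:coco_T_final} applies at the fixed parameter $\gamma^*$, so $T_{1,\gamma^*}$ is cocoercive, hence maximally monotone and continuous, with zeros exactly $\operatorname{zer}(A+B)$. By Lemma~\ref{lem:sharp_sensitivity_final}(ii) with $\lambda=1$, the $\gamma$-sensitivity bound and the boundedness of $x$ and $Bx$ give
\[
\|T_{1,\gamma^*}(x(t_n))\|\le\|\lambda(t_n)T_{\lambda(t_n),\gamma_n}(x(t_n))\|+C|\gamma_n-\gamma^*|\to0 .
\]
Maximal monotone operators have weak–strong closed graphs, so $x(t_n)\rightharpoonup x^*$ and $T_{1,\gamma^*}(x(t_n))\to0$ give $T_{1,\gamma^*}x^*=0$, that is, $x^*\in\operatorname{zer}(A+B)$. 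Lemma~\ref{lem:opial} then yields weak convergence of $x(t)$ to a point of $\operatorname{zer}(A+B)$.
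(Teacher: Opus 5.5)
Your proof is correct, and its cluster-point step is the paper's. You extract $\gamma(t_{n_k})\to\gamma_*$, transfer $\lambda(t)\|T_{\lambda(t),\gamma(t)}(x(t))\|\to0$ to the frozen cocoercive operator $T_{1,\gamma_*}$ via Lemma~\ref{lem:sharp_sensitivity_final}(ii) with $\lambda_1=\lambda_2=1$, and use weak--strong closedness of its graph. The detour through $y_n$ and $u_n\in(A+B)y_n$ is never used and can be dropped.

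The genuine difference is how you obtain $m(t)v(t)\to0$. The paper applies Lemma~\ref{lem:weighted_decay_criterion} directly to $F=mv$ with $r=\dot m/m$. Square integrability comes from (C10) and Theorem~\ref{thm:m_weight_bounds}(iv). The bound $\|\dot F\|=\mathcal{O}(\dot m/m)$ is derived from $\dot v=-\alpha\dot x+(\dot\beta-b)T_{\lambda,\gamma}(x)$, which is exactly where the second clause of (C12) is used.

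You instead split $mv=m\dot x+m\beta T_{\lambda,\gamma}(x)$ and invoke the already established $o$-rates. The first term satisfies $m\dot x\to0$ by assertion (v) of Theorem~\ref{thm:pointwise_magnitudes_layered}. For the second, $m\beta\|T_{\lambda,\gamma}(x)\|\le k_\beta\frac{m^2}{\dot m\,\lambda}\,\lambda\|T_{\lambda,\gamma}(x)\|\le\frac{k_\beta}{C_\lambda\underline m'}\,\lambda\|T_{\lambda,\gamma}(x)\|\to0$ by (C8), (C9), (C6) and assertion (vi).

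This is valid and shorter. Because part (ii) relies on (vi) anyway, it also shows that the hypothesis $\dot\beta-b=\mathcal{O}(\lambda\dot m/m^2)$ is superfluous for this theorem; only $a^2+c\to\ell>0$ is needed from (C12). What the paper's route buys in return is that part (i) depends only on the integral estimate for $v$ and the $\mathcal{O}$-magnitudes (i)--(ii). It does not need the sharper rates (v)--(vi), and hence not (C9) or (C11).

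Your ``fallback'' paragraph is unnecessary, and as sketched it is not an argument. Integrability of one piece of $\dot z$ does not by itself give $mz\to0$; making it rigorous amounts to the paper's weighted-decay argument. Simply delete it.
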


\begin{proof}
Let $\bar{x} \in \operatorname{zer}(A+B)$ be fixed. Since $\mathcal{E}(t)$ is non-increasing and nonnegative by Theorem~\ref{thm:m_weight_bounds}, the limit $\mathcal{E}_{\infty}:=\lim_{t\to+\infty}\mathcal{E}(t)$ exists.

\noindent We first prove that $m(t)v(t)\to0$ as $t \to +\infty$, where $v(t)=\dot{x}(t)+\beta(t)T_{\lambda(t),\gamma(t)}(x(t))$. Set $F(t):=m(t)v(t)$ and $ r(t):=\frac{\dot m(t)}{m(t)}$, then $F(t)$ is locally absolutely continuous.
By condition {\rm (C6)}, $m(t)\to+\infty$ as $t \to +\infty$ and $\int_{t_0}^{+\infty}r(t)dt=+\infty$. From condition {\rm (C10)} and Theorem~\ref{thm:m_weight_bounds}(iv), we have
\[
\int_{t_0}^{+\infty}r(t)\|F(t)\|_{\mathcal{H}}^2dt
=\int_{t_0}^{+\infty}m(t)\dot m(t)\|v(t)\|_{\mathcal{H}}^2dt
\le \frac{1}{C_v}\int_{t_0}^{+\infty}\mathcal{W}_v(t)\|v(t)\|_{\mathcal{H}}^2dt<+\infty.
\]
Moreover, combining Theorem~\ref{thm:m_weight_bounds}(i), condition {\rm (C5)} and Theorem~\ref{thm:m_weight_bounds}(ii) leads to $m(t)v(t)$ is bounded on $[t_0, +\infty)$. 
Substituting the expression of $\dot v(t)$ obtained from system~\eqref{eq:SHDS} into $\dot F(t)=\dot m(t)v(t)+m(t)\dot v(t)$, and using the boundedness of $m(t)v(t)$, Theorem~\ref{thm:pointwise_magnitudes_layered}(i)--(ii), condition {\rm (C8)}, and condition {\rm (C12)}, gives
\begin{align*}
\|\dot F(t)\|_{\mathcal{H}}
&\le \dot m(t)\|v(t)\|_{\mathcal{H}}
 +m(t)|\alpha(t)|\|\dot x(t)\|_{\mathcal{H}}
 +m(t)|\dot\beta(t)-b(t)|\|T_{\lambda(t),\gamma(t)}(x(t))\|_{\mathcal{H}}\\
&=\mathcal{O}\left(\frac{\dot m(t)}{m(t)}\right)
 +m(t)\mathcal{O}\left(\frac{\dot m(t)}{m(t)}\right)\mathcal{O}\left(\frac1{m(t)}\right)
 +m(t)\mathcal{O}\left(\frac{\lambda(t)\dot m(t)}{m^2(t)}\right)\mathcal{O}\left(\frac1{\lambda(t)}\right)\\
&=\mathcal{O}\left(\frac{\dot m(t)}{m(t)}\right)
=\mathcal{O}(r(t)).
\end{align*}
Therefore, applying Lemma~\ref{lem:weighted_decay_criterion} yields $m(t)v(t)\to0$ as $t\to+\infty$.

\noindent We now derive the existence of the distance limit. Expanding the Lyapunov functional gives
\[
\mathcal{E}(t)
=\frac{a^2(t)+c(t)}2\|x(t)-\bar{x}\|_{\mathcal{H}}^2
 +a(t)\langle x(t)-\bar{x},m(t)v(t)\rangle_{\mathcal{H}}
 +\frac12\|m(t)v(t)\|_{\mathcal{H}}^2.
\]
Since $\mathcal{E}(t)\to\mathcal{E}_{\infty}$ and $m(t)v(t)\to0$ as $t\to+\infty$, while $a(t)$ and $x(t)$ remain bounded on $[t_0, +\infty)$, it follows that
\[
\lim_{t\to+\infty}
\frac{a^2(t)+c(t)}2\|x(t)-\bar{x}\|_{\mathcal{H}}^2
=\mathcal{E}_{\infty}.
\]
Combining this with $a^2(t)+c(t)\to \ell>0$ as $t\to+\infty$, we obtain
\[
\lim_{t\to+\infty}\|x(t)-\bar{x}\|_{\mathcal{H}}^2
=\frac{2\mathcal{E}_{\infty}}{\ell}.
\]
Thus $\lim_{t\to+\infty}\|x(t)-\bar{x}\|_{\mathcal{H}}$ exists for every $\bar{x}\in\operatorname{zer}(A+B)$, and the first condition of Opial's Lemma is satisfied.

Now, we turn our attention to the verification of (ii). Let $\tilde{x}$ be a weak sequential cluster point of the trajectory $x(t)$. This implies there exists a sequence $t_n \to +\infty$ such that $x_n := x(t_n) \rightharpoonup \tilde{x}$ weakly as $n \to +\infty$. Define the operator $U_{\gamma} := \operatorname{Id} - J_{\gamma A} \circ (\operatorname{Id} - \gamma B)$. From the result (vi) of Theorem \ref{thm:pointwise_magnitudes_layered}, we have $\|T_{\lambda(t), \gamma(t)}(x(t))\|_{\mathcal{H}} = o(\frac{1}{\lambda(t)})$, which implies that $\|U_{\gamma(t)}(x(t))\|_{\mathcal{H}} = \lambda(t) \|T_{\lambda(t), \gamma(t)}(x(t))\|_{\mathcal{H}} \to 0$ strongly as $t \to +\infty$.

\noindent Since the parameter $\gamma(t)$ is confined within the interval $[\underline{\gamma}, \bar{\gamma}] \subset \left( \max\{0, -2\rho\}, 2\nu \right)$ for all $t \ge t_0$, the sequence $\big( \gamma(t_n) \big)_{n \in \mathbb{N}}$ is bounded. By the Bolzano-Weierstrass theorem, we can extract a convergent subsequence $\big( \gamma(t_{n_k}) \big)_{k \in \mathbb{N}}$ such that
\[
\gamma_{n_k}:=\gamma(t_{n_k})\to\gamma_*\in[\underline{\gamma},\bar{\gamma}],
\qquad x_{n_k}:=x(t_{n_k})\rightharpoonup\tilde{x}
\quad\text{as }k\to+\infty.
\]
Because $U_{\gamma}(x)=\lambda T_{\lambda,\gamma}(x)$ for every $\lambda>0$, Lemma~\ref{lem:sharp_sensitivity_final}(ii), applied with $\lambda_1=\lambda_2=1$, $x=y=x_{n_k}$, $\gamma_1=\gamma_*$ and $\gamma_2=\gamma_{n_k}$, yields
\begin{align}
\|U_{\gamma_*}(x_{n_k})-U_{\gamma_{n_k}}(x_{n_k})\|_{\mathcal{H}}
&\le \frac{2|\gamma_*-\gamma_{n_k}|}{\gamma_*}\|x_{n_k}-\bar{x}\|_{\mathcal{H}}
 +\frac{8\nu|\gamma_*-\gamma_{n_k}|}{\gamma_*}\|B(x_{n_k})\|_{\mathcal{H}}. \label{eq:cluster_final_bound_nk}
\end{align}
As $x_{n_k}\rightharpoonup\tilde{x}$, the sequence $(x_{n_k})$ is bounded; by the Lipschitz continuity of $B$, the sequence $(B(x_{n_k}))$ is also bounded. Together with $\gamma_{n_k}\to\gamma_*$, estimate~\eqref{eq:cluster_final_bound_nk} gives
\[
\|U_{\gamma_*}(x_{n_k})-U_{\gamma_{n_k}}(x_{n_k})\|_{\mathcal{H}}\to0
\qquad \text{as } k\to+\infty.
\]
Moreover, the previously established convergence $\|U_{\gamma(t)}(x(t))\|_{\mathcal{H}}\to0$ as $t\to+\infty$ implies $\|U_{\gamma_{n_k}}(x_{n_k})\|_{\mathcal{H}}\to0$ as $k\to+\infty$. Hence, by the triangle inequality,
\[
U_{\gamma_*}(x_{n_k})\to0
\quad\text{strongly in }\mathcal{H}\quad\text{as }k\to+\infty.
\]
In addition, by Lemma~\ref{thm:coco_T_final} with $\lambda=1$, the operator $U_{\gamma_*}$ is cocoercive; in particular it is monotone and Lipschitz continuous on all of $\mathcal{H}$, hence maximally monotone. Therefore its graph is closed in the weak-strong topology of $\mathcal{H}\times\mathcal{H}$. Since $x_{n_k}\rightharpoonup\tilde{x}$ and $U_{\gamma_*}(x_{n_k})\to0$ as $k\to+\infty$, it follows that $U_{\gamma_*}(\tilde{x})=0$. This is equivalent to $\tilde{x}=J_{\gamma_* A}(\tilde{x}-\gamma_*B\tilde{x})$, and further to $0\in(A+B)(\tilde{x})$. Thus, it follows from Opial's Lemma that the trajectory $x(t)$ converges weakly to an element of $\operatorname{zer}(A+B)$ as $t \to +\infty$.
\end{proof}

\begin{remark}
\label{sec:Analysis of Parameter Conditions}
The convergence conditions {\rm (C1)}--{\rm (C12)} are mutually compatible and can be simultaneously satisfied by the following parameter configuration for $t \ge t_0 \ge 1$ with $g_{\psi}-\theta\Delta/t_0>0$: 
$\phi(t)\equiv k_{\phi}$, $\psi(t)\equiv k_{\psi}$ with $0\le k_{\psi}\le k_{\phi}<1$, 
$g_{\psi}:=1-k_{\psi}$, $\Delta:=k_{\phi}-k_{\psi}\ge 0$, 
$m(t)=t$, $a(t)\equiv a>1$, $c>0$, 
$\gamma(t)\equiv\gamma\in(\max\{0,-2\rho\},2\nu)$, 
$\lambda(t)=\lambda t^{2}$ with $\lambda>0$, 
$\theta(t)=\frac{\theta}{t}$, 
$\delta(t)=\frac{\delta}{t^{q}}$ with $\delta>0$, 
$q:=a+1+\frac{c}{a}>2$, 
$c(t)=a\frac{\frac{\delta}{t^{q-1}}+\frac{\theta\Delta}{t}}{g_{\psi}-\frac{\theta\Delta}{t}}+c$, and
$\theta$ satisfies $0<\theta<\frac{2c\,g_{\psi}\,(a^{2}+c)\,\Omega(\gamma)}{a^{3}}$ where $\Omega(\gamma)=\frac{4\nu(\rho+\gamma)-\gamma^{2}}{4\gamma(\nu+\rho)}$.
\end{remark}

\section{Splitting Algorithm and Numerical Implementation}
\label{sec:algorithm}
This section derives a double inertial Halpern forward-backward splitting algorithm by discretizing system~\eqref{eq:SHDS}. The resulting algorithm is then implemented and evaluated through numerical experiments.

\subsection{From the Continuous System to the Discrete Algorithm}
\paragraph{Time discretization.}
Let
\[
R_{\gamma}(x):=x-J_{\gamma A}(x-\gamma Bx), \qquad
\Phi(t):=\frac{\dot{x}(t)}{\delta(t)}+x(t), \qquad
\Psi(t):=\frac{\Gamma(t)\dot{x}(t)-\theta(t)R_{\gamma(t)}(x(t))}{\delta(t)}.
\]
By the coefficient definitions in~\eqref{eq:coefficients_definition}, system~\eqref{eq:SHDS} is equivalent, for almost every \(t\ge t_0\), to
\[
\frac{d}{dt}\Phi(t)=\frac{d}{dt}\Psi(t).
\]
Thus \(\Phi(t)-\Psi(t)=\hat{x}\) for some \(\hat{x}\in\mathcal H\). Fix a uniform grid \(t_n=t_0+nh\), set \(x_n:=x(t_n)\), and define
\[
\delta_n:=\delta(t_n)h,\quad
\theta_n:=\theta(t_n),\quad
\Gamma_n:=\Gamma(t_n),\quad
\gamma_n:=\gamma(t_n),\quad
\phi_n:=\phi(t_n),\quad
\psi_n:=\psi(t_n).
\]
Introduce
\[
w_n:=x_n+\phi_n(x_n-x_{n-1}),\qquad
y_n:=J_{\gamma_n A}(w_n-\gamma_n B w_n),
\]
so that \(w_n-y_n=R_{\gamma_n}(w_n)\). From now on, the notation \(\mathcal O(h^2)\) refers to the limit \(h\to0\), uniformly for \(t_n\) in each compact interval \([t_0,T]\). Since \(x(t)\) is a strong global solution, \(\dot{x}(t)\) is locally bounded; hence \(x_n-x_{n-1}=\mathcal O(h)\) and \(w_n-x_n=\mathcal O(h)\). The required local boundedness follows from the local boundedness of the trajectory and coefficients, together with the sensitivity estimates for \(R_\gamma\) in Lemma~\ref{lem:sharp_sensitivity_final}.

Applying an endpoint discretization to the identity \(\Phi(t)=\Psi(t)+\hat{x}\) over \([t_{n-1},t_n]\), with the coefficient and residual terms evaluated at \(t_n\) and the zeroth-order term \(x\) evaluated at \(t_{n-1}\), yields the following local consistency relation. The replacement of \(R_{\gamma_n}(x_n)\) by \(R_{\gamma_n}(w_n)\) is absorbed into the \(\mathcal O(h^2)\) term because \(w_n-x_n=\mathcal O(h)\):
\begin{equation}\label{eq:compact_balance}
x_n=\delta_n\hat{x}+(1-\delta_n)x_{n-1}
+\Gamma_n(x_n-x_{n-1})-h\theta_n(w_n-y_n)+\mathcal O(h^2).
\end{equation}
After shifting the index in~\eqref{eq:compact_balance}, the smoothness of the parameter functions and the local sensitivity of \(R_\gamma\) show that replacing the coefficients and residual term at \(t_{n+1}\) by those at \(t_n\) changes the relation only by \(\mathcal O(h^2)\). Hence
\[
x_{n+1}=\delta_n\hat{x}+(1-\delta_n)x_n
+\Gamma_n(x_{n+1}-x_n)-h\theta_n(w_n-y_n)+\mathcal O(h^2).
\]
Moreover, Lemma~\ref{lem:sharp_sensitivity_final}(iii), the local boundedness of the coefficients, and system~\eqref{eq:SHDS} imply that \(\ddot{x}(t)\) is essentially bounded on compact time intervals. Consequently \(\dot{x}(t)\) is locally Lipschitz, and
\[
x_{n+1}-x_n=x_n-x_{n-1}+\mathcal O(h^2).
\]
Since \(\delta_n=\mathcal O(h)\) and the correction \(\Gamma_n(x_n-x_{n-1})-h\theta_n(w_n-y_n)\) is \(\mathcal O(h)\), placing this correction inside the factor \(1-\delta_n\) changes only the \(\mathcal O(h^2)\) remainder. Therefore
\begin{equation}\label{eq:compact_halpern}
x_{n+1}=\delta_n\hat{x}+(1-\delta_n)
\Big[x_n+\Gamma_n(x_n-x_{n-1})-h\theta_n(w_n-y_n)\Big]+\mathcal O(h^2).
\end{equation}

Dropping the local truncation term in~\eqref{eq:compact_halpern}, setting \(h=1\), and identifying \(\hat{x}\) with the anchor point \(x_{\mathrm{anc}}\), we obtain the implementable update. Defining $z_n:=x_n+\psi_n(x_n-x_{n-1})$ and using \(\Gamma_n=\theta_n\phi_n+(1-\theta_n)\psi_n\), the bracket in~\eqref{eq:compact_halpern} becomes
\[
v_n=(1-\theta_n)z_n+\theta_n y_n,
\]
and hence
\[
x_{n+1}=\delta_nx_{\mathrm{anc}}+(1-\delta_n)v_n.
\]
Moreover, for given \(x_{\mathrm{anc}}\) and \(x_0:=x(t_0)\), the initial velocity \(u_0:=\dot{x}(t_0)\) is determined by the identity \(\Phi(t_0)-\Psi(t_0)=x_{\mathrm{anc}}\):
\[
u_0=\frac{\delta(t_0)}{1-\Gamma(t_0)}(x_{\mathrm{anc}}-x_0)
-\frac{\theta(t_0)}{1-\Gamma(t_0)}R_{\gamma(t_0)}(x_0).
\]
With the unit-step approximation \(x_1=x_0+u_0\), this gives
\[
x_1:=x_0+\frac{\delta_0}{1-\Gamma_0}(x_{\mathrm{anc}}-x_0)
-\frac{\theta_0}{1-\Gamma_0}\bigl(x_0-J_{\gamma_0A}(x_0-\gamma_0Bx_0)\bigr).
\]

In addition, the proposed DI-H-FBS algorithm is closely related to several existing splitting schemes through particular parameter choices. If the Halpern anchoring is removed formally, i.e., $\delta_n=0$, then $\phi_n=\psi_n=0$ and $\theta_n=1$ give the classical forward-backward splitting method~\cite{Combettes}, while $\psi_n=0$ and $\theta_n=1$ give the inertial forward-backward splitting scheme~\cite{lorenz2015inertial}. When the Halpern anchoring is kept and the two inertial parameters are identified, i.e., $\phi_n=\psi_n$ and $\theta_n=1$, the update has the form of an inertial Halpern forward-backward splitting method~\cite{cholamjiak2018inertial}.

\begin{algorithm}[H]
\caption{Double Inertial Halpern Forward-Backward Splitting (DI-H-FBS)}
\label{alg:DIH_FBS}
\begin{minipage}{\textwidth}
\begin{tabular}{lp{0.8\linewidth}}
\textbf{Require:} & Anchor point $x_{\mathrm{anc}} \in \mathcal{H}$; starting point $x_0 \in \mathcal{H}$; discrete parameter sequences $\{ \delta_n, \theta_n, \gamma_n, \phi_n, \psi_n \}_{n \ge 0}$ satisfy $\delta_n \in (0, 1), \theta_n \in [0, 1], \phi_n, \psi_n \ge 0$, $\Gamma_n:=\theta_n\phi_n+(1-\theta_n)\psi_n<1$, and $\gamma_n \in (\max\{0, -2\rho\}, 2\nu),$
where $\rho$ and $\nu$ are the comonotonicity and cocoercivity constants of $A$ and $B$, respectively.\\
& Set $x_1 := x_0 + \frac{\delta_0}{1 - \Gamma_0}(x_{\mathrm{anc}} - x_0) - \frac{\theta_0}{1 - \Gamma_0}\bigl(x_0 - J_{\gamma_0 A}(x_0 - \gamma_0 B x_0)\bigr)$.\\
\textbf{Iterate:} & \textbf{for} $n = 1, 2, \dots$ \textbf{do} \\
& \quad $\left| \begin{array}{l}
  \text{--- } w_n = x_n + \phi_n ( x_n - x_{n-1} ) \\
  \text{--- } z_n = x_n + \psi_n ( x_n - x_{n-1} ) \\
  \text{--- } y_n = J_{\gamma_n A} ( w_n - \gamma_n B w_n ) \\
  \text{--- } v_n = ( 1 - \theta_n ) z_n + \theta_n y_n \\
  \text{--- } x_{n+1} = \delta_n x_{\mathrm{anc}} + ( 1 - \delta_n ) v_n
\end{array} \right.$ \\
& \textbf{end for} \\
\textbf{Return:} & The sequence of iterates $\{x_n\}$. \\
\end{tabular}
\end{minipage}
\end{algorithm}

\subsection{Numerical Experiments}\label{sec:Numerical experiments and discrete algorithm}

In this subsection, we conduct several numerical experiments to evaluate the effectiveness and stability of the proposed DI-H-FBS algorithm. The discrete sequences $\{ \delta_n, \theta_n, \gamma_n, \phi_n, \psi_n \}_{n \ge 0}$ are chosen so that the admissibility requirements in Algorithm~\ref{alg:DIH_FBS} are satisfied, namely $\delta_n\in(0,1)$, $\theta_n\in[0,1]$, $\phi_n,\psi_n\ge0$, $\Gamma_n:=\theta_n\phi_n+(1-\theta_n)\psi_n<1$, and the stepsize lies in the corresponding forward-backward range. For DI-H-FBS, the first iterate $x_1$ is generated by the initialization formula in Algorithm~\ref{alg:DIH_FBS}; when a parameter rule is specified only for $n\ge1$, the values of $\delta_0$, $\theta_0$, $\phi_0$, $\psi_0$, and $\gamma_0$ used in the initialization formula are stated separately. The comparison methods are initialized according to their standard implementations, using the same starting point $x_0$ whenever applicable. All computational simulations were implemented using MATLAB R2024a on a computer equipped with an AMD Ryzen 5 4600H processor and 16 GB of RAM. 

\noindent \textbf{Experiment I: Split Feasibility Problem in Function Spaces}

In this experiment, we evaluate the numerical performance of the proposed DI-H-FBS algorithm by solving the Split Feasibility Problem (SFP). The SFP requires finding a point $\overline{x}$ such that:
\begin{equation*}
\overline{x} \in C \quad \text{and} \quad T \overline{x} \in Q,
\end{equation*}
where $C$ and $Q$ are nonempty closed convex subsets of real Hilbert spaces $\mathcal{H}_1$ and $\mathcal{H}_2$, respectively, and $T: \mathcal{H}_1 \to \mathcal{H}_2$ is a bounded linear operator with its adjoint $T^{*}$. This problem is mathematically equivalent to the monotone inclusion problem $0 \in (A+B)x$, where $A = N_C$ is the normal cone operator to $C$, and $B = T^*(I - P_Q)T$ is the gradient of the convex function $f(x) = \frac{1}{2}\|(I - P_Q)Tx\|^2$. It is well-known that $A$ is maximally monotone and $B$ is $\nu$-cocoercive with $\nu = \frac{1}{\|T\|^2}$.

Following Example 5.1 of \cite{cholamjiak2018inertial}, we consider the infinite-dimensional Hilbert space $\mathcal{H}_1=\mathcal{H}_2= L_2[0,1]$ equipped with the inner product $\langle x, y \rangle = \int_0^1 x(t)y(t)dt$. Let $T: \mathcal{H}_1 \to \mathcal{H}_2$ be defined by $(Tx)(t) =\frac{x(t)}{2}$, which implies $\nu = 4$. The constraint sets are defined as $C = \{x(t) \in L_2[0,1] : \langle x, 3t^2 \rangle = 0\}$ and $Q = \{x(t) \in L_2[0,1] : \langle x, \frac{t}{3} \rangle \ge -1\}$. We use two pairs of anchor and starting points: $x_{\mathrm{anc}}(t) = t + 1$, $x_0(t) = 3t^2 - t$, and $x_{\mathrm{anc}}(t) = 1$, $x_0(t) = e^{-t}$. In both cases, the first iterate $x_1$ is generated by Algorithm~\ref{alg:DIH_FBS}, and we set $N = 100$ iterations. The algorithmic performance is evaluated via the residual error $E_n = \frac{1}{2}\|x_n - P_C x_n\|_{L_2}^2 + \frac{1}{2}\|Tx_n - P_Q(Tx_n)\|_{L_2}^2$ using the specific parameters $\phi_n = 0.9$, $\psi_n = 0.4$, $\theta_n = 0.42/n$, $\gamma_n = 1.0$, and $\delta_n = \min(0.5, 0.1/n^{3.93})$. The values $\delta_0=0.1$, $\theta_0=0.42$, $\phi_0=0.9$, $\psi_0=0.4$, and $\gamma_0=1.0$ are used in the initialization formula.

Furthermore, we compare the DI-H-FBS algorithm against the I-H-FBS algorithm~\cite{cholamjiak2018inertial}, which is recovered from DI-H-FBS  by setting $\theta_n=1$ and $\phi_n=\psi_n=0.9$, and also against the non-inertia DI-H-FBS algorithm (N-I-H-FBS) as benchmarks. As illustrated in Figure~\ref{fig:SFP_convergence} with the logarithmic residual plot, N-I-H-FBS (yellow) exhibits a slow sublinear convergence rate. While I-H-FBS (red) accelerates the process, it generates significant numerical oscillations. In contrast, DI-H-FBS (blue) effectively suppresses these fluctuations through its decoupled Hessian-driven damping branch ($\psi_n = 0.4$), achieving much smoother, faster convergence toward the solution. 

\begin{figure}[h!]
\centering
\includegraphics[width=0.8\textwidth]{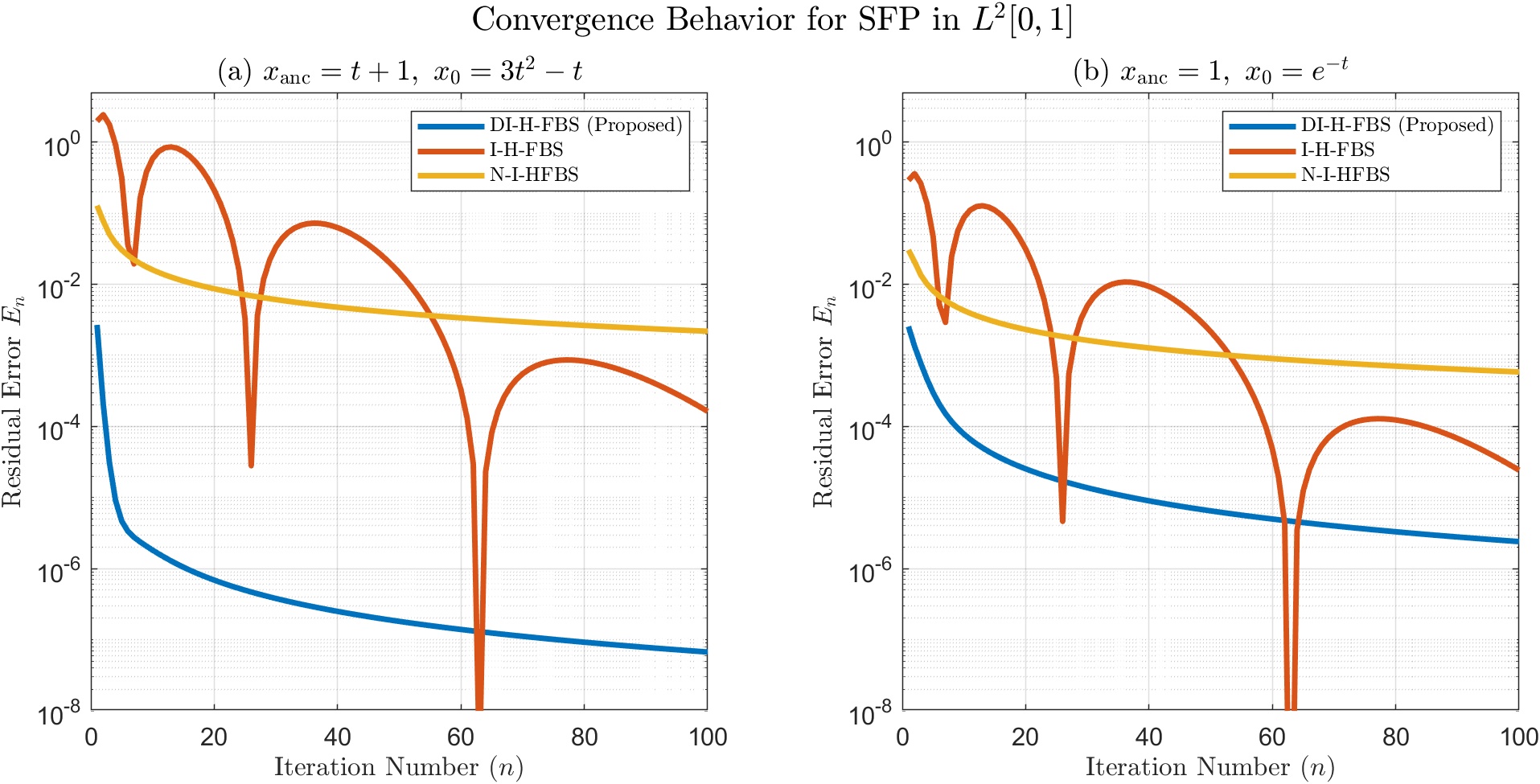}
\caption{Evolution of the logarithmic residual error $E_n$ for the Split Feasibility Problem in $L_2[0,1]$.}
\label{fig:SFP_convergence}
\end{figure}

\noindent \textbf{Experiment II: Regularized Inverse Problems in Signal and Image Processing}

Consider the general regularized inverse problem formulated as:
\begin{equation}\label{eq:composite_opt}
\min_{x \in \mathbb{R}^N} \left\{ \frac{1}{2}\| \mathcal{A}x - b \|_2^2 + \mu \|x\|_1 \right\},
\end{equation}
where $\mathcal{A}: \mathbb{R}^N \to \mathbb{R}^M$ is a linear sensing or blurring operator, $b \in \mathbb{R}^M$ represents the degraded observations corrupted by additive noise, and $\mu > 0$ is regularization parameter controlling the sparsity of the solution. 

By Fermat's rule, problem \eqref{eq:composite_opt} is equivalent to finding a zero of the sum of two operators, i.e., $0 \in (A+B)x^*$, where:
\begin{equation*}
B(x) := \nabla \left( \frac{1}{2}\| \mathcal{A}x - b \|_2^2 \right) = \mathcal{A}^T(\mathcal{A}x - b), \quad \text{and} \quad A(x) := \partial (\mu \|x\|_1).
\end{equation*}
It is well-established that the subdifferential operator $A$ is maximally monotone. Furthermore, since the data-fidelity term is a convex quadratic function, its gradient $B$ is Lipschitz continuous with the constant $L = \|\mathcal{A}^T \mathcal{A}\|$. According to the Baillon-Haddad theorem, $B$ is inherently $\nu$-cocoercive with $\nu = \frac{1}{L}$.

To overall benchmark our method against state-of-the-art inertial splitting algorithms---namely, FISTA \cite{beck2009fast}, FISTA-CD \cite{chambolle2015convergence}, cGIGPM \cite{wu2019general}, and DIPFBSM \cite{jolaoso2023double}---we present two specific application scenarios. In these experiments, the rule $\phi_n=(n-1)/(n+2)$ used in DI-H-FBS follows the standard Chambolle--Dossal/FISTA-CD inertial schedule.

\noindent \textbf{Example 1: LASSO Signal Reconstruction}

In this example, we apply the algorithms to the Least Absolute Shrinkage and Selection Operator (LASSO) problem within the context of compressed sensing. The goal is to accurately reconstruct a sparse signal from under-determined measurements ($M \ll N$). 

For the benchmark protocol, we set the dimensions to $N = 4096$ and $M = 1024$. The sensing matrix $\mathcal{A}$ is generated with entries drawn from a normal distribution $\mathcal{N}(0, \frac{1}{M})$ to ensure the Restricted Isometry Property (RIP). The ground-truth sparse signal $x^*$ is synthetically constructed to contain $160$ non-zero entries uniformly distributed over $[-2, 2]$. The observation vector $b$ is contaminated with Gaussian noise, calibrated to yield a Signal-to-Noise Ratio (SNR) of $40$ dB. The regularization parameter is set to $\mu = 0.02$. 

Furthermore, DI-H-FBS employs the discrete parameters \(\phi_n = (n-1)/(n+2)\) for \(n\ge1\), \(\psi_n = 0.5\), \(\theta_n = 0.995\), \(\delta_n = 10^{-4}/n^{13}\), and \(\gamma_n = 0.35/L\). For the initialization formula, we take \(\delta_0=10^{-4}\), \(\theta_0=0.995\), \(\phi_0=0\), \(\psi_0=0.5\), and \(\gamma_0=0.35/L\). FISTA is configured with \(\gamma = 0.5/L\). FISTA-CD uses \(\gamma = 0.5/L\) and \(\beta_n = (n-1)/(n+2)\) (i.e., \(a=3\)). For cGIGPM, we set \(\alpha = 0.35\), \(\beta = 0.50\), \(\lambda_0 = 0.03\), \(\varepsilon = 10^{-6}\), and \(\gamma_n = \min\{\lambda_0,\, m/L\}\) with \(m = \min\{\alpha/\beta,\,(2-2\alpha-\varepsilon)/(1-\beta)\}\). DIPFBSM is implemented with \(\gamma = 1.0/L\), \(\theta = 0.49\), and \(\delta = -0.5\). The anchor point is fixed as \(x_{\mathrm{anc}} = \mathbf{0}\), and \(x_0=\mathbf{0}\) is used as the starting point; for DI-H-FBS, \(x_1\) is generated by Algorithm~\ref{alg:DIH_FBS}. Let $x_n$ denote the $n$-th iterate generated by each algorithm. The performance is evaluated using the Mean Squared Error (MSE) metric, defined as $\mathrm{MSE} = \frac{1}{N}\|x_n - x^*\|_2^2$.

As shown in Figure~\ref{fig:LASSO_convergence}, DI-H-FBS achieves the lowest MSE (\(3.25\times10^{-5}\)) among all methods, with a CPU time comparable to the fastest competitors. FISTA and FISTA-CD yield close but slightly worse accuracy, while cGIGPM and DIPFBSM exhibit significantly larger errors, highlighting the efficiency and stability of the proposed algorithm.

\begin{figure}[h!]
\centering
\includegraphics[width=\textwidth]{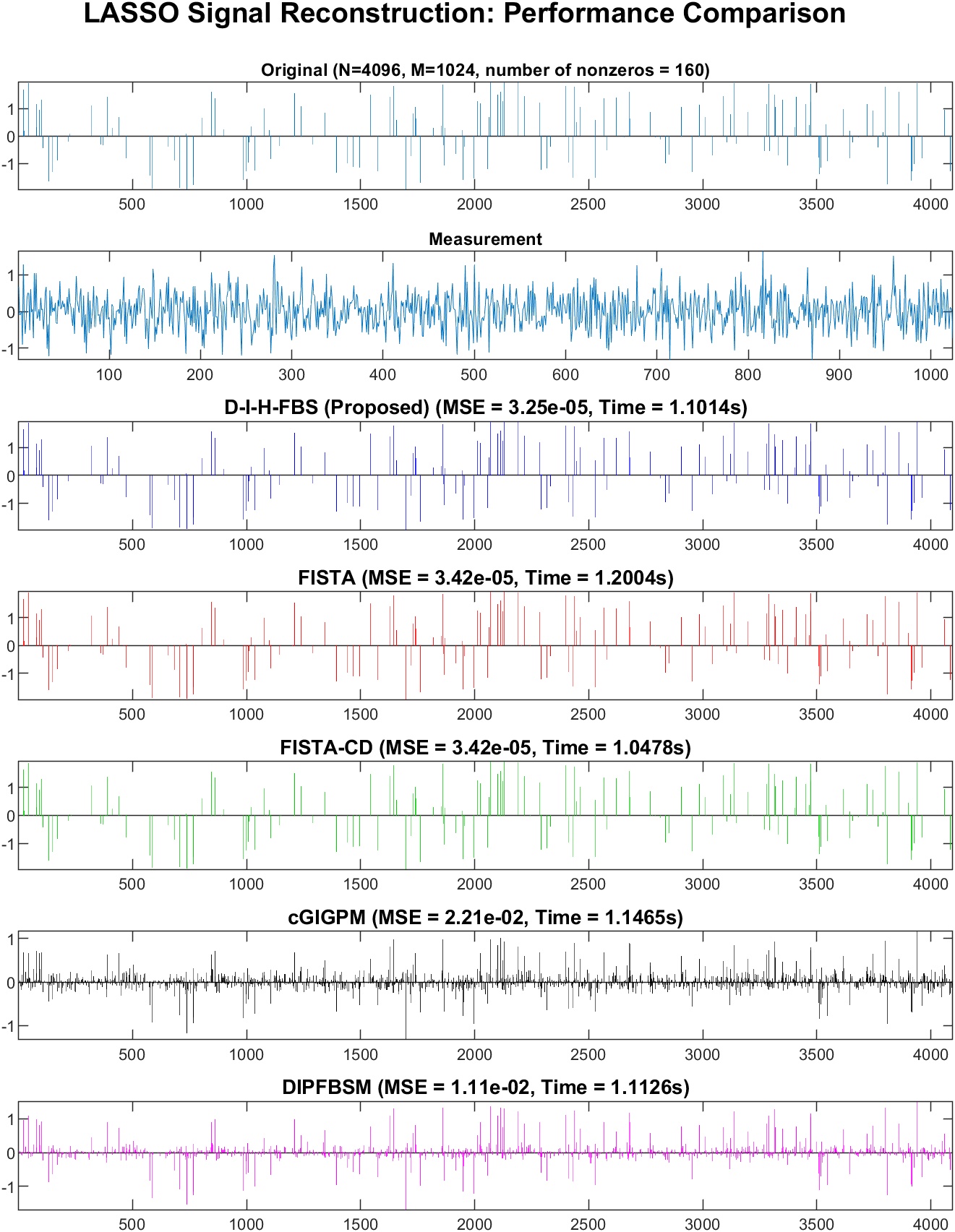}
\caption{Convergence of the mean squared error (MSE) for the LASSO signal reconstruction problem.}
\label{fig:LASSO_convergence}
\end{figure}

\noindent \textbf{Example 2: Image Deblurring}

In the second scenario, we consider the classical image deblurring problem. We take the standard test image (a $512\times512$ scenery image) and convert it to grayscale, normalizing pixel values to $[0,1]$. The degradation model consists of a Gaussian blur kernel of size $7\times7$ with standard deviation $\sigma=4$, followed by additive Gaussian noise with $\mathrm{SNR}=40$ dB. The linear operator $\mathcal{A}$ corresponds to the 2D convolution with the blur kernel under circular boundary conditions.

Furthermore, the regularization parameter is fixed to $\mu = 8\times10^{-4}$. For DI-H-FBS, we set $\phi_n = (n-1)/(n+2)$ for \(n\ge1\), $\psi_n = 0.3$, $\theta_n = 0.98$, $\delta_n = 0.01/n^{5.2}$, and $\gamma_n = 0.05/L$. For the initialization formula, we take \(\delta_0=0.01\), \(\theta_0=0.98\), \(\phi_0=0\), \(\psi_0=0.3\), and \(\gamma_0=0.05/L\). FISTA-CD employs $\gamma = 0.1/L$ and the same inertial rule as above. The cGIGPM parameters are identical to those in Example~1. DIPFBSM uses $\gamma = 1.31/L$, $\theta = 0.6$, $\delta = -0.5$. All algorithms are initialized with the blurred and noisy observation $b$, and the anchor point is set to $b$ for the proposed method; for DI-H-FBS, the first iterate is generated by Algorithm~\ref{alg:DIH_FBS}. We assess the quality of the restored images by means of the Peak Signal-to-Noise Ratio (PSNR) and the Structural Similarity Index (SSIM), defined respectively as
\[
\mathrm{PSNR}(x_r, x) = 10 \log_{10} \left( \frac{1}{\frac{1}{N} \| x_r - x \|_2^2 } \right), \qquad
\mathrm{SSIM}(x_r, x) = \frac{ (2\mu_x \mu_{x_r} + C_1)(2\sigma_{xx_r} + C_2) }{ (\mu_x^2 + \mu_{x_r}^2 + C_1)(\sigma_x^2 + \sigma_{x_r}^2 + C_2) },
\]
where $N$ is the number of pixels, $\mu_x$ and $\sigma_x^2$ denote the mean and variance of the original image $x$, $\sigma_{xx_r}$ is the covariance between $x$ and the restored image $x_r$, and the constants are $C_1 = (0.01L)^2$, $C_2 = (0.03L)^2$ with $L = 1$ for images normalized to $[0,1]$.

As shown in Figures~\ref{fig:deblur_metrics} and \ref{fig:deblur_visual}, DI-H-FBS achieves the best final restoration quality among all compared methods, with PSNR \(27.96\) dB and SSIM \(0.750\). The metric curves also show that the proposed method remains stable and competitive throughout the iteration process.

\begin{figure}[h!]
\centering
\includegraphics[width=0.8\textwidth]{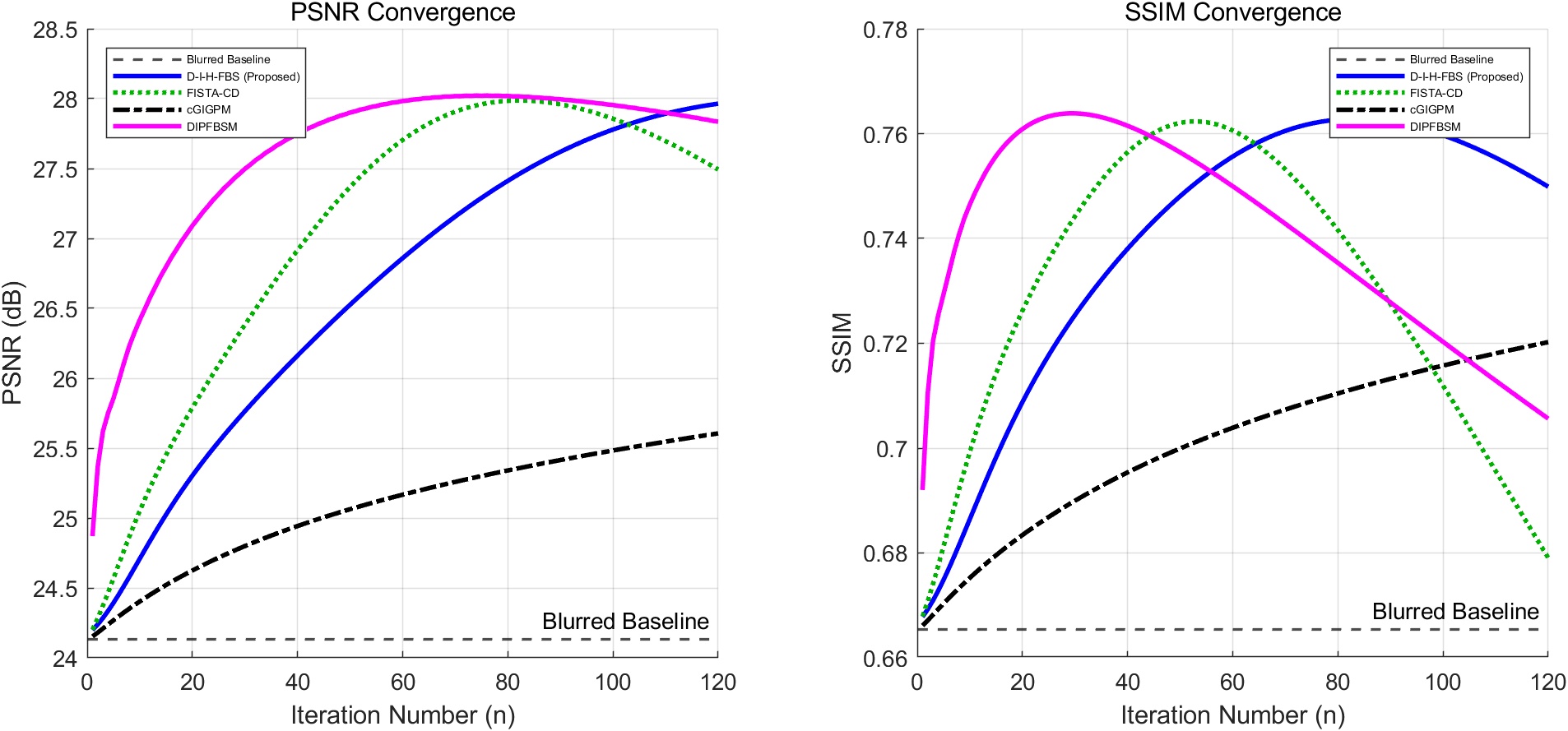}
\caption{Convergence of PSNR (left) and SSIM (right) for the image deblurring problem.}
\label{fig:deblur_metrics}
\end{figure}

\begin{figure}[h!]
\centering
\includegraphics[width=\textwidth]{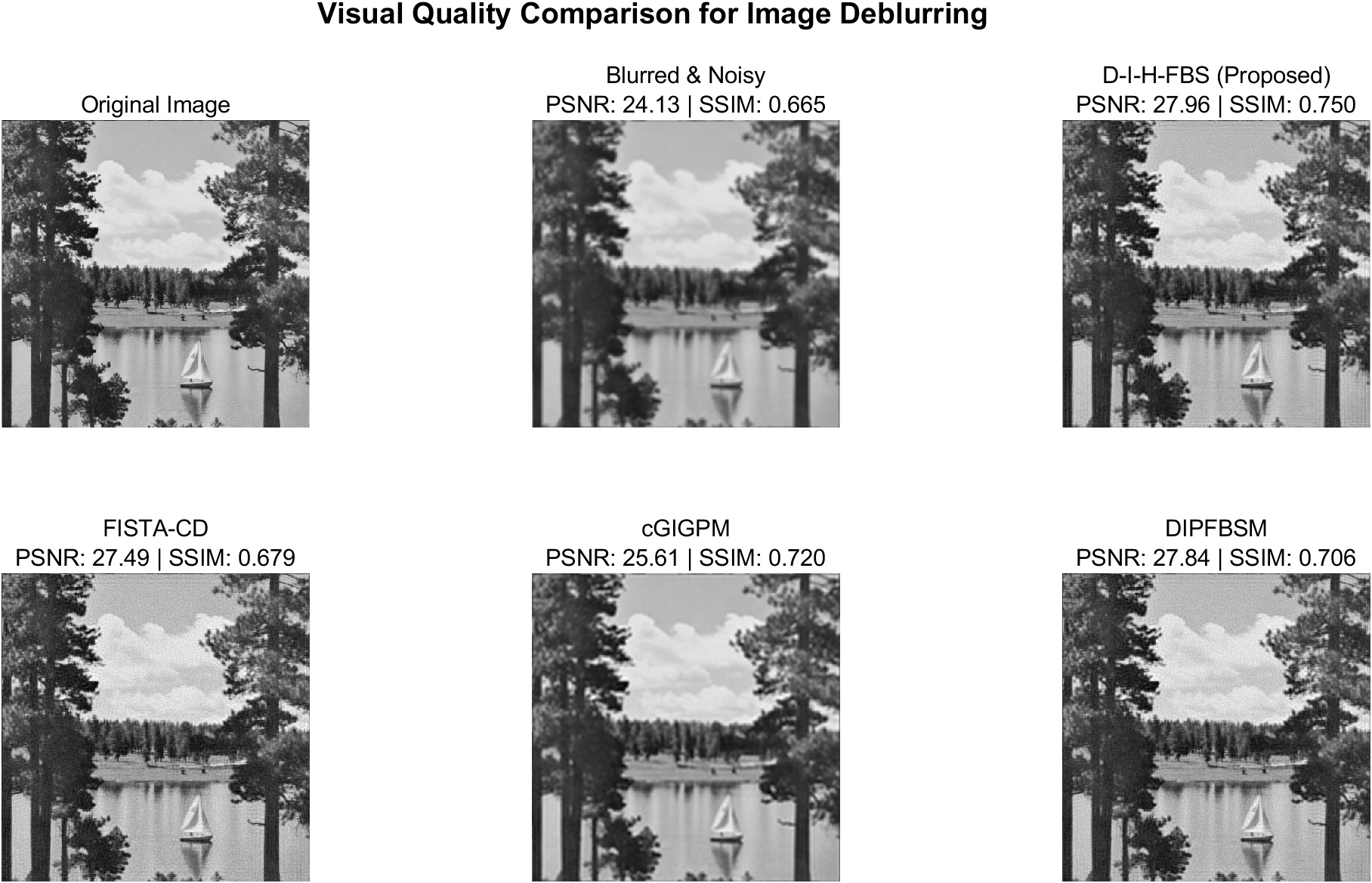}
\caption{Visual quality comparison of the restored images. From left to right and top to bottom: original, blurred \& noisy, DI‑H‑FBS (proposed), FISTA‑CD, cGIGPM, DIPFBSM.}
\label{fig:deblur_visual}
\end{figure}

\section{Conclusion}
\label{sec:conclusion}

This paper has extended the second-order dynamical systems approach to the setting of maximal $\rho$-comonotone operators, proving that the velocity and the forward-backward residual converge at the accelerated rates $o(1/t)$ and $o(1/t^{2})$, respectively, and that the trajectories converge weakly to $\operatorname{zer}(A+B)$. The corresponding discrete algorithm, the Double Inertial Halpern Forward-Backward Splitting method, was derived by temporal discretization and was shown numerically to perform favorably on split feasibility, sparse recovery, and deblurring problems. Future work includes the study of strong convergence and extensions to cases where both operators are governed by generalized monotonicity assumptions.

\section*{Acknowledgments}

The authors express gratitude to the anonymous reviewers and the editor for their valuable feedback and suggestions, which significantly enhanced the earlier draft of this manuscript. This work was supported by the Team Building Project for Graduate Tutors in Chongqing (Grant No. yds223010), the Chongqing Natural Science Foundation (Grant No. CSTB2025NSCQ-GPX0814), and the Scientific Research Start-up Fund Project of Chongqing Technology and Business University (Grant No. 2656003).

\section*{CRediT Authorship Contribution Statement}

\textbf{Yan Tang}: Conceptualization, Methodology, Formal analysis, Supervision, Project administration, Funding acquisition. \textbf{Jun Dong}: Software, Validation, Visualization, Investigation, Data curation, Writing -- original draft.

\section*{Declaration of Competing Interest}

The authors declare that they have no known competing financial interests or personal relationships that could have appeared to influence the work reported in this paper.

\end{document}